\definecolor{Maroon}{rgb}{0.6 0 0}
\definecolor{Prussian}{rgb}{0.05 0 0.6}
\definecolor{Emerald}{rgb}{0 0.5 0.1}
\newtheoremstyle{mytheorem}%
{10.0pt plus 2.0pt minus 2.0pt} % Space above
{10.0pt plus 2.0pt minus 2.0pt} % Space below
{\itshape} % Body font
{} % Indent amount
{\sc} % Theorem head font
{.} % Punctuation after theorem head
{ } % Space after theorem head
{} % Theorem head spec (can be left empty, meaning `normal')
\newtheoremstyle{mydefinition}%
{10.0pt plus 2.0pt minus 2.0pt} % Space above
{10.0pt plus 2.0pt minus 2.0pt} % Space below
{} % Body font
{} % Indent amount
{\sc} % Theorem head font
{.} % Punctuation after theorem head
{ } % Space after theorem head
{} % Theorem head spec (can be left empty, meaning `normal')
\newtheoremstyle{myexample}%
{10.0pt plus 2.0pt minus 2.0pt} % Space above
{10.0pt plus 2.0pt minus 2.0pt} % Space below
{\small} % Body font
{} % Indent amount
{\sc} % Theorem head font
{.} % Punctuation after theorem head
{ } % Space after theorem head
{} % Theorem head spec (can be left empty, meaning `normal')
\newtheoremstyle{myremark}%
{10.0pt plus 2.0pt minus 2.0pt} % Space above
{10.0pt plus 2.0pt minus 2.0pt} % Space below
{} % Body font
{} % Indent amount
{\itshape} % Theorem head font
{.} % Punctuation after theorem head
{ } % Space after theorem head
{} % Theorem head spec (can be left empty, meaning `normal')
\theoremstyle{mytheorem}
\newtheorem{theorem}{Theorem}[section]
\newtheorem{lemma}[theorem]{Lemma}
\newtheorem{corollary}[theorem]{Corollary}
\newtheorem{proposition}[theorem]{Proposition} 
\theoremstyle{myremark}
\newtheorem{remark}[theorem]{Remark}
\newtheorem{convention}[theorem]{Convention}
\newtheorem{notation}[theorem]{Notation}
\theoremstyle{mydefinition}
\newtheorem{definition}[theorem]{Definition}
\theoremstyle{myexample}
\newtheorem{example}[theorem]{Example}
\newtheorem{counterexample}[theorem]{Counterexample}
\newtheoremstyle{myzusatz}
 {10.0pt plus 2.0pt minus 2.0pt} % Space above
{10.0pt plus 2.0pt minus 2.0pt} % Space below
{\itshape} % Body font
{} % Indent amount
{\sc} % Theorem head font
{.} % Punctuation after theorem head
{ } % Space after theorem head
{\thmname{#1}\thmnumber{ #2}\thmnote{ #3}}
\theoremstyle{myzusatz}
\newtheorem{zusatz}[theorem]{Addendum}
\definecolor{gray1}{gray}{0.8}
\definecolor{gray2}{gray}{0.6}
\definecolor{gray3}{gray}{0.4}
\definecolor{gray4}{gray}{0.2}
\newcommand{\id}{\mathrm{id}}
\newcommand{\Cc}{\mathscr{C}}
\newcommand{\Gg}{\mathscr{G}}
\newcommand{\Ss}{\mathscr{S}}
\newcommand{\Hh}{\mathscr{H}}
\newcommand{\Kk}{\mathscr{K}}
\newcommand{\Mu}{\mathrm{M}}
\newcommand{\inv}[1]{{#1}^{-1}}
\newcommand{\ot}{\otimes}
\newcommand{\set}[1]{\lbrace #1\rbrace}
\def\Quiv{{\mathcal{Q}}}
\def\source{\mathfrak{s}}
\def\target{\mathfrak{t}}
\DeclareMathOperator{\Hol}{\mathrm{Hol}}
\DeclareMathOperator{\Aut}{\mathrm{Aut}}
\DeclareMathOperator{\Obj}{\mathrm{Obj}}
\DeclareMathOperator{\mmaps}{\begin{smallmatrix}{\mapsto}\\ {\mapsfrom}\end{smallmatrix}}
\newcommand{\Z}{\mathbb{Z}}
\newcommand{\blank}{\raisebox{-2pt}{\text{---}}}
\newcommand{\One}{\mathbbm{1}}
\DeclareMathOperator{\im}{\mathrm{im}}
\begin{document}
\hyphenation{qua-si-grou-po-id}
\hyphenation{qua-si-grou-po-ids}
\hyphenation{grou-po-id}
\hyphenation{grou-po-ids}
\hyphenation{oid-i-fi-ca-tion}
\hyphenation{a-quo-id}
\hyphenation{a-quo-ids}

\def\presub#1#2#3%
  {\mathop{}%
   \mathopen{\vphantom{#2}}_{#1}%
   \kern-\scriptspace%
   {#2}_{#3}}

\newcommand{\fibre}[2]{\presub{#1}{\times}{#2}}

\newcommand{\twomapsright}[2]{\,\underset{#2}{\overset{#1}{\rightrightarrows}} \, }

\title{On dynamical skew braces and skew bracoids}
\author{Davide Ferri}
\begin{abstract}\noindent Dynamical skew braces are known to produce solutions to the quiver-theoretic Yang--Baxter equation. Under a technical hypothesis, we prove that these solutions are braided groupoids (and hence skew bracoids in the sense of Sheng, Tang, and Zhu). Conversely, every connected braided groupoid can be \emph{parallelised}, making it isomorphic to a dynamical skew brace. We study the combinatorics of these objects, depending on some strings of integer invariants.
\end{abstract}
\address{%
\parbox[b]{0.9\linewidth}{University of Turin, Department of Mathematics ``G.\@ Peano'',\\ via
 Carlo Alberto 10, 10123 Torino, Italy.\\
 Vrije Universiteit Brussel, Department of Mathematics and Data Science,\\ Pleinlaan 2, 1050, Brussels, Belgium.}}
 \email{d.ferri@unito.it, Davide.Ferri@vub.be}
\keywords{Yang--Baxter Equation, Braided Groupoids, Dynamical Yang--Baxter Equation, Dynamical Skew Braces, Skew Bracoids}
\subjclass[2010]{Primary 16T25; Secondary 81R50}
\maketitle
\tableofcontents 

\section{Introduction}\noindent The Yang--Baxter equation (\textsc{ybe}) arised in the theory of quantum integrable systems \cite{yang} and statistical mechanics \cite{BAXTER1973}, and is now studied in both physics and mathematics, being transversal to the theories of quantum groups, low-dimension topology, groups, rings, Garside structures, and several other fields. 

A notion of \textsc{ybe} can be defined in every monoidal category $(\Cc,\ot,\One)$: a solution to the \textsc{ybe} in $\Cc$ is an object $X$ together with a morphism $\sigma\colon X^{\ot 2}\to X^{\ot 2}$ satisfying a braid relation. The \textsc{ybe} in the category of vector spaces $\mathsf{Vec}_\Bbbk$ is crucial in quantum integrability and statistical mechanics, and in the representation theory of braid groups. The \textsc{ybe} in $\mathsf{Set}$ has been proposed by Drinfeld \cite{drinfeld2006some} as a tool to produce solutions in $\mathsf{Vec}_\Bbbk$ by linearisation and deformation; but the topic was found interesting in its own regard, due to its overarching applications in mathematics, such as in radical rings, PI algebras, combinatorial knot theory, cycle sets, Garside groups---just as a small showcase.

Motivated by the theory of groupoids and of weak quasitriangular Hopf algebras \cite{ AguiarAndruskiewitsch, andruskiewitsch2005quiver}, the \textsc{ybe} in the category of quivers $\mathsf{Quiv}_\Lambda$ was proposed by Andruskiewitsch \cite{andruskiewitsch2005quiver} and Matsumoto and Shimizu \cite{matsumotoshimizu}. This is called the \emph{quiver-theoretic Yang--Baxter equation}, and constitutes the main topic of this paper. The quiver-theoretic \textsc{ybe} appears moreover in the theory of post-Lie algebroids \cite{sheng2024postgroupoidsquivertheoreticalsolutionsyangbaxter}, and has connections to geometry, Garside categories \cite{dehornoy2015foundations}, and the theory of heaps. 

Most notably, the quiver-theoretic \textsc{ybe} envelops, as a subcase, the set-theoretic \emph{dynamical Yang--Baxter equation} (\textsc{dybe}); see \cite{matsumotoshimizu}. The \textsc{dybe} is an equation arising in mathematical physics and conformal field theory (see \cite{felder1995conformal, gervais1984novel}), systematised by Etingof and Schiffmann \cite{etingof2001lectures}, and whose set-theoretic version was defined by Shibukawa \cite{shibu}.

It is natural to consider the theory of the set-theoretic \textsc{ybe}, and see what properties or definitions can be translated to quivers. This has been done in many places.

Solutions in $\mathsf{Set}$ have been studied for a long time, and the following picture emerged: \textit{i}) there are structures called \emph{braided groups}, whose explicit definition appears in Lu, Yan, and Zhu \cite{LYZ} after a categorical notion with the same name was given by Majid \cite{Majid_braidedgroups, MJbraided}; \textit{ii}) braided groups are equivalent to \emph{matched pairs of actions on groups} \cite{takeuchi1981matched}, which produce solutions to the set-theoretic \textsc{ybe}, see \cite{LYZ}; \textit{iii}) not all solutions come from braided groups, but every solution is ``enveloped'' by a braided group, whose underlying group is the \emph{structure group of the solution} defined by Etingof, Schedler, and Soloviev \cite{etingof1999set}; \textit{iv}) braided groups are equivalent to \emph{skew braces}, defined by Guarnieri and Vendramin \cite{guarnieri2017skew} generalising Rump \cite{rump2007braces}; and finally, \textit{v}) braided groups are also equivalent to \emph{post-groups}, introduced by Bai, Guo, Sheng, and Tang \cite{postgroups}, that are relevant objects in the theory of post-Lie algebras. 

The above picture has been reconstructed for the quiver-theoretic \textsc{ybe}: \textit{i}) \emph{braided groupoids} have been defined by Andruskiewitsch \cite{andruskiewitsch2005quiver}; \textit{ii}) they are equivalent to matched pairs of groupoids \cite{andruskiewitsch2005quiver,mackenzie}, and they provide indeed solutions to the quiver-theoretic Yang--Baxter equation; \textit{iii}) not all solutions are of this kind, but they are all ``enveloped'' by a braided groupoid, defined on their \emph{structure groupoid} \cite{andruskiewitsch2005quiver}; \textit{iv}) a quiver-theoretic notion of skew brace, which is equivalent to braided groupoids, has been recently introduced by Sheng, Tang, and Zhu \cite{sheng2024postgroupoidsquivertheoreticalsolutionsyangbaxter} with the name \emph{skew bracoid} (notice that the name was already used for a different structure, see \cite{martin2024skew}); and finally \textit{v}) Sheng, Tang, and Zhu introduced \emph{post-groupoids} \cite{sheng2024postgroupoidsquivertheoreticalsolutionsyangbaxter}, that are equivalent to braided groupoids, and are relevant in the theory of post-Lie algebroids. 

From the viewpoint of the \textsc{dybe}, a notion of \emph{dynamical skew brace} has been defined by Anai \cite{anai2021thesis} generalising Matsumoto \cite{matsu}. Dynamical skew braces produce solutions to the \textsc{dybe}---and hence to the quiver-theoretic \textsc{ybe}---, and they are very rich in combinatorics \cite{matsumoto2011combinatorialaspects}.

In \S\ref{sec:qtsb}, we give the definition of \emph{quiver-theoretic skew braces} (Definition \ref{def:qtsb}), which includes skew bracoids as a subcase. We prove in Addendum \ref{zusatz} that dynamical skew braces give rise to quiver-theoretic skew braces. In particular, \emph{zero-symmetric} dynamical skew braces produce skew bracoids. A notion of \textit{post-semiloopoid} is also defined, and we prove that quiver-theoretic skew braces are equivalent to post-semiloopoids.

In the remainder of \S\ref{sec:dbraces}, we investigate the combinatorics of dynamical skew braces. We observe that every finite group $A$ is associated with a canonical zero-symmetric dynamical skew brace (and thus with a canonical skew bracoid). The geometry of the associated quiver depends on a string of integer invariants $N^A_s$ (Remark \ref{maximal_symmetric_dbrace}), that remain hard to compute: $N^A_1$ in particular is the number of skew braces on $A$. We prove a relation for these invariants, and compute some small examples. 

Finally, in \S\ref{sec:aredbraces} we prove  our main result (Theorem \ref{thm:aredbraces}): all connected skew bracoids come from the connected zero-symmetric dynamical skew braces.

Theorem \ref{thm:aredbraces} is consistent with the fact that skew bracoids lie in the essential image of the Matsumoto--Shimizu functor $\Quiv$ (see Lemma \ref{essential_image_of_Q}). Moreover, it also has a geometric interpretation: to retrieve a dynamical skew brace from a skew bracoid, is to ``parallelise'' a discrete manifold in a way that is compatible with its ``fundamental groupoid'' multiplication.

\section{Preliminaries}
\noindent For a category $\Cc$, we denote by $\Obj(\Cc)$ its class of objects and, for $X,Y\in \Obj(\Cc)$, we denote by $\Cc(X,Y)$ the class of morphisms $X\to Y$. The composition of arrows $f\colon X\to Y$ and $g\colon Y\to Z$ is denoted by $g\circ f$, or simply by $gf$.

We shall indicate with $(\Cc, \ot ,\One)$ a monoidal category, with monoidal product $\ot$ and monoidal unit $\One$. We shall consistently neglect the associativity and unit constraints, and the MacLane Coherence Theorem ensures us that this negligence is mostly safe.
\subsection{Quivers} A quiver can be described in multiple equivalent ways: as a directed multigraph with loops; as a small precategory; or as the datum of arrows and vertices, and a \emph{source} and \emph{target} map. The latter will be our preferred approach.
\begin{definition}
    A \emph{quiver} $Q\twomapsright{\source}{\target}\Lambda$ is the datum of two nonempty sets $Q$ and $\Lambda$, and of two maps $\source,\target \colon Q\to \Lambda$. We call $Q$ the set of \emph{arrows}, $\Lambda$ the set of \emph{vertices}, and $\source$ and $\target$ the \emph{source} and \emph{target} map respectively.

    With an abuse of terminology, we shall often say that $Q$ is \emph{a quiver over $\Lambda$}. The source and target maps will be omitted when they are understood from the context, or subscripts such as $\source_Q,\target_Q$ will be added for clarity when needed.
\end{definition}
\begin{notation}
    We import, for quivers, the same notation used for categories: we denote by $Q(\lambda,\mu)$ the set of arrows with source $\lambda$ and target $\mu$, and by $Q(\lambda,\Lambda)$, resp.\@ $Q(\Lambda,\lambda)$, the set of all arrows with source, resp.\@ target, $\lambda$. If $Q$ is a quiver over $\Lambda$, we shall write $\Obj(Q) = \Lambda$.
\end{notation}
\begin{convention}\label{conv:no-isolated-vertices}
    We shall always assume that $\Lambda = \im(\source)\cup \im(\target)$.
\end{convention}
\begin{definition}
A \emph{bundle of loops} is a quiver of the form $Q\twomapsright{\source}{\source}\Lambda$. We shall use the terminology ``bundle of groups'', ``bundle of skew braces'', etc., for a bundle of loops $Q$ where each set $Q(\lambda, \lambda)$ is a group, a skew braces, etc. 

A \emph{loop bundle} for a quiver $Q$ over $\Lambda$ is a map $p\colon Q\to \Lambda$. The \emph{fibre} of $p$ at $\lambda\in\Lambda$ is the set $p^{-1}(\lambda)$. The datum of a loop bundle can always be read as a quiver with $\source = \target = p$, and in this case the quiver is a bundle of loops.

A \emph{group bundle} is a loop bundle $p\colon \Gg\to \Lambda$ such that the corresponding bundle of loops $\Gg\twomapsright{p}{p}\Lambda$ has the structure of a bundle of groups where all groups $\Gg_\lambda = \Gg(\lambda,\lambda)$ are pairwise isomorphic.
\end{definition}
If $Q$ is a quiver over $\Lambda$, a \emph{subquiver} of $Q$ is a quiver $R$ over $\Lambda'$, with $\Lambda'\subseteq \Lambda$ and $R\subseteq Q$. We say that $R$ is \emph{full} if $R(\lambda,\mu) = Q(\lambda,\mu)$ for all $\lambda,\mu\in \Lambda'$, and that $R$ is \emph{wide} if $\Lambda'= \Lambda$. 
\begin{definition}\label{def:morphisms_of_quivers}
    Let $Q$ be a quiver over $\Lambda$, and $R$ be a quiver over $\Mu$. A \emph{weak morphism of quivers} $f\colon Q\to R$ is a pair $f = (f^1, f^0)$, where $f^0\colon \Lambda\to \Mu$  is a set-theoretic map, and $f_ 1\colon Q\to R$ is a set-theoretic map satisfying
    \[ \source_R(f^1(x)) = f^0(\source_Q(x)),\quad  \target_R(f^1(x)) = f^0(\target_Q(x)),\]
    for all $x\in Q$.

    Now let $\Lambda = \Mu$. A \emph{morphism of quivers over $\Lambda$} is defined as a weak morphism $f = (f^1, f^0)\colon Q\to R$ such that $f^0 = \id_\Lambda$.

    The cateory of quivers over $\Lambda$ endowed with morphisms over $\Lambda$ is denoted by $\mathsf{Quiv}_\Lambda$.
\end{definition}
\begin{convention}
    For us, ``a morphism of quivers'' is short for ``a morphism over $\Lambda$''. If $f = (f^1, \id_\Lambda)$ is a morphism over $\Lambda$, we just identify $f$ with $f^1$.
\end{convention}
Let $Q,R$ be quivers over $\Lambda$, and $p\colon Q\to \Lambda$, $q\colon R\to \Lambda$ be two maps. We denote by $Q\fibre{p}{q} R$ the quiver over $\Lambda$ with arrows
\[ \set{(x,y)\in Q\times R\mid p(x) = q(y)}, \]
and with source and target maps $\source_{Q\fibre{p}{q} R}(x, y):= \source_Q(x)$, and $\target_{Q\fibre{p}{q} R}(x, y):= \target_R(y)$ respectively; see e.g.\@ \cite[\S1.1]{andruskiewitsch2005quiver}.

The category $\mathsf{Quiv}_\Lambda$ is monoidal (see \cite{matsumotoshimizu}), with monoidal product $Q\ot R:= Q\fibre{\target_Q}{\source_R}R$. A pair $(x,y)\in Q\ot R$ is said to be a pair of ``composable arrows'', and is denoted by $x\ot y$. The monoidal unit is denoted by $\One_\Lambda$, and is the bundle on $\Lambda$ with exactly one loop on each vertex. 
\begin{notation}
    The quiver of \emph{paths of length $n$} on $Q$ can be identified with the tensor power $Q^{\ot n}$. The path formed by a sequence of arrows $x_1,\dots, x_n$ will be denoted by $x_1\ot \dots \ot x_n$, accordingly.
\end{notation}
\begin{definition}
    A quiver $Q$ is called \emph{Schurian} if $|Q(\lambda,\mu)|\le 1$ for all vertices $\lambda,\mu$. We adapt this nomenclature from Van Oystaeyen and Zhang \cite{VanOystaeyenQuivers}.
\end{definition}
\begin{definition}\label{complete_quiver_of_degree_n}
Let $\Lambda$ be a nonempty set, $d$ any cardinal (possibly infinite). A \emph{complete quiver of degree $d$} over $\Lambda$ is a quiver $Q$ over $\Lambda$ such that, for all $\lambda,\mu\in \Lambda$ (not necessarily distinct), the set $Q(\lambda,\mu)$ has cardinality $d$.
\end{definition}
For a quiver $Q$, let $Q^{rev}$ be the quiver with reversed arrows, and let the \textit{double} $\mathrm{D}Q$ be the quiver with same vertices as $Q$, and arrows formed by the disjoint union of the arrows of $Q$ and of $Q^{rev}$. A \textit{connected component} of a quiver $Q$ will be, for us, a subquiver $R$ of $Q$ that is maximal under the property that any two of its vertices are connected by a path in $\mathrm{D}R$.
\begin{definition}\label{homogeneous_quiver_of_weight_n}
We say that a quiver is \emph{homogeneous of weight $n$} if all its connected components $\Kk_i$ are complete quivers of degrees $d_i$ respectively, and the product $d_i |\Obj(\Kk_i)| = n$ is constant.
\end{definition}
Notice that, fixed a weight $n$, the shape of a homogeneous quiver of weight $n$ is uniquely determined by a string of integers $\set{N_s}_s$, where $N_s$ is the number of connected components $\Kk$ with $|\Obj(\Kk)|= s$, and $1\le s\le n$ is an integer.
\subsection{Groupoids and left semiloopoids}
\begin{definition}
    A \emph{groupoid} over $\Lambda$ is the datum of a quiver $\Gg$ over $\Lambda$; a morphism of quivers $m_\bullet\colon \Gg\ot \Gg\to \Gg$, thus defining a binary operation $x\ot y\mapsto x\bullet y$; and a family of loops $\set{\mathbf{1}_\lambda}_{\lambda\in\Lambda}$, where $\mathbf{1}_\lambda$ is a loop on $\lambda$; such that
    \begin{enumerate}
        \item the operation $\bullet$ is associative;
        \item for all $x\in\Gg$, one has $x\bullet \mathbf{1}_{\target(x)} = \mathbf{1}_{\source(x)}\bullet x = x$;
        \item for all $x\in \Gg(\lambda,\mu)$, there exists $x^{-\bullet}\in \Gg(\mu,\lambda)$ such that $x\bullet x^{-\bullet} = \mathbf{1}_{\source(x)}$ and $x^{-\bullet}\bullet x = \mathbf{1}_{\target(x)}$.
    \end{enumerate}
    A \emph{weak morphism of groupoids} $f\colon \Gg\to \Gg'$ is a weak morphism $f= (f^1, f^0)\colon \Gg\to \Gg'$ satisfying $f^1(x\bullet_\Gg y) = f^1(x)\bullet_{\Gg'}f^1(y)$ and $f^1(\mathbf{1}_\lambda) = \mathbf{1}_{f^0(\lambda)}$. A \emph{morphism of groupoids over $\Lambda$} is defined analogously.
\end{definition}
Notice that the \emph{unit} $\mathbf{1}_\lambda$ on the vertex $\lambda$ and the \emph{inverse} $x^{-\bullet}$ of $x$ are necessarily unique. A group can be interpreted as a groupoid over a singleton.

It is immediate that the underlying quiver of a groupoid must be a disjoint union of connected components $\Kk_i$, where each component is a complete quiver of some degree $d_i$.

The following definition is well known\footnote{The definition of quasigroups is quite old, and it actually captures the idea of \textit{latin squares}, that were already studied independently by Choi Seok-Jeong and by Leonhard Euler in the 18\textsuperscript{th} Century \cite{colbourn2006handbook}. Left quasigroups also appear in multiple places: we here refer to Stanovský's thesis \cite{stanovsky2004left}, but the reader should be wary that Stanovský uses the term ``groupoid'' with a different meaning than ours.}.
\begin{definition}\label{left_quasigroup}
    A \emph{left quasigroup} $(A,\bullet)$ is a set $A$ with a binary operation $\bullet$ such that the \emph{left division} is well defined, i.e., such that the left-multiplication $x\bullet \blank$ is bijective for all $x\in A$. We denote by $x\backslash y$ the unique element such that $x\bullet (x\backslash y) = y$. We say that $A$ is a \emph{left associative quasigroup} if moreover $\bullet$ is associative.
\end{definition}
The operation of taking a \emph{total} algebraic structure, and giving its equivalent with \emph{partially defined} operations (for instance groups become groupoids, algebras become algebroids\ldots), is increasingly common in the literature with the convenient---albeit not so evocative---name of \emph{oidification}; see e.g.\@ \cite{jonsson2017poloids, orchard2020unifying}. 

The following notion is an oidification of left (associative) quasigroups, and will appear very natural after Theorem \ref{groupoid_structure_on_Q(A)}. Notice that an oidification of quasigroups appears in \cite[Definition 2.2]{quasigroupoidsALONSOALVAREZ}, although it is not given in the quiver-theoretic formalism. The name \emph{quasigroupoid} is already used for a different structure, and we prefer to build on Grabowski's definition of \emph{semiloopoid} \cite{grabowski2016loopoids}.
\begin{definition}\label{def:left-semiloopoid}
    A \emph{left semiloopoid} over $\Lambda$ is the datum of a quiver $\Gg$ over $\Lambda$, and a morphism of quivers $m_\bullet\colon \Gg\ot \Gg\to \Gg$, thus defining a binary operation $x\ot y\mapsto x\bullet y$; such that, for all $x\in\Gg(\lambda,\mu)$, the multiplication $x\bullet \blank$ is a bijective map $\Gg(\mu,\Lambda)\to \Gg(\lambda,\Lambda)$. For $y\in \Gg(\lambda,\Lambda)$, the unique $z\in \Gg(\mu,\Lambda)$ such that $x\bullet z = y$ is denoted by $x\backslash y$.

    A \emph{left associative semiloopoid} is a left semiloopoid $(\Gg,\bullet)$ with $\bullet$ associative.

    A left (associative) semiloopoid is a \emph{left unital} (\emph{associative}) \emph{semiloopoid} if, for all $\lambda\in \im(\target)$, there exists a loop $\mathbf{1}_\lambda$ on $\lambda$ such that $x\bullet \mathbf{1}_\lambda = x$ and $\mathbf{1}_\lambda \bullet y = y$ for all $x\in \Gg(\Lambda,\lambda)$, $y\in \Gg(\lambda,\Lambda)$.

A \emph{weak morphism of left semiloopoids} $f\colon \Gg\to \Gg'$ is a weak morphism $f= (f^1, f^0)\colon \Gg\to \Gg'$ satisfying $f^1(x\bullet_\Gg y) = f^1(x)\bullet_{\Gg'}f^1(y)$. A \emph{morphism of left semiloopoids over $\Lambda$} is defined analogously.
\end{definition}
\begin{remark}
    If $\Gg$ is a left associative semiloopoid, then every vertex $\lambda = \target(x)\in \im(\target)$ has a loop $x\backslash x$, whence $\im(\target)\subseteq \im(\source)$, and thus $\Lambda = \im(\source)$. 
    
    Observe that a left associative semiloopoid $\Gg$ is unital if and only if, for all $\lambda\in\im(\target)$, the loop $x\backslash x$ on $\lambda$ is independent of $x\in \Gg(\Lambda,\lambda)$. Indeed $x\bullet (x\backslash x)\bullet y = x\bullet y$ implies $(x\backslash x)\bullet y = y$ for all $y$. Therefore, if $\Gg$ is moreover unital, one must have $\mathbf{1}_\lambda = x\backslash x$ for all $x\in \Gg(\Lambda,\lambda)$. Conversely, if $x\backslash x$ is independent of $x$, for all $y\in\Gg(\Lambda,\lambda)$ one also has $y\bullet (x\backslash x ) = y\bullet (y\backslash y) = y$, thus $\mathbf{1}_\lambda = x\backslash x$ is a bilateral unit, and $\Gg$ is unital.
\end{remark}
\begin{example}
    Let $\Gg$ be the quiver
    \vspace{-2.5em}
    \[ \begin{tikzcd}
        \lambda \ar[r, bend left=10, "x"above]\ar[r, bend right = 10, "y"below]& \mu \ar[loop, out=-30, in=30, looseness=4, "{x\backslash x}"right]\ar[loop, out=-40, in=40, looseness=12, "{y\backslash y}"right]
    \end{tikzcd} \vspace{-2.5em}\]    
    with multiplication \begin{align*}
    &x\bullet (x\backslash x) = x,\quad y\bullet (y\backslash y) = y, \quad (x\backslash x)\bullet (y\backslash y)= (y\backslash y),\\ &(y\backslash y)\bullet (x\backslash x)= (x\backslash x),\quad (x\backslash x)\bullet (x\backslash x)= (x\backslash x),\quad (y\backslash y)\bullet (y\backslash y)= (y\backslash y).\end{align*} Observe that this is a left associative semiloopoid, and that both $x\backslash x$ and $y\backslash y$ are left units on $\mu$, but $\Gg$ is not unital.
\end{example}

Let $\Gg$ be a left unital associative semiloopoid. We naturally have a decomposition $\Lambda = \Lambda_{\mathrm{in}}\sqcup \Lambda_{\mathrm{un}}$, where $\Lambda_{\mathrm{un}}:= \im(\target)$ will be called the set of \emph{unital vertices}, and $\Lambda_{\mathrm{in}}:= \Lambda\smallsetminus \Lambda_{\mathrm{un}}$ will be called the set of \emph{initial vertices}.

\begin{remark}
	A left unital associative semiloopoid $\Gg$ over $\Lambda = \Lambda_{\mathrm{un}}$ is just a groupoid. In particular, for every left unital associative semiloopoid $\Gg$, the subquiver $\Gg(\Lambda_{\mathrm{un}}, \Lambda_{\mathrm{un}})$ is a groupoid.
\end{remark}
As a consequence of Theorem \ref{groupoid_structure_on_Q(A)} we shall get many examples of left unital associative semiloopoids that are not groupoids.

Observe that every morphism $f\colon \Gg\to \Gg'$ of left unital associative semiloopoids respects this decomposition: $f^1(\mathbf{1}_\lambda) = f^1(x\backslash x)$ is the unique element such that $f^1(x)\bullet' f^1(x\backslash x) = f^1(x\bullet (x\backslash x)) = f^1(x)$, and hence $f^1(x\backslash x) = f^1(x)\backslash' f^1(x) = \mathbf{1}'_{f^0(\lambda)}$. In particular, $f^0(\Lambda_{\mathrm{un}})\subseteq \Lambda'_{\mathrm{un}}$.
 
Dually, one may define \emph{right (unital and/or associative) semiloopoids} on a set $\Lambda = \im(\source)\sqcup \Lambda_{\mathrm{fin}}$, where the elements of $\Lambda_{\mathrm{fin}}$ are exactly the vertices with no outgoing arrow, and we call them \emph{final vertices}.
\begin{lemma}\label{lemma:semiloopoid_bijections}
Let $\Gg$ be a left unital associative semiloopoid. Then for all $\lambda\in \Lambda_{\mathrm{in}}$, and for all $\mu, \nu\in \Lambda_{\mathrm{un}}$ in the same connected component, such that $\Gg(\lambda,\nu)\neq \emptyset$, there are bijections $\Gg(\lambda,\mu)\cong \Gg(\mu, \nu) \cong Q(\lambda, \nu)$.
\end{lemma}
\begin{proof}
Let $y\in \Gg(\mu,\nu)$. Then $y$ is invertible, and the map $\blank\bullet y$ is a bijection $\Gg(\lambda,\mu)\to \Gg(\lambda,\nu)$.

Now we prove the bijection $\Gg(\mu,\nu)\cong \Gg(\lambda,\nu)$. Since there is an arrow $z\in\Gg(\lambda,\nu)$, and there is an (invertible) arrow $y\in \Gg(\mu,\nu)$, let $x:= z\bullet y^{-\bullet}$. It is clear that $x\bullet \blank$ is the desired bijection.
\end{proof}
\subsection{Yang--Baxter maps and braidings}
\begin{definition}
    Let $Q$ be a quiver over $\Lambda$. A (\emph{quiver-theoretic}) \emph{Yang--Baxter map} on $Q$ is a morphism $\sigma\colon Q\ot Q\to Q\ot Q$, satisfying the \emph{Yang--Baxter equation}\footnote{We call ``Yang--Baxter equation'' what should more correctly be called ``braid relation''. This mash-up of terminology is very customary for those who work with set-theoretic or quiver-theoretic solutions. However the reader is warned that, in the Hopf-theoretic and Lie-theoretic settings, the same habit can generate confusion.} 
    \begin{equation}\label{eq:ybe}\tag{\sc ybe}
        (\sigma\ot \id)(\id \ot \sigma)(\sigma\ot \id)= (\id \ot \sigma) (\sigma\ot \id)(\id \ot \sigma).
    \end{equation}
    We usually write in components $\sigma(x\ot y) = (x\rightharpoonup y)\ot (x\leftharpoonup y)$. A Yang--Baxter map $\sigma$ is \emph{left non-degenerate} if $x\rightharpoonup\blank\colon Q(\target(x), \Lambda)\to Q(\source(x),\Lambda)$ is a bijection for all $x$. It is \emph{right non-degenerate} if $\blank\leftharpoonup y\colon Q(\Lambda,\source(y))\to Q(\Lambda, \target(y))$ is a bijection for all $y$. It is \emph{non-degenerate} if it is both left and right non-degenerate. A Yang--Baxter map $\sigma$ is \emph{involutive} if $\sigma^2 = \id$.
\end{definition}
The \textsc{ybe} can be interpreted as a \emph{cube rule}; see Figure \ref{fig:closure_of_cubes}. This interpretation is pervasive in Dehornoy \textit{et al.\@} \cite{dehornoy2015foundations}.
\begin{figure}[t]
    \centering
    \begin{tikzpicture}
    \filldraw[fill=gray, draw opacity =0, fill opacity =0.2] (0,0.5)--(0,1.5)--(1,1.5)--(1,0.5)--(0,0.5);
    \draw[-Stealth] (0,1.5) to  node[sloped, above]{$x$} (1,1.5);
    \draw[-Stealth] (1,1.5) to  node[sloped, above]{$y$} (1,0.5);
    \draw[-Stealth] (0,1.5) to  node[sloped, below]{$x\rightharpoonup y$} (0,0.5);
    \draw[-Stealth] (0,0.5) to  node[sloped, below]{$x\leftharpoonup y$} (1,0.5);
    \draw[-Stealth] (0.2,1) arc[radius=.3, start angle=180, end angle=-90];
    \draw[-Stealth] (4.5,2) to (4.5,0.5);
    \draw[-Stealth] (3,0.5) to (4.5,0.5);
     \draw[-Stealth] (4.5,0.5) to (5,0);
    \filldraw[fill=gray, draw opacity =0, fill opacity =0.4] (3,0.5)--(3,2)--(4.5,2)--(5,1.5)--(5,0)--(3.5,0)--(3,0.5);
    \draw[-Stealth, thick] (3,2) to node[above]{$x$} (4.5,2);
    \draw[-Stealth] (3,2) to (3,0.5);
    \draw[-Stealth] (3,0.5) to (3.5,0);
    \draw[-Stealth] (3.5,0) to (5,0);
    \draw[-Stealth] (3.5,1.5) to (3.5,0);
    \draw[-Stealth] (3,2) to (3.5,1.5);
    \draw[-Stealth] (3.5,1.5) to (5,1.5);
    \draw[-Stealth, thick] (5,1.5) to node[right]{$z$} (5,0);
    \draw[-Stealth, thick] (4.5,2) to node[above]{$y$} (5,1.5);
    \end{tikzpicture}
    \caption{The \textsc{ybe} for $\sigma$ can be seen as the closure of cubes, where each \emph{oriented} face represents the application of $\sigma$. If $\sigma$ is moreover involutive, the orientation of the faces is superfluous.}
    \label{fig:closure_of_cubes}
\end{figure}

We recall the following definition from Andruskiewitsch \cite{andruskiewitsch2005quiver}. Here, for an endomap $f$ of $\Gg^{\otimes 2}$, we use the well-established notation $f_{12} =f\ot \id_\Gg$, $f_{23} = \id_{\Gg}\ot f$.
\begin{definition}\label{braided_groupoid}
    A \emph{pre-braided groupoid} is the datum of a groupoid $\Gg$, $\Lambda:= \Obj(\Gg)$, with multiplication $m\colon \Gg\otimes \Gg\to \Gg$ and family of units $\set{\mathbf{1}_\lambda}_{\lambda\in\Lambda}$; and of a morphism of $\mathsf{Quiv}_\Lambda$ $\sigma\colon \Gg\otimes \Gg\to \Gg\otimes \Gg$, $\sigma(x\ot y) = (x\rightharpoonup y)\ot( x\leftharpoonup y)$, satisfying the following properties for all $x\ot y\ot z$:
    \begin{align}
       &\label{bg1}\tag{\sc bg1}\sigma (x\ot \mathbf{1}_{\target(x)})=\mathbf{1}_{\source(x)}\ot  x;\\
       &\label{bg2}\tag{\sc bg2}\sigma(\mathbf{1}_{\source(x)}\ot x)= x\ot \mathbf{1}_{\target(x)};\\
        &\label{bg3}\tag{\sc bg3}\sigma m_{23} = m_{12}\sigma_{23}\sigma_{12},\\\nonumber &\text{i.e.\@ }x\rightharpoonup yz = (x\rightharpoonup y)((x\leftharpoonup y)\rightharpoonup z)\text{ and }x\leftharpoonup yz = (x\leftharpoonup y)\leftharpoonup z;\\
        &\label{bg4}\tag{\sc bg4} \sigma m_{12} = m_{23}\sigma_{12}\sigma_{23},\\&\nonumber \text{i.e.\@ }xy\leftharpoonup z = (x\leftharpoonup(y\rightharpoonup z))(y\leftharpoonup z)\text{ and }xy\rightharpoonup z = x\rightharpoonup(y\rightharpoonup z);\\
        &\label{braided-commutative}\tag{\sc bg5} m\sigma = m.
    \end{align}
The morphism $\sigma$ is called a \emph{pre-braiding}. A pre-braided groupoid $(\Gg,\sigma)$ is said to be \emph{braided} if $\sigma$ is an isomorphism; and in this case, $\sigma$ is called a \emph{braiding}. We denote by $\mathsf{BrGpd}_\Lambda$ the category of braided groupoids over $\Lambda$, endowed with morphisms $f$ of groupoids such that $f\ot f$ intertwines the two braidings.
\end{definition}
Conditions \eqref{bg3}, \eqref{bg4} are also called the \emph{hexagonal axioms}, while \eqref{braided-commutative} is the \emph{braided-commutativity} of $m$ with respect to $\sigma$. 

It is well known that pre-braided groupoids produce solutions to the \textsc{ybe} \cite{andruskiewitsch2005quiver}.
The solution corresponding to a pre-braided groupoid is left and right non-degenerate, and it is moreover bijective if the groupoid is braided.
\begin{remark}
    It is well known that, in general, a groupoid $\Gg$ may have solutions $\sigma\colon \Gg\ot \Gg\to \Gg\ot \Gg$ to the \textsc{ybe} that are not braidings. This is already true when $\Lambda$ is a singleton.
\end{remark}
The following definition is an oidification of skew (left) braces, discovered by Sheng, Tang, and Zhu \cite{sheng2024postgroupoidsquivertheoreticalsolutionsyangbaxter}. Notice that the name \emph{skew bracoid} was already used in the literature, by Martin-Lyons and Truman \cite{martin2024skew}, for a completely different structure which is relevant in Hopf--Galois theory and in the theory of the Yang--Baxter equation, but has no pretense of being an oidification of skew braces.
\begin{definition}[Sheng, Tang, and Zhu \cite{sheng2024postgroupoidsquivertheoreticalsolutionsyangbaxter}] A \emph{skew bracoid} over $\Lambda$ is the datum $(\Gg,\set{\cdot_\lambda}_{\lambda\in\Lambda},\bullet, \set{\mathbf{1}_\lambda}_{\lambda\in\Lambda})$, of a groupoid $(\Gg,\bullet, \set{\mathbf{1}_\lambda}_{\lambda\in\Lambda})$ over $\Lambda$ and, for all $\lambda\in\Lambda$, of a group operation $\cdot_\lambda$ on $\Gg(\lambda,\Lambda)$; satisfying the following compatibility for all $x\ot y,\, x\ot z\in \Gg\ot \Gg$:
    \begin{equation}\label{qtbc}\tag{\sc b{\tiny oid}c} x\bullet (y\cdot_{\target(x)} z) = (x\bullet y)\cdot_{\source(x)} x^{-1_{\source(x)}}\cdot_{\source(x)} (x\bullet z), \end{equation}
    where $x^{-1_{\source(x)}}$ denotes the inverse of $x$ in $(\Gg(\source(x),\Lambda), \cdot_{\source(x)})$. Notice that the bracoid compatibility \eqref{qtbc} implies that $\mathbf{1}_\lambda$ is the unit of $(\Gg(\lambda,\Lambda), \cdot_\lambda)$ for all $\lambda\in\Lambda$.
\end{definition}
As a generalisation of post-groups (see \cite{postgroups}), the following definition has also been given.
\begin{definition}[{Sheng, Tang, and Zhu \cite{sheng2024postgroupoidsquivertheoreticalsolutionsyangbaxter}}] \label{def:postgroupoid}A \textit{post-groupoid} over $\Lambda$ is the datum $(p\colon \Gg\to \Lambda, \set{\cdot_\lambda}_{\lambda\in\Lambda}, \set{1_\lambda}_{\lambda\in\Lambda}, \Phi, \triangleright)$ of a group bundle $p\colon \Gg\to \Lambda$, where the group structures on the loops $\Gg_\lambda$ are denoted by $(\Gg_\lambda, \cdot_\lambda, 1_\lambda)$; of a surjective map $\Phi\colon \Gg\to \Lambda$; and of a map $\triangleright\colon \Gg \fibre{\Phi}{p}\Gg\to \Gg$; satisfying
	\begin{enumerate}
		\item $\Phi(1_\lambda) = \lambda$ for all $\lambda\in\Lambda$;
		\item $p(g\triangleright h) = p(g)$ for all $g\in \Gg$, $h\in \Gg_{\Phi(g)}$;
		\item $\Phi(g\cdot_{p(g)} (g\triangleright h)) = \Phi(h)$ for all $g\in \Gg$, $h\in \Gg_{\Phi(g)}$;
		\item $g\triangleright (h\cdot_{\Phi(g)} k) = (g\triangleright h)\cdot_{p(g)}(g\triangleright k)$ for all $g\in \Gg$, $h,k\in \Gg_{\Phi(g)}$;
		\item $g\triangleright (h\triangleright k) = (g\cdot_{p(g)} (g\triangleright h))\triangleright k$ for all $g\in\Gg$, $h\in \Gg_{\Phi(g)}$, $k\in \Gg_{\Phi(h)}$;
		\item the maps $g\triangleright\blank \colon \Gg_{\Phi(g)}\to \Gg_{p(g)}$ are bijective for all $g$.
	\end{enumerate}
\end{definition}
The following result motivates the above definitions.
\begin{proposition}[{Andruskiewitsch \cite{andruskiewitsch2005quiver}, Sheng, Tang, and Zhu \cite{sheng2024postgroupoidsquivertheoreticalsolutionsyangbaxter}}] Given a groupoid $(\Gg,\bullet, \set{\mathbf{1}_\lambda}_{\lambda\in\Lambda})$ over $\Lambda$, the following data are equivalent:
\begin{enumerate}
    \item a braiding on $\Gg$;
    \item a matched pair of actions on a groupoid, as defined in Mackenzie \cite{mackenzie};
    \item a family of group structures $\set{(\Gg(\lambda,\Lambda), \cdot_\lambda)}_{\lambda\in\Lambda}$ that yield a skew bracoid on $\Gg$;
    \item a post-groupoid $(\source\colon \Gg\to \Lambda, \set{\cdot_\lambda}_{\lambda\in\Lambda}, \set{\mathbf{1}_\lambda}_{\lambda\in\Lambda}, \target, \triangleright)$ such that $x\bullet y = x\cdot_{\source(x)}(x\triangleright y)$ for all $x\ot y\in \Gg{\ot 2}$;
    \item a $1$-cocycle groupoid datum, as defined by Andruskiewitsch \cite{andruskiewitsch2005quiver}.
\end{enumerate}
\end{proposition}
In the previous proposition, we shall exclusively be interested in the equivalences between (\textit{i})--(\textit{iv}), which are also isomorphisms of categories; see \cite{andruskiewitsch2005quiver, sheng2024postgroupoidsquivertheoreticalsolutionsyangbaxter}. The correspondence of the other data with (\textit{v}) is just an equivalence of categories.

We give an explicit outline of the correspondences between (\textit{i})--(\textit{iv}).
\begin{enumerate}
	\item[(\textit{i})--(\textit{ii})] If $\sigma\colon x\ot y \mapsto (x\rightharpoonup y)\ot (x\leftharpoonup y)$ is a braiding, then $(\rightharpoonup,\leftharpoonup)$ is a matched pair of actions. Every braiding can be constructed in this way.
	\item[(\textit{ii})--(\textit{iii})] Given the actions $(\rightharpoonup,\leftharpoonup)$, we define the bundle of group structures as $x\cdot_\lambda y:= x\bullet (x^{-1}\rightharpoonup y)$, for $x,y\in \Gg(\lambda,\Lambda)$. Conversely, given a skew bracoid on $\Gg$, we define $x\rightharpoonup y = x^{-1_{\source(x)}} \cdot_{\source(x)} (x\bullet y)$ and $x\leftharpoonup y = (x\rightharpoonup y)^{-\bullet}\bullet x\bullet y$, for $x\ot y\in \Gg^{\ot 2}$. 
	\item[(\textit{iii})--(\textit{iv})] The bundle $p\colon \Gg\to \Lambda$ in (\textit{iv}) corresponds to the source map $\source\colon \Gg\to \Lambda$ in (\textit{iii}). The surjective map $\Phi\colon \Gg\to \Lambda$ in (\textit{iv}) corresponds to the target map $\target\colon \Gg\to \Lambda$ in (\textit{iii}). Each set of outgoing arrows $\Gg(\lambda,\Lambda)$ in $\Gg\twomapsright{\source}{\target}\Lambda$ corresponds to a set of loops in $p\colon \Gg\to\Lambda$. The group bundle operation $\cdot_\lambda$ on the set of loops $p^{-1}(\lambda)$ in (\textit{iv}) will correspond precisely to the operation $\cdot_\lambda$ on $\Gg(\lambda,\Lambda)$ given in (\textit{iii}). Since now $\Gg\fibre{\Phi}{p}\Gg$ in (\textit{iv}) corresponds to $\Gg\ot \Gg$ in (\textit{iii}), we can define $\triangleright$ as $\rightharpoonup$. The groupoid structure $\bullet$ is obtained from the post-groupoid structure by setting $x\bullet y  = x\cdot_{p(x)} (x\triangleright y)$ for all $(x,y)\in \Gg\fibre{\Phi}{p}\Gg$ (this is called the \textit{Grossman--Larson groupoid}, see \cite[Theorem 2.16]{sheng2024postgroupoidsquivertheoreticalsolutionsyangbaxter}).
\end{enumerate}

In view of a slight generalisation of the correspondence, we give the following definition.
\begin{definition}\label{def:braided_semiloopoid}
    A \emph{pre-braided left unital associative semiloopoid} is the datum of a left unital associative semiloopoid $\Gg$, with $\Lambda = \Lambda_{\mathrm{in}}\sqcup \Lambda_{\mathrm{un}} = \Obj(\Gg)$, endowed with the multiplication $m\colon \Gg\ot \Gg\to \Gg$ and the family of right units $\set{\mathbf{1}_\lambda}_{\lambda\in\Lambda_{\mathrm{un}}}$; and of a morphism of $\mathsf{Quiv}_\Lambda$ $\sigma\colon \Gg\ot \Gg\to \Gg\ot \Gg$, $\sigma(x\ot y) = (x\rightharpoonup y)\ot (x\leftharpoonup y)$, satisfying \eqref{bg1} and \eqref{bg2} for all $x\in \Gg(\Lambda_{\mathrm{un}}, \Lambda_{\mathrm{un}})$, and satisfying \eqref{bg3}--\eqref{braided-commutative} for all $x\ot y\ot z$. We call $\sigma$ a \emph{pre-braiding} on $\Gg$. If moreover $\sigma $ is bijective, we call it a \emph{braiding}, and we say that $\Gg$ is \emph{braided}.
\end{definition}
\begin{lemma}
    Pre-braided left unital associative semiloopoids produce solutions to the \textsc{ybe}. 
\end{lemma}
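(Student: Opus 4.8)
The plan is to verify the \textsc{ybe} directly on a generic composable path $x\ot y\ot z\in\Gg^{\ot 3}$, by expanding both sides of \eqref{eq:ybe} in components and matching them one coordinate at a time. Since $\sigma$ is a morphism in $\mathsf{Quiv}_\Lambda$, both composites $(\sigma\ot\id)(\id\ot\sigma)(\sigma\ot\id)$ and $(\id\ot\sigma)(\sigma\ot\id)(\id\ot\sigma)$ send $x\ot y\ot z$ to genuine $3$-paths, so every product of adjacent components appearing below is automatically composable and well defined. Throughout, the workhorse identity is braided-commutativity \eqref{braided-commutative}, which I read in components as $(a\rightharpoonup b)\bullet(a\leftharpoonup b) = a\bullet b$ for every composable pair $a\ot b$.

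First I would treat the two outer coordinates. For the leftmost component, writing $u = x\rightharpoonup y$ and $v = x\leftharpoonup y$, the $\rightharpoonup$-part of \eqref{bg4} gives $(u\bullet v)\rightharpoonup z = u\rightharpoonup(v\rightharpoonup z)$; since $u\bullet v = x\bullet y$ by \eqref{braided-commutative}, applying the $\rightharpoonup$-part of \eqref{bg4} once more collapses this to $x\rightharpoonup(y\rightharpoonup z)$, which is exactly the leftmost component of the other side. For the rightmost component, the $\leftharpoonup$-part of \eqref{bg3} reads $a\leftharpoonup(b\bullet c) = (a\leftharpoonup b)\leftharpoonup c$; feeding in $a=x$, $b = y\rightharpoonup z$, $c = y\leftharpoonup z$ and using $(y\rightharpoonup z)\bullet(y\leftharpoonup z) = y\bullet z$ shows both rightmost components equal $x\leftharpoonup(y\bullet z) = (x\leftharpoonup y)\leftharpoonup z$.

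The middle coordinate is the crux, and it is where the semiloopoid hypothesis enters. Rather than matching it directly, I would multiply the first two components of each side and show the two products agree. On the left, that pair has the shape $\bigl((x\rightharpoonup y)\rightharpoonup w\bigr)\ot\bigl((x\rightharpoonup y)\leftharpoonup w\bigr)$ with $w = (x\leftharpoonup y)\rightharpoonup z$, so \eqref{braided-commutative} turns its product into $(x\rightharpoonup y)\bullet w$, which the $\rightharpoonup$-part of \eqref{bg3} identifies with $x\rightharpoonup(y\bullet z)$. On the right, the same $\rightharpoonup$-part of \eqref{bg3} together with $(y\rightharpoonup z)\bullet(y\leftharpoonup z)=y\bullet z$ again yields $x\rightharpoonup(y\bullet z)$. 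Hence the two first-two-component products coincide. Denoting by $a$ and $b$ the middle components of the left- and right-hand sides, and by $c = x\rightharpoonup(y\rightharpoonup z)$ the common leftmost component found above, this says precisely $c\bullet a = c\bullet b$; left-multiplication $c\bullet\blank$ is bijective by the defining axiom of a left semiloopoid, so cancelling $c$ on the left forces $a = b$, completing the verification.

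The one point requiring care—and the reason the full groupoid structure is unnecessary—is that the argument only ever cancels on the \emph{left}: the outer components are obtained by outright computation, while the middle component is recovered by left division, which a left semiloopoid supplies even when no right inverse exists. I would close by observing that \eqref{bg1} and \eqref{bg2} are never invoked, consistent with unitality being irrelevant to the \textsc{ybe}; only associativity (so that the triple products are meaningful), the hexagons \eqref{bg3}--\eqref{bg4}, braided-commutativity \eqref{braided-commutative}, and left division are used.
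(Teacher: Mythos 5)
Your proof is correct, and at the decisive step it takes a genuinely different route from the paper's. The paper follows the Lu--Yan--Zhu scheme literally: writing the two sides of \eqref{eq:ybe} as $a\ot b\ot c$ and $a'\ot b'\ot c'$, it gets $a=a'$ and $c=c'$ from the hexagons, then proves the \emph{full} triple identity $a\bullet b\bullet c = a'\bullet b'\bullet c'$ from \eqref{braided-commutative}, left-cancels $a$ to reach $b\bullet c = b'\bullet c$, and must then cancel $c$ on the \emph{right}; since right division is unavailable in a left semiloopoid, the paper observes that the middle arrows of a composable triple have source and target in $\Lambda_{\mathrm{un}} = \im(\target)$ and that $\Gg(\Lambda_{\mathrm{un}},\Lambda_{\mathrm{un}})$ is a groupoid---this is exactly where unitality enters. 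You instead multiply only the \emph{first two} components of each side, compute both products explicitly to be $x\rightharpoonup(y\bullet z)$ via the $\rightharpoonup$-part of \eqref{bg3} together with \eqref{braided-commutative}, and conclude with a single left cancellation. This buys something real: your argument never invokes \eqref{bg1}, \eqref{bg2}, or the groupoid structure on the unital part, so it actually proves the stronger statement that any pre-braided left \emph{associative} semiloopoid, unital or not, yields a solution, provided \eqref{bg3}--\eqref{braided-commutative} hold on all composable paths; it also spares the reader the small unproved assertion in the paper that $\Gg(\Lambda_{\mathrm{un}},\Lambda_{\mathrm{un}})$ is a groupoid (true, but requiring a short verification using units and left division). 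One cosmetic point: your closing remark that associativity is needed ``so that the triple products are meaningful'' is not quite apt---no triple product ever appears in your computation (only the paper's version forms $a\bullet b\bullet c$), and every product you take is binary and composable because $\sigma$ is a morphism of quivers; associativity is of course part of the hypotheses, but your argument appears not to use it at all, which only strengthens your observation about which axioms carry the proof.
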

\begin{proof} Given $x\ot y\ot z \in \Gg^{\ot 3}$, let $\sigma_{12}\sigma_{23}\sigma_{12}(x\ot y\ot z) = a\ot b\ot c$ and let $\sigma_{12}\sigma_{23}\sigma_{12}(x\ot y\ot z) = a'\ot b'\ot c'$. In a similar fashion as \cite[Theorem 1]{LYZ}, using \eqref{braided-commutative} one proves $a\bullet b\bullet c = a'\bullet b'\bullet c'$. Moreover, $c = c'$ and $a = a'$ follow from \eqref{bg3} and \eqref{bg4} respectively. Since $a\bullet\blank$ is invertible, this implies $b\bullet c = b'\bullet c$. Finally, since both the sources and the targets of $b$ and $b'$ lie in $\Lambda_{\mathrm{un}}$, and since $\Gg(\Lambda_{\mathrm{un}},\Lambda_{\mathrm{un}})$ is a groupoid, we conclude $b = b'$.
\end{proof}
Dually, one may give a definition of \emph{braided right unital associative semiloopoids}, and prove that they also produce solutions of the \textsc{ybe}.
\begin{remark}
Let $(\Gg,\sigma)$ be a pre-braided left unital associative semiloopoid. The unital part of $\Gg$ is a grupoid, and $\sigma$ restricts to a pre-braiding of groupoids on $\Gg(\Lambda_{\mathrm{un}}, \Lambda_{\mathrm{un}})$.
\end{remark}

\subsection{Groupoids of pairs}
Let $\Lambda$ be a nonempty set. The complete quiver of degree $1$ over $\Lambda$ is called the \emph{groupoid of pairs} over $\Lambda$ (see e.g.\@ \cite[Example 1.11]{introGroupoids}). We denote it by $\hat{\Lambda}$. For all $a,b\in \Lambda$, we denote by $[a,b]$ the unique arrow from $a$ to $b$. For $a_0, \dots, a_n\in \Lambda$, the unique path that touches the vertices $a_0,\dots, a_n$ in this order is denoted by $[a_0, \dots, a_n] := [a_0, a_1]\ot [a_1, a_2]\ot \dots \ot [a_{n-1}, a_n]$.
\begin{remark}\label{rem:indeed_are_groupoids}
    Groupoids of pairs are indeed groupoids, with the multiplication $\bullet$ being defined (in the only possible way) by $[a,b]\bullet [b,c] = [a,c]$.
\end{remark}
We shall use the expressions ``groupoid of pairs'' and ``complete quiver of degree $1$'' as synonyms, although they only are so up to isomorphism. 
\begin{remark}
    If $\Gg$ is the groupoid of pairs over $\Lambda$, a morphism $\sigma\colon \Gg\ot \Gg\to \Gg\ot \Gg$ is uniquely determined by a ternary operation $\langle\blank,\blank,\blank\rangle$ on $\Lambda$, via the
    \begin{equation}\label{eq:sigma_ternary} \sigma([a,b,c]) = [a,\langle a,b,c\rangle, c]. \end{equation}
    This is the ternary operation appearing in Proposition \ref{prop:braiding-iff-ternary-iff-group}.
\end{remark}

\subsection{Heaps} The definition of \emph{heap} was originally given by Baer \cite{baer1929einfuhrung} with the German name \textit{(die) Schar}, plur.\@ \textit{Scharen}, generalising Pr\"ufer \cite{prufer1924theorie}, and then further generalised by Wagner \cite{wagner1953} who translated the name in Russian as груда. We here refer to Hollings and Lawson \cite{hollings2017wagner} and Brzezi\'nski \cite{BRZEZINSKITrussesParagons} for the current notion and the main results.
\begin{definition}
    A \emph{heap} is the datum of a set $\Lambda$ and a ternary operation $\langle\blank,\blank,\blank\rangle$ on $\Lambda$ satisfying, for all $a,b,c,d\in \Lambda$, the following conditions:
    \begin{align}
        &\label{maltsev_1}\tag{\sc m1} \langle a,b,b\rangle = a,\\
        &\label{maltsev_2}\tag{\sc m2} \langle a,a,b\rangle = b,\\
        &\label{associativity}\tag{\sc a} \langle a,b,\langle c,d,e\rangle\rangle = \langle \langle a,b,c\rangle, d,e\rangle.
    \end{align}
    A heap is called \emph{abelian} if $\langle a,b,c\rangle = \langle c,b,a\rangle$ for all $a,b,c\in\Lambda$. 
\end{definition}
\begin{remark}\label{rem:heaps_and_groups}
   There is a well known correspondence between groups and pointed heaps; see \cite{baer1929einfuhrung, breaz2024heaps,  BRZEZINSKITrussesParagons, wagner1953}. If $\Lambda$ is a heap, and $\zeta\in\Lambda$ is a distinguished element, then the binary operation $a*_\zeta b := \langle a,\zeta, b\rangle$ defines a group structure on $\Lambda$ with unit $\zeta$. The several group structures obtained by choosing a unit $\zeta$ are all isomorphic with each other, and the isomorphism $(\Lambda,*_\zeta)\to (\Lambda,*_\xi)$ is given by $\blank *_\xi \zeta^{-\xi}$ where $(\blank)^{-\xi}$ denotes the inverse with respect to $*_\xi$. Conversely, if $(A,\cdot)$ is a group, then $\langle a,b,c\rangle:= a b^{-1}c$ defines a heap structure.
\end{remark}
\begin{remark}\label{rem:a-iff-a1a2}
     Suppose given a set $\Lambda$ and a ternary operation $\langle\blank,\blank,\blank\rangle$ on $\Lambda$ satisfying \eqref{maltsev_1} and \eqref{maltsev_2}. It is known (see \cite[Proposition 7]{BournGranJacqmin}) that this is a heap if and only if the following axioms are satisfied for all $a,b,c,d\in\Lambda$:
    \begin{align}
        \label{associativity_1}\tag{\sc a1}&\langle a,b,d\rangle = \langle \langle a,b,c\rangle, c, d\rangle,\\
\label{associativity_2}\tag{\sc a2}&\langle a,c,d\rangle = \langle a, b, \langle b,c,d\rangle\rangle.
    \end{align}
    %One of the implications is trivial. Conversely, if \eqref{associativity_1} and \eqref{associativity_2} are satisfied, one can easily prove that the operation $a*_u b:= \points*{a,u,b}$ is a left quasigroup operation, with $a\backslash_u b = \points*{u,a,b}$, and satisfies 
    %\[ a*_u (b\backslash_u d) = (a*_u (b\backslash_u c))*_u (c\backslash_u d).\]
    %By a straightforward computation, the above condition is equivalent to the associativity of $*_u$, which in turn is equivalent to \eqref{associativity}; see \cite[Lemma 2.1 (1)]{BRZEZINSKITrussesParagons}.
\end{remark}
Groupoids of pairs are relevant to us for the following reason.
\begin{proposition}[{see \cite[Theorem 7.13]{ferrishibu}}] \label{prop:braiding-iff-ternary-iff-group}
    Let $\Lambda$ be a set, and $\Gg = \hat{\Lambda}$ the associated groupoid of pairs. Then, the following data are equivalent:
    \begin{enumerate}
        \item[(\textsc{a}\textit{i})] a braiding on $(\Gg,\bullet)$;
        \item[(\textsc{a}\textit{ii})] a heap structure on $\Lambda$.
    \end{enumerate}
    As a consequence, the following data are also equivalent:
     \begin{enumerate}
        \item[(\textsc{b}\textit{i})] a braiding on $(\Gg,\bullet)$, and a distinguished vertex $\zeta\in\Lambda$;
        \item[(\textsc{b}\textit{ii})] a heap structure on $\Lambda$, and a distinguished element $\zeta\in\Lambda$;
        \item[(\textsc{b}\textit{iii})] a group structure on $\Lambda$.
    \end{enumerate}
    Via this correspondence, involutive braidings correspond to abelian group structures and to abelian heap structures.
\end{proposition}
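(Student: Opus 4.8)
The plan is to use the bijection, recalled in \eqref{eq:sigma_ternary}, between morphisms $\sigma\colon\Gg\ot\Gg\to\Gg\ot\Gg$ of $\mathsf{Quiv}_\Lambda$ and ternary operations $\langle\blank,\blank,\blank\rangle$ on $\Lambda$, and to show that it restricts to a bijection between braidings and heap structures. Once (\textsc{a}\textit{i})$\Leftrightarrow$(\textsc{a}\textit{ii}) is established, the chain (\textsc{b}\textit{i})$\Leftrightarrow$(\textsc{b}\textit{ii})$\Leftrightarrow$(\textsc{b}\textit{iii}) and the statement about involutivity follow quickly. Throughout I write $\mathbf{1}_\lambda = [\lambda,\lambda]$, so that a composable pair in $\Gg\ot\Gg$ is exactly a path $[a,b,c]$, and from $\sigma([a,b,c]) = [a,\langle a,b,c\rangle,c]$ I read off $x\rightharpoonup y = [a,\langle a,b,c\rangle]$ and $x\leftharpoonup y = [\langle a,b,c\rangle, c]$ for $x = [a,b]$, $y = [b,c]$.

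First I would translate the pre-braiding axioms of Definition \ref{braided_groupoid} one by one. Evaluating \eqref{bg1} and \eqref{bg2} on $[a,b,b]$ and $[a,a,b]$ respectively gives precisely $\langle a,b,b\rangle = a$ and $\langle a,a,b\rangle = b$, that is, the Mal'tsev identities \eqref{maltsev_1} and \eqref{maltsev_2}. Feeding the path $[a,b,c,d]$ (i.e.\@ $x=[a,b]$, $y=[b,c]$, $z=[c,d]$) into \eqref{bg3} and comparing the first tensor factors yields $\langle a,b,d\rangle = \langle\langle a,b,c\rangle, c, d\rangle$, which is \eqref{associativity_1}; the $\leftharpoonup$-component reduces to the same identity. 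Symmetrically, \eqref{bg4} unfolds to $\langle a,c,d\rangle = \langle a,b,\langle b,c,d\rangle\rangle$, which is \eqref{associativity_2}. Finally \eqref{braided-commutative} is automatic on $\Gg=\hat\Lambda$, since $(x\rightharpoonup y)\bullet(x\leftharpoonup y) = [a,\langle a,b,c\rangle]\bullet[\langle a,b,c\rangle, c] = [a,c] = x\bullet y$ irrespective of the middle vertex. By Remark \ref{rem:a-iff-a1a2}, the conjunction of \eqref{maltsev_1}, \eqref{maltsev_2}, \eqref{associativity_1}, \eqref{associativity_2} is exactly the heap condition, so a pre-braiding on $\hat\Lambda$ is the same datum as a heap structure on $\Lambda$.

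It then remains to check that every pre-braiding arising from a heap is a genuine braiding, i.e.\@ that $\sigma$ is bijective. Since $\sigma$ fixes the source $a$ and the target $c$ of each composable path $[a,b,c]$, bijectivity of $\sigma$ amounts to bijectivity of the map $b\mapsto\langle a,b,c\rangle$ on $\Lambda$ for each fixed $a,c$; I would exhibit its inverse explicitly as $m\mapsto\langle c,m,a\rangle$, verifying $\langle c,\langle a,b,c\rangle, a\rangle = b$ (and the symmetric identity) by a direct computation in the group associated with any basepoint, where $\langle x,y,z\rangle = xy^{-1}z$. This completes (\textsc{a}\textit{i})$\Leftrightarrow$(\textsc{a}\textit{ii}).

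For the second chain, (\textsc{b}\textit{i})$\Leftrightarrow$(\textsc{b}\textit{ii}) is immediate by transporting the distinguished vertex $\zeta$ across the bijection of the first part, and (\textsc{b}\textit{ii})$\Leftrightarrow$(\textsc{b}\textit{iii}) is the heap--group correspondence recalled above the proposition, via $a\cdot^\zeta b = \langle a,\zeta, b\rangle$ and $\langle a,b,c\rangle = ab^{-1}c$. For the involutivity claim I would compute $\sigma^2([a,b,c]) = [a,\langle a,\langle a,b,c\rangle, c\rangle, c]$, so that $\sigma^2=\id$ is equivalent to $\langle a,\langle a,b,c\rangle, c\rangle = b$ for all $a,b,c$; in the associated group this reads $ac^{-1}ba^{-1}c = b$, which (taking $c$ to be the unit) is equivalent to commutativity, hence to the group being abelian and, equivalently, to the heap satisfying $\langle a,b,c\rangle = \langle c,b,a\rangle$. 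The argument is in essence careful bookkeeping; the only steps that are not purely formal are the bijectivity of $\sigma$ in the third paragraph and the systematic check that each hexagonal axiom collapses onto a single heap identity across both of its components.
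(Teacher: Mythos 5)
Your proof is correct and takes essentially the same route as the paper's: a termwise translation of \eqref{bg1}--\eqref{bg4} into \eqref{maltsev_1}, \eqref{maltsev_2}, \eqref{associativity_1}, \eqref{associativity_2} via the ternary operation of \eqref{eq:sigma_ternary}, the observation that \eqref{braided-commutative} is automatic on $\hat\Lambda$, and the classical pointed-heap--group correspondence for the second chain. The only difference is that you make explicit two points the paper leaves implicit---the bijectivity of $\sigma$ (with the inverse middle-vertex map $m\mapsto\langle c,m,a\rangle$) and the involutivity computation $\langle a,\langle a,b,c\rangle,c\rangle = b$---both of which check out.
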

\begin{proof}
    One immediately has the equivalences\begin{align*} &\eqref{maltsev_1}\iff\eqref{bg1}, && \eqref{maltsev_2}\iff\eqref{bg2},\\
    &\eqref{associativity_1}\iff\eqref{bg3}, && \eqref{associativity_2}\iff\eqref{bg4}.\end{align*} 
    Observe that \eqref{braided-commutative} is always satisfied, because $m \sigma([a,b,c])$ and $m([a,b,c])$ are both forced to be $[a,c]$. This proves (\textsc{a}\textit{i})$\iff$(\textsc{a}\textit{ii}). The rest of the statement follows from Remark \ref{rem:heaps_and_groups}.
\end{proof}
Solutions of the \textsc{ybe} on a groupoid of pairs are already considered in \cite{matsumotoshimizu, shibukawa2007invariance}, and they are called \emph{principal homogeneous solutions}.
\section{Quiver-theoretic skew braces}\noindent \label{sec:qtsb}In this section, we define \emph{quiver-theoretic skew braces} as a slight generalisation of skew bracoids, aimed at comprising the case of non zero-symmetric dynamical skew braces. We prove that quiver-theoretic skew braces produce left non-degenerate braided left unital associative semiloopoids. The computations are very direct, and reminiscent of \cite[Theorem 3.1]{guarnieri2017skew} and other similar results.
\begin{definition}\label{def:qtsb}
    A \textit{quiver-theoretic skew brace} $(\Gg,\set{\cdot_\lambda}_{\lambda\in\Lambda},\bullet, \set{\mathbf{1}_\lambda}_{\lambda\in\Lambda})$ over $\Lambda$ is the datum of a left unital associative semiloopoid $(\Gg,\bullet, \set{\mathbf{1}_\lambda}_{\lambda\in\Lambda_{\mathrm{un}}})$ over $\Lambda = \Lambda_{\mathrm{in}}\sqcup\Lambda_{\mathrm{un}}$ and, for all $\lambda\in\Lambda$, of a group operation $\cdot_\lambda$ on $\Gg(\lambda,\Lambda)$; satisfying the bracoid compatibility \eqref{qtbc}. 
    
    Notice that the compatibility \eqref{qtbc} implies that $\mathbf{1}_\lambda$ is the unit of $(\Gg(\lambda,\Lambda), \cdot_\lambda)$ for all $\lambda\in\Lambda_{\mathrm{un}}$.
    Reprising \cite[Definition 3.4]{matsu}, a quiver-theoretic skew brace is called \emph{zero-symmetric} if $\Lambda = \Lambda_{\mathrm{un}}$, i.e., if $\Gg$ is a groupoid, i.e., if $(\Gg,\set{\cdot_\lambda}_{\lambda\in\Lambda},\bullet, \set{\mathbf{1}_\lambda}_{\lambda\in\Lambda})$ is a skew bracoid.

    A \emph{quiver-theoretic brace}, or quiver-theoretic skew brace \emph{of abelian type}, is a quiver-theoretic skew brace such that all the group operations $\cdot_\lambda$ are commutative.

    A \textit{weak morphism} $f\colon \Gg\to \Hh$ of quiver-theoretic skew braces is a weak morphism $f = (f^1, f^0)\colon \Gg\to \Hh$ which is a weak morphism of left unital associative semiloopoids from $(\Gg,\bullet_\Gg)$ to $  (\Hh,\bullet_\Hh)$, and such that $f^1|_{\Gg(\lambda,\Lambda)}^{\Hh(f^0(\lambda),\Lambda)}\colon (\Gg(\lambda,\Lambda), \cdot_\lambda)\to (\Hh(f^0(\lambda),\Lambda), \cdot_{f^0(\lambda)})$ is a group homomorphism for all $\lambda\in\Lambda$. A morphism  \emph{over $\Lambda$} is a weak morphism with $f^0 = \id_\Lambda$. Denote by $\mathsf{QTSB}_\Lambda$ the category of quiver-theoretic skew braces over $\Lambda$, with morphisms over $\Lambda$; and denote by $\mathsf{QTSB}^0_\Lambda$ its full subcategory of skew bracoids over $\Lambda$.
\end{definition}
We shall henceforth drop the subscripts from the notation (observe that the subscripts specifying the vertices can be retrieved uniquely from an expression, by the request that the expression is well defined). As usual, we omit the symbol $\cdot$ in the multiplication. We denote a quiver-theoretic skew brace by simply $\Gg$, when the other operations are understood.
\begin{remark}
A Yang--Baxter map on a quiver can be seen as a partial Yang--Baxter map on a set. Notions of \emph{partial braces} \cite{chouraquiPartial} and \emph{$\mathscr{M}$-braces} \cite{chouraquiMbraces} have been introduced by Chouraqui, and both are distinct from quiver-theoretic skew braces.
\end{remark}
%%%%%%%%%%%%%%%%%%%%%

\begin{lemma}
    Let $\Gg$ be a quiver-theoretic skew brace. Then:
    \begin{enumerate}
        \item $a\bullet (b^{-1}c) = a(a\bullet b)^{-1}(a\bullet c)$;
        \item $a\bullet (bc^{-1}) = (a\bullet b)(a\bullet c)^{-1}a$;
    \end{enumerate}
    for all $a\ot b,\, a\ot c\in\Gg\ot \Gg$.
\end{lemma}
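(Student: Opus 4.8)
The plan is to reduce both identities to the bracoid compatibility \eqref{qtbc}, which in the streamlined notation reads $a\bullet(bc) = (a\bullet b)\,a^{-1}\,(a\bullet c)$, via a single auxiliary computation of $a\bullet b^{-1}$. Before any manipulation I would record that all the relevant elements $a$, $a\bullet b$, $a\bullet b^{-1}$ and $a^{-1}$ lie in the \emph{same} group $(\Gg(\source(a),\Lambda),\cdot_{\source(a)})$: since $a\ot b\in\Gg\ot\Gg$ forces $\source(b)=\target(a)$, the inverse $b^{-1}$ taken in $(\Gg(\target(a),\Lambda),\cdot_{\target(a)})$ again has source $\target(a)$, so $a\ot b^{-1}$ is composable and $a\bullet b^{-1}\in\Gg(\source(a),\Lambda)$; likewise for the other factors. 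This means every equation below is an ordinary identity inside one fixed group, so the source/target bookkeeping becomes harmless.

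The key step is to compute $a\bullet b^{-1}$. Setting $c=b^{-1}$ in \eqref{qtbc} and using $b\,b^{-1}=\mathbf{1}_{\target(a)}$ together with the right-unit law $a\bullet\mathbf{1}_{\target(a)}=a$, I obtain
\[ a = (a\bullet b)\,a^{-1}\,(a\bullet b^{-1}). \]
Solving this equation in $(\Gg(\source(a),\Lambda),\cdot_{\source(a)})$ — left-multiplying first by $(a\bullet b)^{-1}$ and then by $a$ — yields the auxiliary identity
\[ a\bullet b^{-1} = a\,(a\bullet b)^{-1}\,a. \]

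Identity (\textit{i}) then follows by applying \eqref{qtbc} with $b^{-1}$ in the middle slot: one has $a\bullet(b^{-1}c)=(a\bullet b^{-1})\,a^{-1}\,(a\bullet c)$, and substituting the auxiliary formula collapses the factor $a\,a^{-1}$ to the unit, giving $a\bullet(b^{-1}c)=a\,(a\bullet b)^{-1}\,(a\bullet c)$. Symmetrically, for identity (\textit{ii}) I would apply \eqref{qtbc} with $c^{-1}$ in the last slot, obtaining $a\bullet(bc^{-1})=(a\bullet b)\,a^{-1}\,(a\bullet c^{-1})$, and again plug in $a\bullet c^{-1}=a\,(a\bullet c)^{-1}\,a$, so that the internal $a^{-1}\,a$ cancels and leaves $a\bullet(bc^{-1})=(a\bullet b)\,(a\bullet c)^{-1}\,a$.

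I do not expect a serious obstacle: once the auxiliary formula for $a\bullet b^{-1}$ is in hand, both statements are immediate consequences of \eqref{qtbc} followed by a single cancellation. The only point demanding care is the one flagged at the outset — confirming that all the products genuinely take place in the single group $(\Gg(\source(a),\Lambda),\cdot_{\source(a)})$, so that the group-theoretic manipulations are legitimate — and the composability constraints built into $a\ot b,\,a\ot c\in\Gg\ot\Gg$ guarantee exactly this.
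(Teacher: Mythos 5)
Your proof is correct and is essentially the paper's own argument: the paper simply cites \cite[Lemma 1.7]{guarnieri2017skew} and notes that all steps are well defined, and your computation of $a\bullet b^{-1} = a\,(a\bullet b)^{-1}\,a$ from \eqref{qtbc} with $c=b^{-1}$, followed by the two substitutions, is exactly that argument transplanted to the quiver setting. Your explicit checks are precisely the ones the paper gestures at --- that $\target(a)\in\im(\target)=\Lambda_{\mathrm{un}}$ so the unit loop $\mathbf{1}_{\target(a)}$ exists and is the unit of $\cdot_{\target(a)}$, that all products live in the single group $(\Gg(\source(a),\Lambda),\cdot_{\source(a)})$, and (implicitly, since you never invoke $(\blank)^{-\bullet}$) that zero-symmetry is not needed.
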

\begin{proof}
    The proof is the same as for \cite[Lemma 1.7]{guarnieri2017skew}: just observe that all steps are well defined. Notice, moreover, that the proof never uses the inverse $(\blank)^{-\bullet}$, thus the result holds without the assumption of zero-symmetry.
\end{proof}
\begin{proposition}[{cf.\@ \cite[Proposition 1.9]{guarnieri2017skew}}] \label{skew-brace-equivalent-conditions}
    Let $(\Gg,\bullet)$ be a left unital associative semiloopoid over $\Lambda=\Lambda_{\mathrm{in}}\sqcup\Lambda_{\mathrm{un}}$, and for all $\lambda\in\Lambda$ let $\cdot_\lambda$ be a group operation on $\Gg(\lambda,\Lambda)$. For all $a\ot b\in \Gg\ot \Gg$, define $a\rightharpoonup b:= a^{-1}(a\bullet b)$. Then, the following are equivalent:
    \begin{enumerate}
        \item $\Gg$ is a quiver-theoretic skew brace;
        \item $(a\bullet b)\rightharpoonup c = a\rightharpoonup (b\rightharpoonup c)$ for all $a\ot b\ot c\in \Gg^{\ot 3}$;
        \item $a\rightharpoonup (bc) = (a\rightharpoonup b)(a\rightharpoonup c)$ for all $a\ot b, a\ot c\in \Gg^{\ot 2}$.
    \end{enumerate}
\end{proposition}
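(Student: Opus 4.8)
The plan is to prove the three conditions mutually equivalent by a short cyclic chain, $(\textit{i})\!\iff\!(\textit{iii})$, $(\textit{i})\!\Rightarrow\!(\textit{ii})$ and $(\textit{ii})\!\Rightarrow\!(\textit{i})$, following the pattern of \cite[Proposition 1.9]{guarnieri2017skew} but keeping track of the source and target of every arrow so that each product is formed in the correct group $\cdot_\lambda$. Everything hinges on rewriting the definition of $\rightharpoonup$: since $a\rightharpoonup b = a^{-1}(a\bullet b)$ with inverse and product taken in $\cdot_{\source(a)}$, one has equivalently $a\bullet b = a\,(a\rightharpoonup b)$, where $a\rightharpoonup b\in\Gg(\source(a),\Lambda)$.

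For $(\textit{i})\!\iff\!(\textit{iii})$ I would read \eqref{qtbc} as $a\bullet(bc) = (a\bullet b)\,a^{-1}\,(a\bullet c)$ and substitute $a\bullet b = a(a\rightharpoonup b)$ everywhere: the right-hand side collapses to $a\,(a\rightharpoonup b)(a\rightharpoonup c)$ and the left-hand side to $a\,(a\rightharpoonup(bc))$, all inside $\cdot_{\source(a)}$. Cancelling the leading $a$ yields exactly $(\textit{iii})$, and since each manipulation is reversible this is an equivalence.

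For $(\textit{i})\!\Rightarrow\!(\textit{ii})$ I would compute both sides. Associativity of $\bullet$ gives $(a\bullet b)\rightharpoonup c = (a\bullet b)^{-1}\big(a\bullet(b\bullet c)\big)$. On the other side $a\rightharpoonup(b\rightharpoonup c) = a^{-1}\big(a\bullet(b^{-1}(b\bullet c))\big)$, and the preceding Lemma (part $(\textit{i})$), applied with its last argument set to $b\bullet c$, rewrites $a\bullet(b^{-1}(b\bullet c)) = a\,(a\bullet b)^{-1}\,(a\bullet(b\bullet c))$; left-multiplying by $a^{-1}$ makes the two sides coincide.

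The only delicate step is $(\textit{ii})\!\Rightarrow\!(\textit{i})$, precisely because the Lemma presupposes $(\textit{i})$ and is therefore off-limits here. I would instead run the previous calculation backwards: expanding $(\textit{ii})$ and left-multiplying by $a\bullet b$ gives $a\bullet(b\bullet c) = (a\bullet b)\,a^{-1}\,\big(a\bullet(b\rightharpoonup c)\big)$; writing $u := b\bullet c$, so that $b\rightharpoonup c = b^{-1}u$, and then setting $z := b^{-1}u$ (equivalently $u = bz$), this is literally $a\bullet(bz) = (a\bullet b)\,a^{-1}\,(a\bullet z)$, i.e.\ \eqref{qtbc} with $y = b$. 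The care needed is in the ranges: for a fixed composable $a$ one must obtain this for every $b\in\Gg(\target(a),\Lambda)$ and every $z\in\Gg(\target(a),\Lambda)$. The second is immediate, as $z = b^{-1}u$ sweeps out $\Gg(\target(a),\Lambda)$ with $u$; for the first, I note that a left associative semiloopoid satisfies $\Lambda=\im(\source)$, so every such $b$ has a composable successor $c$ with $\source(c)=\target(b)$, legitimising the instance of $(\textit{ii})$ used, while $u = b\bullet c$ ranges over all of $\Gg(\target(a),\Lambda)$ as $c$ varies, by bijectivity of $b\bullet\blank$. This recovers \eqref{qtbc} in full and closes the cycle.
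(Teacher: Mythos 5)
Your proposal is correct and follows essentially the same route as the paper's proof: the implication $(\textit{i})\Rightarrow(\textit{ii})$ via the preceding Lemma applied to $b^{-1}(b\bullet c)$, the recovery of \eqref{qtbc} from $(\textit{ii})$ by the bijective substitution $z = b^{-1}(b\bullet c)$, and the one-line cancellation of $a^{-1}$ linking $(\textit{i})$ and $(\textit{iii})$ all match the paper, up to a harmless reorganisation of the cycle. Your explicit verification of the ranges of $b$ and $z$, and of the existence of a composable successor via $\Lambda=\im(\source)$, is a welcome elaboration of details the paper compresses into the remark that the substitution is a bijective function of $c$.
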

\begin{proof}
    We first prove the implication (\textit{i})$\implies$(\textit{ii}). Observe that $(a\bullet b)a^{-1}(a\bullet b^{-1})a^{-1} = a\bullet (bb^{-1})a^{-1} = aa^{-1} = 1$, thus $(a\bullet b)^{-1} = a^{-1}(a\bullet b^{-1})a^{-1}$. Therefore we compute
    \begin{align*}
        (a\bullet b)\rightharpoonup c& = (a\bullet b)^{-1}\cdot (a\bullet b\bullet c)\\
        & = a^{-1}(a\bullet b^{-1})a^{-1} (a\bullet b\bullet c)\\
        &= a^{-1}\left( a\bullet (b^{-1}\cdot (b\bullet c))  \right)\\
        &= a\rightharpoonup (b\rightharpoonup c),
    \end{align*}
    as desired. We now prove the implication (\textit{ii})$\implies$(\textit{iii}): from (\textit{ii}) one has
    \[ a^{-1}\left( a\bullet (b^{-1}\cdot (b\bullet c))\right)  = (a\bullet b)^{-1} (a\bullet b\bullet c),\]
    thus \[ (a\bullet b)a^{-1}(a\bullet (b^{-1}(b\bullet c))) = a\bullet b\bullet c, \]
    and now it suffices to substitute $d = b^{-1}(b\bullet c)$ to retrieve the brace compatibility: this substitution is a bijective function of $c$, because $b\bullet \blank$ is bijective. Finally, we prove (\textit{iii})$\implies$(\textit{i}): condition (\textit{iii}) reads 
    \[ a^{-1}(a\bullet (bc))  = a^{-1}(a\bullet b)a^{-1}(a\bullet c),\]
    and now it suffices to cancel $a^{-1}$, to get the brace compatibility.
    \end{proof}
\begin{remark}\label{rem:bullet-as-cdot} 
    By definition of $\rightharpoonup$, one immediately has $a\bullet b = a\cdot (a\rightharpoonup b)$ and $a\cdot b = a\bullet (a\rightharpoonup\blank)^{-1}(b)$; see \cite[Remark 1.8]{guarnieri2017skew}.
\end{remark}
\begin{theorem}\label{thm:qtsb_yields_braiding}
    Let $(\Gg,\bullet)$ be a quiver-theoretic skew brace. Then $\sigma\colon a\ot b\mapsto (a\rightharpoonup b)\ot (a\leftharpoonup b)$, with $a\rightharpoonup b:= a^{-1}(a\bullet b)$ and $a\leftharpoonup b:= (a\rightharpoonup b)\backslash (a\bullet b)$,  is a left non-degenerate braiding on $\Gg$. This is involutive if and only if $\Gg$ is a quiver-theoretic brace.
\end{theorem}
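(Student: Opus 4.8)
The plan is to check directly that $\sigma$ is a pre-braiding in the sense of Definition \ref{def:braided_semiloopoid}, that it is bijective (hence a braiding) and left non-degenerate, and finally to characterise when it is involutive. The engine of the proof is Proposition \ref{skew-brace-equivalent-conditions} together with Remark \ref{rem:bullet-as-cdot}: these recast the brace compatibility as the two identities $(a\bullet b)\rightharpoonup c = a\rightharpoonup(b\rightharpoonup c)$ and $a\rightharpoonup(bc) = (a\rightharpoonup b)(a\rightharpoonup c)$, and provide the crucial translation $a\bullet b = a\cdot(a\rightharpoonup b)$ between the partial operation and the fibrewise groups. The proof closely parallels the classical skew-brace computation of Guarnieri and Vendramin \cite{guarnieri2017skew}, the only new ingredient being the careful typing of inverses and divisions in the partial setting.

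First I would dispatch the bookkeeping and the ``degenerate'' axioms. For composable $a\in\Gg(\lambda,\mu)$, $b\in\Gg(\mu,\nu)$, both $a^{-1}$ and $a\bullet b$ lie in $(\Gg(\lambda,\Lambda),\cdot_\lambda)$, so $\source(a\rightharpoonup b)=\lambda$, while the definition of $\leftharpoonup$ by left division forces $\target(a\leftharpoonup b)=\nu$ and makes $(a\rightharpoonup b)\ot(a\leftharpoonup b)$ composable; thus $\sigma$ is a morphism of $\mathsf{Quiv}_\Lambda$. The identity $(a\rightharpoonup b)\bullet(a\leftharpoonup b)=a\bullet b$ holds by construction, which is exactly \eqref{braided-commutative}. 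Axioms \eqref{bg1} and \eqref{bg2} follow by substituting the relevant units, using that $\mathbf{1}_\lambda$ is at once the $\bullet$-unit, the $\cdot_\lambda$-unit and its own $\cdot_\lambda$-inverse. Left non-degeneracy is immediate, since $a\rightharpoonup\blank$ is the composite of the semiloopoid bijection $a\bullet\blank$ with left-translation by $a^{-1}$ in $(\Gg(\lambda,\Lambda),\cdot_\lambda)$.

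The heart of the matter is the two hexagons \eqref{bg3} and \eqref{bg4}, and the strategy is to verify only the identities that compute $\rightharpoonup$ on a $\bullet$-product and then obtain the two $\leftharpoonup$-identities for free. The $\rightharpoonup$-identity of \eqref{bg4}, namely $(x\bullet y)\rightharpoonup z = x\rightharpoonup(y\rightharpoonup z)$, is precisely condition (ii) of Proposition \ref{skew-brace-equivalent-conditions}; the $\rightharpoonup$-identity of \eqref{bg3} follows by expanding $y\bullet z=y\cdot(y\rightharpoonup z)$ via Remark \ref{rem:bullet-as-cdot} and applying (ii) and (iii). For the remaining two identities I would use a cancellation trick: applying $m$ to both outputs of each hexagon collapses, by \eqref{braided-commutative} and associativity of $\bullet$, to $x\bullet y\bullet z$ on both sides; since the first components have just been shown to agree, and since $p\bullet\blank$ is injective for every arrow $p$, the second components must agree as well. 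This is the cleanest step and it sidesteps any direct manipulation of $\leftharpoonup$.

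Finally, for bijectivity I would write down the inverse explicitly, $\tau(u\ot v):=a\ot b$ with $a:=(u\bullet v)\cdot u^{-1}$ and $b:=a\backslash(u\bullet v)$, and verify $\sigma\tau=\tau\sigma=\id$ by substitution, using $u\bullet v=a\bullet b$ throughout. For the involutivity statement I would compute that the first component of $\sigma^2(a\ot b)$ equals $(a\bullet b)^{-1}\cdot a\cdot(a\bullet b)$, a conjugate of $a$ by $a\bullet b$ in $(\Gg(\lambda,\Lambda),\cdot_\lambda)$; this equals $a$ exactly when $a$ commutes with $a\bullet b$, and when it does the second component automatically returns $b$. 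As $b$ ranges over $\Gg(\target(a),\Lambda)$ the product $a\bullet b$ sweeps out all of $\Gg(\lambda,\Lambda)$, and letting $a$ range as well shows $\sigma^2=\id$ is equivalent to each fibre group $(\Gg(\lambda,\Lambda),\cdot_\lambda)$ being abelian, i.e.\@ to $\Gg$ being a quiver-theoretic brace. I expect the only real friction to be this typing discipline---keeping every inverse, left division and product in its correct group or in the semiloopoid---rather than any conceptual obstacle.
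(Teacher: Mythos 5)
Your proposal is correct and follows essentially the same route as the paper's proof: the same braiding formula, the $\rightharpoonup$-halves of the hexagons via Proposition \ref{skew-brace-equivalent-conditions} and Remark \ref{rem:bullet-as-cdot}, the same cancellation-by-left-division trick for the $\leftharpoonup$-halves (the paper likewise multiplies through and invokes uniqueness of $\backslash$), and the same conjugation criterion $(a\bullet b)^{-1}\cdot a\cdot (a\bullet b)=a$, swept over all $b$, for involutivity. Your one refinement is the explicit inverse $\tau(u\ot v)=\big((u\bullet v)\cdot u^{-1}\big)\ot\big(a\backslash(u\bullet v)\big)$, which makes bijectivity fully explicit (and quietly corrects the paper's slip, where the retrieval formula for $a$ is written as $(a\rightharpoonup b)^{-1}(a\bullet b)$ instead of $(a\bullet b)\cdot(a\rightharpoonup b)^{-1}$).
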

\begin{proof} Suppose given a quiver-theoretic skew brace on $\Gg$, and define $a\rightharpoonup b:= a^{-1}(a\bullet b)$ and $a\leftharpoonup b:= (a\rightharpoonup b)\backslash (a\bullet b)$ for all $a\ot b\in \Gg\ot \Gg$. As usual we omit the subscripts specifying the vertex. Condition \eqref{braided-commutative} holds by construction, and $a\rightharpoonup\blank$ is clearly bijective because $a\bullet \blank$ is. By Proposition \ref{skew-brace-equivalent-conditions} (\textit{ii}), one has $(a\bullet b)\rightharpoonup c = a\rightharpoonup(b\rightharpoonup c)$. This also implies \begin{equation}\label{eq:backslash-action}(a\rightharpoonup\blank)^{-1}(b\rightharpoonup c) = (a\backslash b)\rightharpoonup c.\end{equation} By Proposition \ref{skew-brace-equivalent-conditions} (\textit{iii}) and Remark \ref{rem:bullet-as-cdot}, one has
    \begin{align*}
        a\rightharpoonup (b\bullet c)& = a\rightharpoonup (b(b\rightharpoonup c))\\
        &= (a\rightharpoonup b)\cdot (a\rightharpoonup(b\rightharpoonup c))\\
        &= (a\rightharpoonup b)\cdot ((a\bullet b)\rightharpoonup c)\\
        \overset{(\dagger)}&{=} (a\rightharpoonup b)\bullet \left( ((a\rightharpoonup b)\backslash (a\bullet b))\rightharpoonup c \right)\\
        &= (a\rightharpoonup b)\bullet ((a\leftharpoonup b)\rightharpoonup c)
        ,
    \end{align*}
    where the equality marked with ($\dagger$) follows from Remark \ref{rem:bullet-as-cdot} and \eqref{eq:backslash-action}. By definition, $a\leftharpoonup (b\bullet c) = (a\rightharpoonup(b\bullet c))\backslash (a\bullet b\bullet c)$. Since
    \begin{align*}
        &(a\rightharpoonup (b\bullet c))\bullet (a\leftharpoonup b)\leftharpoonup c)\\ 
        &= (a\rightharpoonup b)\bullet ((a\leftharpoonup b)\rightharpoonup c)\bullet ((a\leftharpoonup b)\leftharpoonup c)\\
        &= (a\rightharpoonup b)\bullet (a\leftharpoonup b)\bullet c\\
        &= a\bullet b\bullet c,
    \end{align*}
    one has $(a\leftharpoonup b)\leftharpoonup c = a\leftharpoonup (b\bullet c)$. Finally, since $(a\bullet b)\leftharpoonup c:= ((a\bullet b)\rightharpoonup c)\backslash (a\bullet b\bullet c)$ and 
    \begin{align*}
        &((a\bullet b)\rightharpoonup c)\bullet (a\leftharpoonup (b\rightharpoonup c))\bullet (b\leftharpoonup c)\\
        &= (a\rightharpoonup (b\rightharpoonup c))\bullet (a\leftharpoonup (b\rightharpoonup c))\bullet (b\leftharpoonup c)\\
        &= a\bullet (b\rightharpoonup c)\bullet (b\leftharpoonup c)\\
        &= a\bullet b\bullet c,
    \end{align*}
    one also has $(a\bullet b)\leftharpoonup c = (a\leftharpoonup(b\rightharpoonup c))\bullet (b\leftharpoonup c)$. This concludes the proof of the hexagonal axioms. The axioms \eqref{bg1} and \eqref{bg2} on unital vertices are immediate. From the data $a\rightharpoonup b$ and $a\leftharpoonup b$, one can retrieve $a\bullet b = (a\rightharpoonup b)\bullet (a\leftharpoonup b)$, and thus $a = (a\bullet b)(a\rightharpoonup b)^{-1}$ and $b = a\backslash(a\bullet b)$. Therefore, $\sigma$ is bijective, and hence a braiding.
    
    The involutivity conditions are
    \begin{align*}
        &(a\rightharpoonup b)\rightharpoonup (a\leftharpoonup b) = a,\\
        &(a\rightharpoonup b)\leftharpoonup (a\leftharpoonup b) = b,
    \end{align*}
    and the former actually implies the latter, by \eqref{braided-commutative}. Therefore, the involutivity of $\sigma$ is equivalent to $(a\rightharpoonup b)\rightharpoonup (a\leftharpoonup b) = a$, which, using the definition of $\sigma$, is rewritten as
    \[ (a\bullet b)^{-1} a (a\bullet b) = a  \]
for all $a\ot b\in \Gg\ot \Gg$. Now observe that, since $a\bullet \blank$ is bijective, this is the same as $c^{-1}a c = a$ for all $a,c$ with same source: i.e., $\cdot_\lambda$ is abelian for all $\lambda$. This concludes. 
\end{proof}

\subsection{Post-semiloopoids}\label{sec:post} Just as post-groupoids correspond to skew bracoids, we now give a notion of \textit{post-semiloopoid} that corresponds to quiver-theoretic skew braces. The name \textit{post-left unital associative semiloopoid} would be more fitting for this new structure, but we ditch it for being uselessly cumbersome, in favour of a shortened version. 

The notion of post-semiloopoid will appear quite natural: it is almost identical to the definition of post-groupoid, except that a technical hypothesis is dropped.
\begin{definition}
	A \textit{post-semiloopoid}\label{def:postsemiloopoid} $(p\colon \Gg\to \Lambda, \set{\cdot_\lambda}_{\lambda\in\Lambda}, \set{1_\lambda}_{\lambda\in\Lambda}, \Phi, \triangleright)$ is the datum of a group bundle $p\colon \Gg\to \Lambda$, where the group structures on the loops $\Gg_\lambda$ are denoted by $(\Gg_\lambda, \cdot_\lambda, 1_\lambda)$; of a map $\Phi\colon \Gg\to \Lambda$; and of a map $\triangleright\colon \Gg \fibre{\Phi}{p}\Gg\to \Gg$; satisfying the conditions (\textit{ii})--(\textit{vi}) from Definition \ref{def:postgroupoid}, and satisfying moreover $\Phi(1_\lambda) = \lambda$ for all $\lambda \in \im(\Phi)$.
\end{definition}
The only difference between Definition \ref{def:postsemiloopoid} and Definition \ref{def:postgroupoid} is that we do not require $\Phi$ to be surjective any more. When we try to repeat the construction of the Grossman--Larson groupoid, starting from a post-semiloopoid, we discover that not all vertices $\lambda\in\Lambda$ are targets, and hence $\Lambda$ is naturally divided into $\Lambda_{\mathrm{un}}= \im(\Phi)$ and $\Lambda_{\mathrm{in}} = \Lambda\smallsetminus\Lambda_{\mathrm{un}}$. Thus the following result is very much expected.
\begin{theorem}
	For a left unital associative semiloopoid $(\Gg,\bullet)$ over $\Lambda = \Lambda_{\mathrm{in}}\sqcup \Lambda_{\mathrm{un}}$, the following data are equivalent:
	\begin{enumerate}
		\item a quiver-theoretic skew brace structure on $\Gg$;
		\item a post-semiloopoid $(\source\colon \Gg\to \Lambda, \set{\cdot_\lambda}_{\lambda\in\Lambda}, \set{\mathbf{1}_\lambda}_{\lambda\in\Lambda}, \target, \triangleright)$ such that $x\bullet y = x\cdot_{\source(x)}(x\triangleright y)$ for all $x\ot y\in \Gg^{\ot 2}$.
	\end{enumerate}
\end{theorem}
\begin{proof}
	We first perform the construction from (\textit{i}) to (\textit{ii}). Given a quiver-theoretic skew brace, we define $p = \source$, $\Phi = \target$, on $p\colon \Gg\to \Lambda$ we put the group bundle structure given by the group operations $\cdot_\lambda$ of the quiver-theoretic skew brace, and finally we set $x\triangleright y = x\backslash_{\source(x)}(x\bullet y)$ for all $x\ot y \in \Gg^{\ot 2} = \Gg\fibre{\Phi}{p}\Gg$. By construction, $\Phi(1_\lambda) = \lambda$ if and only if $\lambda\in\Lambda_{\mathrm{un}}$. Since $\source(x\bullet y) = \source(x)$ and $\target(x\bullet y) = \target(y)$, one clearly has $p(x\triangleright y) = \source(x)$ and $\Phi(x\cdot_{\source(x)}(x\triangleright y)) = \Phi(x\bullet y) = \Phi(y)$ whenever these expressions are defined, thus proving conditions (\textit{ii}) and (\textit{iii}) from Definition \ref{def:postgroupoid}. Conditions (\textit{iv}) and (\textit{v}) follow from Proposition \ref{skew-brace-equivalent-conditions}. If $x\triangleright y = z$, then $y$ is retrieved as $y = x\backslash_{\source(x)} (z\bullet (x\triangleright y))$, and hence $x\triangleright\blank\colon \Gg(\target(x),\Lambda)\to \Gg(\source(x),\Lambda)$ is bijective, thus proving the condition (\textit{vi}) from Definition \ref{def:postgroupoid}.
	
	We now perform the converse construction from (\textit{ii}) to (\textit{i}). We set $\source = p$ and $\target = \Phi$. The group structures $\cdot_\lambda$ on $\Gg(\lambda,\Lambda)$ are defined as the group bundle operations $\cdot_\lambda$ of the post-semiloopoid, and the semiloopoid structure $\bullet $ is defined as in the Grossman--Larson groupoid, as $x\bullet y  = x\cdot_{p(x)}(x\triangleright y)$. The associativity of this operation follows exactly as in \cite[Theorem 2.16]{sheng2024postgroupoidsquivertheoreticalsolutionsyangbaxter}. For every $\lambda\in \im(\Phi)$, $y\in \Gg(\lambda,\Lambda)$ and $x\in \Gg(\Lambda, \lambda)$, one has
	\[ x\triangleright (1_\lambda\cdot_\lambda y) = (x\triangleright 1_\lambda)\cdot_{\source(x)} (x\triangleright y), \]
	and since $\cdot_\lambda$ is a group operation this implies $x\triangleright 1_\lambda = 1_{\source(x)}$. Thus $x\bullet 1_\lambda = x\cdot_{\source(x)} 1_{\source(x)} = x$, and this proves that $\Gg$ is unital. Finally, because $\Gg$ is a left unital associative semiloopoid and $\triangleright$ satisfies  Proposition \ref{skew-brace-equivalent-conditions} (\textit{iii}), we conclude that $\Gg$ is a quiver-theoretic skew brace.
\end{proof}
%%%%%%%%%%%%%%%%%%%%%%%%%%%%%%%%%

\section{Dynamical skew braces}\noindent \label{sec:dbraces}In this section, we recall the definition of dynamical sets, the dynamical Yang--Baxter equation, and dynamical skew braces. We shall observe that every dynamical skew brace is a quiver-theoeretic skew brace. In particular, zero-symmetric dynamical skew braces produce skew bracoids, and thus braided groupoids.
\subsection{Dynamical sets and the dynamical \textsc{ybe}} The dynamical Yang--Baxter equation (\textsc{dybe}), in its Lie-theoretic form, was introduced by Etingof and Schiffmann \cite{etingof2001lectures}. A purely set-theoretic version of the \textsc{dybe} was introduced by Shibukawa \cite{shibu}. Throughout this paper, ``\textsc{dybe}'' means Shibukawa's set-theoretic \textsc{dybe}.
\begin{definition}[Shibukawa \cite{shibu}]\label{dynamical_set}
A \emph{dynamical set} over a nonempty set $\Lambda$ is the datum of a set $X$, and of a map $\phi\colon \Lambda\times X\to \Lambda$ (sometimes called the \emph{transition map}).

Let $(X,\phi)$ and $(Y,\psi)$ be dynamical sets over $\Lambda$. A morphism $(X,\phi)\to (Y,\psi)$ of dynamical sets over $\Lambda$ is a family of maps $\set{f_\lambda\colon X\to Y}_{\lambda\in \Lambda}$, satisfying $\psi (\lambda, f_\lambda(x))= \phi(\lambda,x)$.
\end{definition}
With the above maps, dynamical sets over $\Lambda$ form a category, which we denote by $\mathsf{Set}_\Lambda$, following \cite{ashikaga2022dynamical}. The notation $\mathsf{DSet}_\Lambda$ is also used in literature \cite{matsumotoshimizu}. This is known to be a monoidal category, with the tensor product $\otimes$ of dynamical sets being defined as follows: $(X,\phi)\otimes (Y,\psi) $ is the set $X\times Y$ endowed with the transition map $\Lambda\times X\times Y\to \Lambda$, $(\lambda, x,y)\mapsto \psi(\phi(\lambda,x),y)$. A unit object is provided by a singleton $\set{*}$ with transition map $(\lambda,*)\mapsto \lambda$. For details about the associativity and unit constraints, we redirect the reader to \cite{matsumotoshimizu}.

\begin{remark}\hspace{-6pt}\footnote{This was pointed out by A.\@ Ardizzoni, to whom the author expresses his gratitude.}\hspace{6pt}\label{rem:slice} For a category $\Cc$ and an object $C\in\Obj(\Cc)$, the \emph{slice category} $\Cc/C$ is defined as the category whose objects are pairs $(X,f)$ where $X\in\Obj(\Cc)$ and $f$ is a morphism $X\to C$; and whose morphisms $(X,f)\to (Y,g)$ are the morphisms $h\colon X\to Y$ satisfying $f = gh$; see e.g.\@ \cite{riehl-context}.

Let $(X,\phi)$ be a dynamical set over $\Lambda$, and let $E:= \Lambda^\Lambda$ be the monoid of endofunctions of $\Lambda$; then, the transition map $\phi\colon \Lambda\times X\to \Lambda$ is uniquely identified by a map $\tilde{\phi}\colon X\to  E$, $x\mapsto \phi(\blank,x)$. There is clearly a bijective correspondence between the objects of $\mathsf{Set}_\Lambda$ and the objects of $\mathsf{Set}/E$. Every morphism $h\colon X\to Y$ in $\mathsf{Set}/E$ becomes a morphism in $\mathsf{Set}_\Lambda$ by $f_\lambda := h$ for all $\lambda$. %It is easy to get convinced that $\mathsf{Set}_\Lambda$ has, in general, strictly more morphisms than $\mathsf{Set}/E$.

It is known that every monoid structure $m\colon E\times E\to E$ on $E$ induces a monoidal structure $\ot_m$ on $\mathsf{Set}/E$ by
    \[ (X,\tilde{\phi})\ot_m (Y,\tilde{\psi}) := (X\times Y, m(\tilde{\phi}\times \tilde{\psi})).\]
    Choosing $m(f,g) := f\circ g$, one has that $\ot_m$ is the restriction of the monoidal structure $\ot$ of $\mathsf{Set}_\Lambda$. Thus $(\mathsf{Set}/E, \ot_m)$ is isomorphic to a wide (but non-full for $|\Lambda|>1$) monoidal subcategory of $(\mathsf{Set}_\Lambda, \ot)$.
\end{remark}
\begin{counterexample}
    Let $\Lambda = \set{\lambda,\mu}$, let $X = \set{x,y}$, let $\phi(\lambda,x) = \phi(\lambda,y) = \mu$, $\phi(\mu,x ) = \lambda$, $\phi(\mu, y) = \mu$, and consider the morphism of dynamical sets $f\colon (X,\phi)\to (X,\phi)$ defined by $f_\lambda(x) = y$, $f_\lambda(y) = x$, $f_\mu = \id_X$. It is clear that $f$ is not a morphism in $\mathsf{Set}/E$.
\end{counterexample}

We shall henceforth indicate a dynamical set $(X,\phi)$ by simply $X$.
\begin{definition}[Shibukawa \cite{shibu}]
    A \emph{solution to the} \textsc{dybe}, or \emph{dynamical Yang--Baxter map}, on a dynamical set $(X,\phi)$ over $\Lambda$ is a morphism $\sigma \colon X\ot X\to X\ot X$ of dynamical sets over $\Lambda$, satisfying the braid relation \eqref{eq:ybe}.
\end{definition}
An object $X$ of a monoidal category $(\Cc,\ot ,\One)$, endowed with a solution of the braid relation $\sigma\colon X\ot X\to X\ot X$, is usually called a \emph{braided object}, and $\sigma$ is called a \emph{braiding operator}. This nomenclature becomes somewhat problematic in the category of groups $\mathsf{Gp}$, and of groupoids $\mathsf{Gpd}_\Lambda$ over a set $\Lambda$. Indeed, in order to call a group(oid) ``braided'' it is generally required that the map $\sigma$ obeys the stronger conditions of Definition \ref{braided_groupoid}. For this reason, we shall rather speak of ``Yang--Baxter maps'' when we do not imply any stronger property.

Recall that the braided objects of a monoidal category $(\Cc,\ot,\One)$ form a subcategory $\mathsf{Br}(\Cc)$, whose morphisms are the morphisms $f\colon X\to Y$ in $\Cc$ such that $f\ot f$ intertwines the respective braidings $\sigma_X$ and $\sigma_Y$.
\begin{remark}
    When $\Lambda$ consists of a single element, the category $\mathsf{Br}(\mathsf{Set}_\Lambda)$ is isomorphic to the category $\mathsf{Br}(\mathsf{Set})$ of the solutions to the set-theoretic \textsc{ybe}.
\end{remark}
\begin{proposition}[{Matsumoto and Shimizu \cite{matsumotoshimizu}}] \label{monoidal_functor_DSet_to_Quiv}
There exists a fully faithful strong monoidal functor $\Quiv\colon \mathsf{Set}_\Lambda\to \mathsf{Quiv}_\Lambda$, given by sending $(X,\phi)$ to the quiver $\Quiv(X,\phi)=\Lambda\times X$, $\source(\lambda,x)=\lambda$, $\target(\lambda,x)=\phi(\lambda,x)$;
and sending the morphism $f =\set{f_\lambda\colon X\to Y}_{\lambda\in\Lambda}$ to the morphism $\Quiv(f)\colon \Lambda\times X\to \Lambda\times Y$, $(\lambda,x)\mapsto (\lambda, f_\lambda(x))$.

In particular, there is a fully faithful functor $\mathsf{Br}(\mathsf{Set}_\Lambda) \to \mathsf{Br}(\mathsf{Quiv}_\Lambda)$; and hence a dynamical Yang--Baxter map on a dynamical set can be interpreted as a Yang--Baxter map on a quiver.
\end{proposition}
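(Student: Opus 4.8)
The plan is to verify in turn that $\Quiv$ is a well-defined functor, that it is fully faithful, and that it is strong monoidal; the braided statement then follows formally. First I would confirm that $\Quiv$ lands in $\mathsf{Quiv}_\Lambda$ and is defined on morphisms. On $f = \set{f_\lambda}_{\lambda\in\Lambda}\colon (X,\phi)\to(Y,\psi)$, the map $\Quiv(f)\colon (\lambda,x)\mapsto(\lambda,f_\lambda(x))$ visibly preserves sources, while its compatibility with targets, namely $\psi(\lambda,f_\lambda(x)) = \phi(\lambda,x)$, is \emph{exactly} the defining condition of a morphism of dynamical sets. Functoriality is then immediate, because composition in $\mathsf{Set}_\Lambda$ is given by $(g\circ f)_\lambda = g_\lambda\circ f_\lambda$ and $\Quiv(\id)=\id$. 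Faithfulness is clear, since $\Quiv(f)=\Quiv(f')$ forces $f_\lambda(x)=f'_\lambda(x)$ for all $\lambda,x$; and for fullness, given $g\colon\Quiv(X,\phi)\to\Quiv(Y,\psi)$ over $\Lambda$, source-preservation forces $g(\lambda,x)=(\lambda,y)$ for a unique $y$, so that setting $f_\lambda(x):=y$ produces a preimage whose target-preservation $\psi(\lambda,f_\lambda(x))=\phi(\lambda,x)$ is precisely the dynamical-set morphism condition.

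For the strong monoidal structure I would exhibit the tensor coherence isomorphism by hand. The arrows of $\Quiv(X,\phi)\ot\Quiv(Y,\psi)$ are the composable pairs $((\lambda,x),(\mu,y))$ with $\phi(\lambda,x)=\mu$; since the second entry is then determined by the first, the assignment $((\lambda,x),(\phi(\lambda,x),y))\mapsto(\lambda,x,y)$ is a bijection onto the arrow-set $\Lambda\times X\times Y$ of $\Quiv((X,\phi)\ot(Y,\psi))$, and it preserves source ($\lambda\mapsto\lambda$) and target (both sides equal $\psi(\phi(\lambda,x),y)$). Naturality follows by unwinding the tensor product of morphisms in $\mathsf{Set}_\Lambda$, namely $(u\ot v)_\lambda(x,y)=(u_\lambda(x),v_{\phi(\lambda,x)}(y))$, and is a routine check. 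For the unit, the unit object of $\mathsf{Set}_\Lambda$ is $(\set{*},(\lambda,*)\mapsto\lambda)$, which $\Quiv$ sends to the bundle $\Lambda\times\set{*}$ with $\source=\target=\id_\Lambda$; this is \emph{on the nose} the monoidal unit $\One_\Lambda$. The pentagon and triangle coherences for $\Quiv$ then reduce, via MacLane coherence, to those of $\mathsf{Quiv}_\Lambda$.

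The braided statement is then formal: a strong monoidal functor sends a braided object $(X,\sigma)$ to $(\Quiv X,\tilde\sigma)$, where $\tilde\sigma$ is $\Quiv(\sigma)$ conjugated by the tensor coherence isomorphism, and it preserves \eqref{eq:ybe} because it commutes with the building blocks $\sigma\ot\id$, $\id\ot\sigma$ and their composites up to coherence. Since $\Quiv$ is moreover fully faithful, the induced functor $\mathsf{Br}(\mathsf{Set}_\Lambda)\to\mathsf{Br}(\mathsf{Quiv}_\Lambda)$ is again fully faithful: on hom-sets it is the restriction to intertwiners of the bijection $\mathsf{Set}_\Lambda(X,Y)\cong\mathsf{Quiv}_\Lambda(\Quiv X,\Quiv Y)$, and faithfulness of $\Quiv$ lets one reflect the intertwining identity back through $\Quiv$.

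I expect the only genuinely delicate point to be the bookkeeping of coherence isomorphisms in the last step---checking that conjugating $\Quiv(\sigma)$ by the tensor isomorphism truly yields a solution of \eqref{eq:ybe} on $\Quiv X$, and that the intertwining condition is correctly reflected. Everything else is a direct unwinding of the definitions, the crux being the single observation that the target-compatibility of a morphism of dynamical sets is literally the target-preservation demanded of a morphism in $\mathsf{Quiv}_\Lambda$.
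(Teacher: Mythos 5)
Your proposal is correct, and there is nothing in the paper to compare it against: the proposition is stated with a citation to Matsumoto and Shimizu and no proof is given, the paper only adding the informal gloss that every dynamical set ``can be seen as a quiver''. Your direct verification is the evident intended argument, and you correctly identify the crux --- that the target-compatibility $\psi(\lambda,f_\lambda(x))=\phi(\lambda,x)$ of a morphism of dynamical sets is literally the target-preservation condition in $\mathsf{Quiv}_\Lambda$, which simultaneously gives well-definedness and fullness --- as well as the one genuinely structural point, the canonical bijection $\Quiv(X,\phi)\ot\Quiv(Y,\psi)\cong\Quiv\bigl((X,\phi)\ot(Y,\psi)\bigr)$ collapsing the redundant middle vertex, with your formula $(u\ot v)_\lambda(x,y)=(u_\lambda(x),v_{\phi(\lambda,x)}(y))$ being exactly the tensor of morphisms forced by the transition maps.
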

In other words, every dynamical set can be seen as a quiver in the following way: given $(X,\phi)$, we construct the quiver $\Quiv(X,\phi)$ where each vertex $\lambda\in \Lambda$ has outgoing arrows labelled by $X$. The target of the arrow starting from $\lambda\in\Lambda$ and labelled by $x\in X$ is defined to be $\phi(\lambda,x)$. With this interpretation in mind, the following becomes straightforward:
\begin{lemma}[{Matsumoto and Shimizu \cite{matsumotoshimizu}}]\label{essential_image_of_Q}
A quiver $Q$ over $\Lambda$ lies in the essential image of $\Quiv$ if and only if the cardinality $n=|\source^{-1}(\lambda)|$ is independent of $\lambda\in \Lambda$. In this case, $Q$ is $\Quiv(X,\phi)$ for some dynamical set $(X,\phi)$ with $|X| = n$.
\end{lemma}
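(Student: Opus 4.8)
The plan is to prove both implications directly: the forward (``only if'') direction by inspecting the definition of $\Quiv$, and the converse by an explicit construction of a dynamical set together with an isomorphism of quivers over $\Lambda$.

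For the forward implication, suppose $Q$ lies in the essential image, so that there is an isomorphism $g = (g^1,\id_\Lambda)\colon Q\to \Quiv(X,\phi)$ in $\mathsf{Quiv}_\Lambda$ for some dynamical set $(X,\phi)$. First I would record that in $\Quiv(X,\phi)=\Lambda\times X$ one has $\source^{-1}(\lambda)=\set{(\lambda,x)\mid x\in X}$, so that $|\source^{-1}(\lambda)|=|X|$ for every $\lambda$. Since $g^1$ preserves sources and $g^0=\id_\Lambda$, its arrow component restricts to a bijection $\source_Q^{-1}(\lambda)\to \source_{\Quiv(X,\phi)}^{-1}(\lambda)$ for each $\lambda$ (the inverse $g^{-1}$ preserving sources too); hence $|\source_Q^{-1}(\lambda)|=|X|$ is independent of $\lambda$, as claimed.

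For the converse, set $n:=|\source^{-1}(\lambda)|$. This is a positive cardinal: were it $0$, then $Q=\bigsqcup_{\lambda}\source^{-1}(\lambda)=\varnothing$, contradicting that the arrow set of a quiver is nonempty; in particular every vertex is a source, so $\Lambda=\im(\source)$, consistently with Convention \ref{conv:no-isolated-vertices}. I would then fix any set $X$ with $|X|=n$ and, invoking the axiom of choice when $\Lambda$ is infinite, choose for each $\lambda\in\Lambda$ a bijection $b_\lambda\colon X\to Q(\lambda,\Lambda)=\source^{-1}(\lambda)$. Setting $\phi(\lambda,x):=\target(b_\lambda(x))$ produces a dynamical set $(X,\phi)$ over $\Lambda$, and I define $f=(f^1,\id_\Lambda)\colon \Quiv(X,\phi)\to Q$ by $f^1(\lambda,x):=b_\lambda(x)$. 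This is a morphism over $\Lambda$ because, by construction, $\source_Q(b_\lambda(x))=\lambda$ and $\target_Q(b_\lambda(x))=\phi(\lambda,x)$, which are precisely the source and target of $(\lambda,x)$ in $\Quiv(X,\phi)$. As the arrows of both quivers decompose as the disjoint union over $\lambda$ of their source-fibres, and $f^1$ restricts to the bijection $b_\lambda$ on the $\lambda$-fibre, the map $f^1$ is a bijection; its set-theoretic inverse is again a morphism over $\Lambda$ since $f^1$ preserves source and target. Hence $f$ is an isomorphism in $\mathsf{Quiv}_\Lambda$, giving $Q\cong\Quiv(X,\phi)$ with $|X|=n$.

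The argument is essentially bookkeeping of fibres, so I do not anticipate a genuine obstacle. The only point meriting care is the simultaneous choice of the bijections $\set{b_\lambda}_{\lambda\in\Lambda}$ for infinite $\Lambda$, which is exactly where the equicardinality hypothesis $|\source^{-1}(\lambda)|\equiv n$ enters and makes the construction of $\phi$ possible.
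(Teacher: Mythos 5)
Your proposal is correct and takes essentially the same route as the paper, which states the lemma as ``straightforward'' from the labelling interpretation of $\Quiv(X,\phi)$ (each vertex $\lambda$ carries outgoing arrows labelled by $X$, with targets given by $\phi$): your bijections $b_\lambda$ and the definition $\phi(\lambda,x):=\target_Q(b_\lambda(x))$ are exactly that construction made explicit, and the fibrewise verification of the isomorphism over $\Lambda$, together with the observation that nonemptiness of $Q$ forces $n\geq 1$, fills in precisely the bookkeeping the paper leaves to the reader.
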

Notice that a complete quiver of degree $d$ always lies in the essential image of $\Quiv$.
\begin{notation}\label{notation:paths_on_quivers_of_dsets}
We introduce a notation for the arrows and paths in $\Quiv(X,
\phi)$. The unique arrow with source $\lambda$ and label $x$ will be denoted by $[\lambda \Vert x]$, and the path starting from $\lambda$ whose consecutive arrows are labelled $x_1,\dots, x_n$ will be denoted by $[\lambda\Vert  x_1|\dots| x_n]$.
\end{notation}
\subsection{Dynamical skew braces} In this section, we recall the definition of dynamical skew (left) braces: these algebraic structures are known to provide examples of dynamical Yang--Baxter maps. We shall prove, here, that their associated quivers are left unital associative semiloopoids (which will turn out to be braided), and they are moreover groupoids if the dynamical skew brace is zero-symmetric.
\begin{definition}[Anai \cite{anai2021thesis}, generalising Matsumoto \cite{matsu}]\label{dynamical_skew_brace}
    A \emph{dynamical skew} (\emph{left}) \emph{brace} $(A,\Lambda, \phi, \cdot, \set{\bullet_\lambda}_{\lambda\in\Lambda})$ is the datum of a dynamical set $(A,\phi)$ over $\Lambda$, a group structure $(A,\cdot)$ on $A$, and a family of left quasigroup structures $\set{(A,\bullet_\lambda)}_{\lambda\in\Lambda}$ on $A$, such that the following conditions hold for all $\lambda\in\Lambda$, $a,b,c\in A$:
		\begin{align}&\label{eq:dyn-associativity}\tag{\sc da}a\bullet_{\lambda}(b\bullet_{\phi(\lambda,a)} c)=(a\bullet_{\lambda} b)\bullet_{\lambda} c,\\
		\label{eq:brace-compatibility}\tag{\sc bc}&a\bullet_\lambda(b\cdot c) = (a\bullet_\lambda b)\cdot a^{-1}\cdot( a\bullet_\lambda c),\end{align}
		where $a^{-1}$ denotes the inverse with respect to $\cdot$. We denote $(A,\Lambda, \phi, \cdot, \set{\bullet_\lambda}_{\lambda\in\Lambda})$ simply by $A$, when the additional structures are understood.

  A \emph{morphism} $A \to B$ of dynamical skew braces over $\Lambda$ is a morphism $f=\set{f_\lambda}_{\lambda\in\Lambda}\colon (A,\phi_A)\to (B,\phi_B)$ of dynamical sets, such that $f_\lambda(a\bullet_\lambda b) = f_\lambda(a)\bullet_\lambda f_{\phi(\lambda,a)}(b)$, $f_\lambda(\mathbf{1}_\lambda)= \mathbf{1}_\lambda$ and $f_\lambda(a\cdot b) = f_\lambda(a)\cdot f_\lambda(b)$ for all $\lambda\in\Lambda$. We denote by $\mathsf{DSB}_\Lambda$ the category of dynamical skew braces over $\Lambda$.

A \emph{dynamical brace}, or a dynamical skew brace \emph{of abelian type}, is a dynamical skew brace $A$ with $(A,\cdot)$ abelian.
\end{definition}
Condition \eqref{eq:dyn-associativity} is also called the \emph{dynamical associativity}, and \eqref{eq:brace-compatibility} is the usual \emph{brace compatibility}. For the rest of this section, let $\backslash_\lambda$ be the left division with respect to $\bullet_\lambda$.
\begin{remark}\label{Matsumoto_gave_a_different_definition}
    The definition of a dynamical brace given by Matsumoto is slightly different from ours. However, the two definitions are equivalent by \cite[Proposition 3.3]{matsu}.
\end{remark}
\begin{remark}\label{computation_rules_for_braces}
Since $\bullet_\lambda$ satisfies the brace compatibility \eqref{eq:brace-compatibility}, the following computation rules are known to hold (see e.g.\@ the survey \cite{vendramin2023skew}):
\begin{equation}\label{eq:brace-computation-rules}
a\bullet_\lambda (b^{-1}\cdot c)= a\cdot (a\bullet_{\lambda} b)^{-1}\cdot (a\bullet_{\lambda} c),\quad a\bullet_{\lambda}(b\cdot c^{-1})=(a\bullet_{\lambda} b)\cdot (a\bullet_{\lambda} c)^{-1}\cdot a.
\end{equation}
\end{remark}
We shall henceforth drop the $\cdot$ from the notation.

In the case of a dynamical brace, the Matsumoto--Shimizu functor $\Quiv$ takes an explicit form, involving the group $A$ and its \emph{holomorph} $\Hol(A)$. We recall that the holomorph of a group is defined as $\Hol(A) := A\rtimes \Aut(A)$, where the action of $\Aut(A)$ on $A$ is the evaluation $\Aut(A)\times A\to A$, $(f, a)\mapsto f(a)$.
\begin{definition}
    Let $(A,\cdot)$ be a group which is also a dynamical set $(A, \phi)$ over $\Lambda$. We say that $A$ is a \textit{dynamical group of bundle type} if the \textit{action rule}
    \begin{equation}\label{action_rule}\tag{\sc ar}
   	\phi(\lambda, ab) = \phi( \phi(\lambda, a), b)
    \end{equation}
    is satisfied for all $a,b\in A$, $\lambda\in\Lambda$. 
\end{definition}
\begin{remark}
The Matsumoto--Shimizu functor clearly takes dynamical groups of bundle type into bundles of groups, where each group of loops is isomorphic to $(A,\cdot)$. Indeed, \eqref{action_rule} is equivalent to the request that the group multiplication $m\colon A\times A\to A$ induces a morphism of dynamical sets $\set{m_\lambda = m}_{\lambda\in\Lambda}\colon A\ot A\to A$. \end{remark}
Observe that if $(A,\cdot)$ is a group and $(A,\phi)$ is a dynamical set over $\Lambda$, then $\Hol(A)$ inherits the structure of a dynamical set by simply disregarding the second entry:
\[ \phi_{\Hol(A)}(\lambda, (a,f)):= \phi(\lambda,a). \]
Clearly, $\Hol(A)$ is a dynamical group of bundle type if and only if $A$ is. 
\begin{definition}[{see \cite[\S4]{matsu}}]\label{dynamical_subgroup}
	Let $A$ be a dynamical group of bundle type. A \emph{dynamical subgroup} of $\Hol(A)$ is a family $\Ss=\set{S_\lambda}_{\lambda\in\Lambda}$ of subsets $\emptyset\neq S_\lambda\subseteq \Hol(A)$ such that, for all $(a,f)\in S_\lambda$, one has $(a,f)^{-1}S_\lambda =  S_{\phi(\lambda, a)}$.
	
		 Notice that this implies $\phi(\lambda,1_A)=\lambda$. Observe that a dynamical subgroup of $\Hol(A)$ is \emph{not} a subgroup of $\Hol(A)$, unless $|\Lambda| = 1$. 
		 
		 We say that the dynamical subgroup is \emph{unital} if $1_A\in S_\lambda$ for all $\lambda$.
	
	We say that a subset $S\subseteq \Hol(A)$ is \emph{regular} if the projection on $A$ restricts to a bijection $S\to A$. A dynamical subgroup $\Ss = \set{S_\lambda}_{\lambda\in\Lambda}$ of $\Hol(A)$ is said to be \emph{regular} if $S_\lambda$ is a regular subset of $\Hol(A)$ for all $\lambda\in \Lambda$.
\end{definition}
\begin{proposition}[Matsumoto \cite{matsu}, Anai \cite{anai2021thesis}]\label{dynamical_braces_and_dynamical_subgroups}
Let $(A,\phi)$ be a dynamical set over $\Lambda$ which is also a group. There is a bijective correspondence between:
\begin{enumerate}
    \item families of left quasigroup structures $\set{\bullet_\lambda}_{\lambda\in\Lambda}$ such that $(A,\Lambda,\phi,\cdot,\set{\bullet_\lambda}_{\lambda\in\Lambda})$ is a dynamical skew brace;
    \item regular dynamical subgroups $\Ss$ of $\Hol(A)$.
\end{enumerate}
The correspondence is as follows:
\begin{enumerate}
    \item[(\textit{i}$\,$$\rightarrow$$\,$\textit{ii})] Given a family $\set{\bullet_\lambda}_{\lambda\in\Lambda}$, set $f^\lambda_a\colon b\mapsto a^{-1}(a\bullet_\lambda b)$, and $\Ss=\set{S_\lambda}_{\lambda\in\Lambda}$ with $S_\lambda:=\set{(a, f^\lambda_a)\mid a\in A}$.
    \item[(\textit{ii}$\,$$\rightarrow$$\,$\textit{i})] Given $\Ss=\set{S_\lambda}_{\lambda\in\Lambda}$ regular, for all $\lambda\in \Lambda$ and for all $a\in A$ there exists a unique automorphism $f^\lambda_a$ such that $(a,f^\lambda_a)\in S_\lambda$. Then, set $a\bullet_\lambda b := af^\lambda_a(b) $.
\end{enumerate}
The brace compatibility on $\bullet_\lambda$ is equivalent to $f^\lambda_a$ being a group automorphism of $(A,\cdot)$. The dynamical associativity on the family $\set{\bullet_\lambda}_{\lambda\in\Lambda}$ is equivalent to the family $\Ss$ being a dynamical subgroup, which is regular by definition.
\end{proposition}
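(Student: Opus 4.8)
The plan is to exhibit the two assignments explicitly, check that each is well defined and that they are mutually inverse, and then reduce every defining axiom of one side to a defining axiom of the other. I would work throughout in $\Hol(A)=A\rtimes\Aut(A)$, where $(a,f)(b,g)=(a\,f(b),\,f\circ g)$ and $(a,f)^{-1}=(f^{-1}(a^{-1}),\,f^{-1})$, keeping the abbreviation $f^\lambda_a\colon b\mapsto a^{-1}(a\bullet_\lambda b)$ from the statement.

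\emph{The easy half.} In the direction (\textit{i})$\to$(\textit{ii}), the brace compatibility \eqref{eq:brace-compatibility} gives
\[ f^\lambda_a(bc)=a^{-1}\bigl(a\bullet_\lambda(bc)\bigr)=a^{-1}(a\bullet_\lambda b)\cdot a^{-1}(a\bullet_\lambda c)=f^\lambda_a(b)\,f^\lambda_a(c), \]
and $f^\lambda_a$ is a bijection because $a\bullet_\lambda\blank$ is; hence $f^\lambda_a\in\Aut(A)$ and $S_\lambda=\set{(a,f^\lambda_a)\mid a\in A}$ is regular by construction. In the direction (\textit{ii})$\to$(\textit{i}), a regular $S_\lambda$ supplies a unique $f^\lambda_a\in\Aut(A)$ for each $a$, and setting $a\bullet_\lambda b:=a\,f^\lambda_a(b)$ yields a left quasigroup operation (left translation is $a\cdot f^\lambda_a(\blank)$, a bijection); reversing the computation above shows $f^\lambda_a\in\Aut(A)$ forces \eqref{eq:brace-compatibility}. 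This already proves the first asserted equivalence, and the two assignments are visibly inverse: from $\bullet_\lambda$ one recovers $a\,f^\lambda_a(b)=a\,a^{-1}(a\bullet_\lambda b)=a\bullet_\lambda b$, and from $S_\lambda$ one recovers $a^{-1}(a\,f^\lambda_a(b))=f^\lambda_a(b)$.

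\emph{The core.} The substance is the second asserted equivalence: that \eqref{eq:dyn-associativity} corresponds to $\set{S_\lambda}$ being a dynamical subgroup, i.e.\@ to the coset condition $(a,f^\lambda_a)^{-1}S_\lambda=S_{\phi(\lambda,a)}$. I would compute, for $(b,f^\lambda_b)\in S_\lambda$,
\[ (a,f^\lambda_a)^{-1}(b,f^\lambda_b)=\bigl((f^\lambda_a)^{-1}(a^{-1}b),\,(f^\lambda_a)^{-1}\circ f^\lambda_b\bigr), \]
and note that the first coordinate equals $c:=a\backslash_\lambda b$, since $a\bullet_\lambda c=b$ reads $f^\lambda_a(c)=a^{-1}b$. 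As $b$ runs over $A$ so does $c$, so the coset condition holds exactly when $(f^\lambda_a)^{-1}\circ f^\lambda_b=f^{\phi(\lambda,a)}_{c}$ for all $a,b$; writing $b=a\bullet_\lambda c$ this is the cocycle identity $f^\lambda_{a\bullet_\lambda c}=f^\lambda_a\circ f^{\phi(\lambda,a)}_{c}$. Finally I would expand \eqref{eq:dyn-associativity} through $a\bullet_\lambda b=a\,f^\lambda_a(b)$: both sides begin with $a\,f^\lambda_a(b)$, and cancelling it leaves precisely $f^\lambda_a\circ f^{\phi(\lambda,a)}_b=f^\lambda_{a\bullet_\lambda b}$. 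Thus \eqref{eq:dyn-associativity}, the cocycle identity, and the coset condition coincide.

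The main obstacle is the bookkeeping of the transition map in this last step: one must verify that the index $\phi(\lambda,a)$ manufactured by the holomorph product is the same index appearing as the quasigroup subscript in \eqref{eq:dyn-associativity}, so that the coset lands in the fibre $S_{\phi(\lambda,a)}$ and not in another. This is where the compatibility of $\phi$ recorded in Definition \ref{dynamical_subgroup} (that $\phi$ is a right action of $(A,\cdot)$ with $\phi(\lambda,1_A)=\lambda$) enters as the shared prerequisite of the two sides; on the brace side it is consistent with the right-unit identity $a\bullet_\lambda 1_A=a$, itself a one-line consequence of \eqref{eq:brace-compatibility} (put $b=c=1_A$ and cancel). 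Once the indices are aligned, the regularity of $\Ss$ is automatic and the equivalence of (\textit{i}) and (\textit{ii}) follows, together with the two equivalences stated at the end of the proposition.
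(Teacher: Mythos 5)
The paper offers no proof of this proposition---it is imported from Matsumoto \cite{matsu} and Anai \cite{anai2021thesis}---so your argument is a self-contained verification rather than a parallel to anything in the text, and its algebraic core is correct. The holomorph formulas, the equivalence between \eqref{eq:brace-compatibility} and multiplicativity of $f^\lambda_a$, the mutual inverseness of the two assignments, and the chain ``\eqref{eq:dyn-associativity} $\iff$ cocycle identity $f^\lambda_{a\bullet_\lambda b}=f^\lambda_a\circ f^{\phi(\lambda,a)}_b$ $\iff$ coset condition $(a,f^\lambda_a)^{-1}S_\lambda=S_{\phi(\lambda,a)}$'' are all sound; in particular, identifying the first coordinate of $(a,f^\lambda_a)^{-1}(b,f^\lambda_b)$ with $a\backslash_\lambda b$ and then invoking regularity of $S_{\phi(\lambda,a)}$ to match second coordinates is exactly what turns the set equality into the cocycle identity. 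One dependence worth making explicit: extracting the cocycle identity from \eqref{eq:dyn-associativity} expands $f^\lambda_a\bigl(b\,f^{\phi(\lambda,a)}_b(c)\bigr)$ multiplicatively, so that equivalence holds \emph{in the presence of} \eqref{eq:brace-compatibility}; this is consistent with the proposition's phrasing, which disposes of the brace compatibility first, but you should say it.

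The one genuine loose end is in your final paragraph. You assert that the right-action property of $\phi$ ``enters as the shared prerequisite of the two sides'', but Definition \ref{dynamical_skew_brace}, read literally, imposes no such condition on the brace side: \eqref{eq:dyn-associativity} and \eqref{eq:brace-compatibility} see $\phi$ only through the subscript $\bullet_{\phi(\lambda,a)}$, and they do not imply the compatibility $\phi(\lambda,a\bullet_\lambda b)=\phi(\phi(\lambda,a),b)$ that your coset computation needs in order to compare $(a,f^\lambda_a)^{-1}S_\lambda$ with the fibre at the correct vertex, and that Definition \ref{dynamical_subgroup} demands of a dynamical subgroup. Concretely, take $\bullet_\lambda=\cdot$ for every $\lambda$ and $\phi\colon\Lambda\times A\to\Lambda$ arbitrary: both brace axioms hold, yet $\phi(\lambda,ab)=\phi(\phi(\lambda,a),b)$ can fail, while the constructed family (constantly $\set{(a,\id_A)\mid a\in A}$) satisfies all the coset equalities as sets---so the failure is isolated precisely in the transition clause you glossed over. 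Hence in the direction (\textit{i})$\to$(\textit{ii}) this compatibility cannot be derived; it must be assumed as a standing property of the fixed dynamical set $(A,\phi)$, or built into the definition of dynamical skew brace as in Matsumoto's original formulation---the definitional variance acknowledged in Remark \ref{Matsumoto_gave_a_different_definition}, and the same condition that makes the quiver composition of Theorem \ref{groupoid_structure_on_Q(A)} well defined. (Note also that the transition clause of Definition \ref{dynamical_subgroup} applied verbatim to all pairs of $\Hol(A)$ would force $\phi(\lambda,f(b))=\phi(\lambda,b)$ for every $f\in\Aut(A)$, contradicting the paper's own Example \ref{ex:Z3}; the workable reading restricts it along the fibres, where it becomes exactly the displayed compatibility.) With that hypothesis stated explicitly, your proof is complete.
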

    We shall equivalently say that $(A,\Lambda,\phi,\cdot,\set{\bullet_\lambda}_{\lambda\in\Lambda})$ is a dynamical skew brace, or that $(A,\cdot,\Ss)$ is a dynamical skew brace, according to which definition comes most in handy.

    For a dynamical skew brace $(A,\cdot, \Ss)$, we can construct a quiver $Q$ in the following way. The set of vertices of $Q$ is $\Lambda$. Notice that for each $\lambda\in \Lambda$ and $a\in A$ one has $(a,f^\lambda_a)^{-1}S_\lambda = S_{\phi(\lambda,a)}$, thus we associate with each $a\in A$ an arrow in $Q(\lambda,\phi(\lambda,a))$. Observe that $Q $ is exactly $ \Quiv(A,\phi)$. 
\begin{remark}[{cf.\@ \cite[Proposition 5.1]{matsumoto2011combinatorialaspects}}]\label{looped_vertex_equivalent_conditions}
    Let $Q =\Quiv(A,\phi)$. Notice that the request $\beta^{-1}\alpha \in S_{\phi_{\Hol(A)}(\lambda,\beta)}$ for all $\alpha,\beta\in S_\lambda$ implies $1_A\in S_{\phi_{\Hol(A)}(\lambda,\beta)}$. Therefore, $1_A\in S_\lambda$ for all $\lambda$ such that $Q(\Lambda,\lambda)\neq \emptyset$. The converse also holds: if $1_A\in S_\lambda$, then $1_A^{-1}S_\lambda = S_\lambda$, and this is an incoming arrow at $\lambda$ (it is, in fact, a loop on $\lambda$). Thus, for a dynamical subgroup $\Ss$ of $\Hol(A)$, and for a vertex $\lambda\in\Lambda$, the following are equivalent:
    \begin{enumerate}
    \item $1_A\in S_\lambda$;
        \item the vertex $\lambda$ has at least one incoming arrow;
        \item the vertex $\lambda$ has a loop.
    \end{enumerate}
    As an immediate consequence, the dynamical subgroup $\Ss$ is unital if and only if all vertices $\lambda\in\Lambda$ satisfy the above equivalent conditions.
\end{remark}
\begin{lemma}\label{lemma:inverses_with_f} Let $(A,\Lambda,\phi,\cdot,\set{\bullet_\lambda}_{\lambda\in\Lambda})$ be a dynamical skew brace, and $f^\lambda_a\colon b\mapsto a^{-1}(a\bullet_\lambda b)$ be defined as above. Then, one has \[f^{\phi(\lambda,a)}_{\left(\inv{f^\lambda_a}(a)\right)^{-1}} = \inv{f^\lambda_a}.\] 
\end{lemma}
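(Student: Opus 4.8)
The plan is to exploit the fact, recorded in Proposition \ref{dynamical_braces_and_dynamical_subgroups}, that the brace compatibility \eqref{eq:brace-compatibility} makes each $f^\lambda_a$ a group automorphism of $(A,\cdot)$, so that the defining relation $f^\lambda_a(b)=a^{-1}(a\bullet_\lambda b)$ can be rewritten as $a\bullet_\lambda b = a\cdot f^\lambda_a(b)$ and manipulated freely. From here two routes are available: a structural one, reading the claim inside $\Hol(A)$ as the assertion that the holomorph-inverse $\inv{(a,f^\lambda_a)}=\left(\left(\inv{f^\lambda_a}(a)\right)^{-1},\inv{f^\lambda_a}\right)$ of $(a,f^\lambda_a)\in S_\lambda$ lands in $S_{\phi(\lambda,a)}$, whence regularity identifies its automorphism component; and a direct computational one. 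I would carry out the computational route, as it is the most transparent, and keep the holomorph picture only as a sanity check.

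First I would extract a cocycle identity for the automorphisms $f^\lambda_a$ from the dynamical associativity \eqref{eq:dyn-associativity}. Expanding both sides of $a\bullet_\lambda(b\bullet_{\phi(\lambda,a)}c)=(a\bullet_\lambda b)\bullet_\lambda c$ through $x\bullet_\mu y = x\cdot f^\mu_x(y)$, and using that $f^\lambda_a$ is an automorphism, the left-hand side becomes $a\cdot f^\lambda_a(b)\cdot f^\lambda_a\!\left(f^{\phi(\lambda,a)}_b(c)\right)$ and the right-hand side $a\cdot f^\lambda_a(b)\cdot f^\lambda_{a\bullet_\lambda b}(c)$. Cancelling the common left factor $a\cdot f^\lambda_a(b)$ by group cancellation yields, for all $a,b\in A$ and all $\lambda\in\Lambda$, the identity
\begin{equation*} f^\lambda_a\circ f^{\phi(\lambda,a)}_b = f^\lambda_{a\bullet_\lambda b}. \end{equation*}

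The second step is the substitution that collapses the right-hand subscript to $1_A$. Taking $b:=\left(\inv{f^\lambda_a}(a)\right)^{-1}$ and using that $f^\lambda_a$ is an automorphism, one computes $f^\lambda_a(b)=a^{-1}$, hence $a\bullet_\lambda b = a\cdot f^\lambda_a(b)=1_A$. Feeding this into the cocycle identity gives
\begin{equation*} f^\lambda_a\circ f^{\phi(\lambda,a)}_{\left(\inv{f^\lambda_a}(a)\right)^{-1}} = f^\lambda_{1_A}, \end{equation*}
so that, after composing on the left with $\inv{f^\lambda_a}$, the claim is equivalent to the single normalisation $f^\lambda_{1_A}=\id$.

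I expect this normalisation to be the main obstacle, since it is precisely the statement that $1_A$ is a \emph{left} unit for $\bullet_\lambda$: the automorphism property delivers only the right unit $a\bullet_\lambda 1_A=a$ for free, via $f^\lambda_a(1_A)=1_A$. To produce the left unit I would again use \eqref{eq:dyn-associativity}, now with $b=1_A$: the right-hand side is $(a\bullet_\lambda 1_A)\bullet_\lambda c=a\bullet_\lambda c$, while the left-hand side is $a\bullet_\lambda\!\left(1_A\bullet_{\phi(\lambda,a)}c\right)$, and injectivity of $a\bullet_\lambda\blank$ forces $1_A\bullet_{\phi(\lambda,a)}c=c$, i.e.\@ $f^{\phi(\lambda,a)}_{1_A}=\id$. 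This argument yields the left-unit property at every vertex that is a \emph{target} (carries an incoming arrow) in the sense of Remark \ref{looped_vertex_equivalent_conditions}; the identity actually required to close the proof is $f^\lambda_{1_A}=\id$ at the base vertex $\lambda$, which is immediate when $\lambda$ is looped but in general is exactly the unital behaviour of the base vertex that must be assumed. Granting $f^\lambda_{1_A}=\id$, composing the displayed identity on the left with $\inv{f^\lambda_a}$ gives $f^{\phi(\lambda,a)}_{\left(\inv{f^\lambda_a}(a)\right)^{-1}}=\inv{f^\lambda_a}$, as desired.
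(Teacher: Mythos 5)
Your computational route is correct, and it is genuinely different from the paper's argument. The ``structural'' alternative you sketch in your opening paragraph and then set aside as a sanity check is in fact the paper's entire proof: the pair $\left(\left(\inv{f^\lambda_a}(a)\right)^{-1},\inv{f^\lambda_a}\right)$ is the inverse of $(a,f^\lambda_a)$ in $\Hol(A)$, the paper asserts that it lies in $S_{\phi(\lambda,a)}$, and regularity (Proposition \ref{dynamical_braces_and_dynamical_subgroups}) then identifies its automorphism component. Your route instead extracts the cocycle identity $f^\lambda_a\circ f^{\phi(\lambda,a)}_b=f^\lambda_{a\bullet_\lambda b}$ from the dynamical associativity \eqref{eq:dyn-associativity} and specialises $b=\left(\inv{f^\lambda_a}(a)\right)^{-1}$, which buys something the one-line holomorph argument conceals: the exact general formula $f^{\phi(\lambda,a)}_{\left(\inv{f^\lambda_a}(a)\right)^{-1}}=\inv{f^\lambda_a}\circ f^\lambda_{1_A}$, exhibiting precisely the normalisation on which the claim hinges.

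Your hedge at the end is not excessive caution but a correct diagnosis of a flaw in the statement itself. Since $S_{\phi(\lambda,a)}=(a,f^\lambda_a)^{-1}S_\lambda$, the membership the paper asserts holds if and only if $(1_A,\id_A)\in S_\lambda$, i.e.\@ $f^\lambda_{1_A}=\id_A$ --- exactly the condition your computation isolates, and one that fails at initial vertices of non-zero-symmetric dynamical skew braces. Concretely, in the maximal dynamical skew brace over $A=\Z/3\Z$ (Example \ref{ex:Z3_initials}, additive notation, so $1_A=0$), take $\lambda=R_0=\set{(0,-\id),(1,-\id),(2,-\id)}$ and $a=1$: then $f^{R_0}_1=-\id$, $\left(\inv{f^{R_0}_1}(1)\right)^{-1}=1$, and $\phi(R_0,1)=S_0$, so the left-hand side of the lemma is $f^{S_0}_1=\id$ while the right-hand side is $-\id$; your corrected formula instead gives $\inv{f^{R_0}_1}\circ f^{R_0}_0=(-\id)\circ(-\id)=\id$, as it should. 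You are also right that your target-vertex normalisation $f^{\phi(\lambda,a)}_{1_A}=\id_A$ (which is Remark \ref{looped_vertex_equivalent_conditions}) cannot close this gap, since what is needed is unitality at the \emph{source} vertex $\lambda$; so the hypothesis $(1_A,\id_A)\in S_\lambda$ (equivalently, that $\lambda$ carries a loop) must indeed be added to the statement, and cannot be derived. The paper's own proof silently assumes it at the membership step. No damage propagates: the lemma is invoked only in Theorem \ref{groupoid_structure_on_Q(A)}, inside the paragraph that explicitly supposes $(1_A,\id_A)\in S_\lambda$, where your argument closes completely.
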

\begin{proof}We use the description given in Proposition \ref{dynamical_braces_and_dynamical_subgroups}. Since the pair \[\left(\left(\inv{f^\lambda_a}(a)\right)^{-1}\!\!,\; \inv{f^\lambda_a}\right)\] lies in the subset $S_{\phi(\lambda,a)}$, by definition of $f^{\phi(\lambda,a)}$ one has $f^{\phi(\lambda,a)}_{\left(\inv{f^\lambda_a}(a)\right)^{-1}} = \inv{f^\lambda_a}$. \end{proof}
\begin{remark}\label{quiver_of_groupoid_is_homogeneous}Let $(\Gg,\bullet)$ be a groupoid over $\Lambda$. It is well known that, as a quiver, $\Gg$ is a disjoint union of  complete quivers $\Kk_i$ of some degrees $d_i$, possibly distinct. Indeed, let $\lambda,\mu,\nu\in \Lambda$ be such that there exists an arrow $a\colon \mu \to \nu$: then, $\blank\bullet a$ is a bijection $\Gg(\lambda,\mu)\to \Gg(\lambda,\nu)$ with inverse $\blank\bullet a^{-\bullet}$, where $a^{-\bullet}$ denotes the inverse of $a$ with respect to $\bullet$. The dual argument yields $\Gg(\mu,\lambda)\cong \Gg(\nu,\lambda)$.
\end{remark}
\begin{theorem}\label{groupoid_structure_on_Q(A)}
    Let $(A,\cdot,\Ss)$ be a dynamical skew brace. The following is a left unital associative semiloopoid structure on $Q$:
    \begin{enumerate}
        \item the composition $\bullet$ of the arrow $[\lambda\Vert a]$ and the arrow $[\phi(\lambda ,a)\Vert b]$ is the arrow $[\lambda \Vert af^\lambda_a(b)] =[\lambda\Vert a\bullet_\lambda b]$;
        \item the set $\Ss$ is decomposed into $\Ss_\mathrm{in}\sqcup\Ss_{\mathrm{un}}$, where by definition $S_\lambda\in \Ss_{\mathrm{un}}$ if and only if $(1_A, \id_A)\in S_\lambda$, and the decomposition $\Lambda = \Lambda_{\mathrm{in}}\sqcup \Lambda_{\mathrm{un}}$ is defined accordingly; \item the right unit $\mathbf{1}_\lambda$ on the vertex $\lambda\in \Lambda_{\mathrm{un}}$ is $[\lambda\Vert 1_A]$, which is a loop.
        \end{enumerate}
        The full subquiver with vertices $\Lambda_{\mathrm{un}}$ is a groupoid, it is clearly the maximal left sub-semiloopoid of $Q$ which is a groupoid. Moreover, $Q$ itself is a groupoid if and only if $\Ss$ is unital (i.e., if and only if $A$ is zero-symmetric). For a unital vertex $\lambda\in\Lambda_{\mathrm{un}}$, one has:
        \begin{enumerate}\setcounter{enumi}{3}
        \item the inverse of $[\lambda\Vert a]$, denoted by $[\lambda\Vert a]^{-\bullet}$, is \[ [\lambda\Vert a]^{-\bullet} = \Big[\phi(\lambda,a)\Big\Vert \left(a\backslash_\lambda (a\cdot a)\right)^{-1} \Big] = \bigg[ \phi(\lambda,a)\bigg\Vert \left(\inv{f^\lambda_a}(a)\right)^{-1} \bigg],\] where $\inv{f^\lambda_a}$ denotes the inverse of the map $f^\lambda_a$, and $\left(\inv{f^\lambda_a}(a)\right)^{-1}$ denotes the inverse in $A$ of the element $\inv{f^\lambda_a}(a)$.
    \end{enumerate}
\end{theorem}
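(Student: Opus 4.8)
The plan is to verify each semiloopoid axiom by systematically translating the operation $\bullet$ into the group $\Hol(A)$ through the regular dynamical subgroup $\Ss$, exploiting the correspondence of Proposition~\ref{dynamical_braces_and_dynamical_subgroups}. The key computational identity is that, in $\Hol(A)$, one has $(a,f^\lambda_a)(b,f^{\phi(\lambda,a)}_b) = (a\bullet_\lambda b,\, f^\lambda_{a\bullet_\lambda b})$: indeed the product lies in $S_\lambda$ because $(b,f^{\phi(\lambda,a)}_b)\in S_{\phi(\lambda,a)} = (a,f^\lambda_a)^{-1}S_\lambda$, it has first component $a\bullet_\lambda b = af^\lambda_a(b)$, and regularity of $S_\lambda$ identifies it uniquely. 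From this and the transition rule $\phi_{\Hol(A)}(\lambda,\beta\gamma) = \phi_{\Hol(A)}(\phi_{\Hol(A)}(\lambda,\beta),\gamma)$ of a dynamical subgroup, I would deduce $\phi(\lambda, a\bullet_\lambda b) = \phi(\phi(\lambda,a),b)$, which is exactly the statement that $m_\bullet$ respects source and target, i.e. that $\bullet$ is a morphism in $\mathsf{Quiv}_\Lambda$.

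The associativity of $\bullet$ then unwinds, using this same source–target identity, to the dynamical associativity \eqref{eq:dyn-associativity}; and the left semiloopoid property — that $[\lambda\Vert a]\bullet\blank$ is a bijection $Q(\phi(\lambda,a),\Lambda)\to Q(\lambda,\Lambda)$ — is immediate, since it is the map $[\phi(\lambda,a)\Vert b]\mapsto[\lambda\Vert a\bullet_\lambda b]$ and $b\mapsto a\bullet_\lambda b$ is bijective because $(A,\bullet_\lambda)$ is a left quasigroup. For unitality I would first use Remark~\ref{looped_vertex_equivalent_conditions} to identify $\Lambda_{\mathrm{un}}=\im(\target)$ with the set of $\lambda$ such that $(1_A,\id_A)\in S_\lambda$. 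Since $f^\lambda_a$ is a group automorphism one has $a\bullet_\lambda 1_A = af^\lambda_a(1_A) = a$, so $[\lambda\Vert 1_A]$ — a loop, because $\phi(\lambda,1_A)=\lambda$ — is a right unit at every vertex; whereas $1_A\bullet_\lambda b = f^\lambda_{1_A}(b)$ equals $b$ for all $b$ exactly when $f^\lambda_{1_A}=\id_A$, i.e. exactly on the unital vertices. This pins down $\mathbf{1}_\lambda=[\lambda\Vert 1_A]$ as a two-sided unit precisely on $\Lambda_{\mathrm{un}}$, yielding the left unital associative semiloopoid structure and parts (1)--(3).

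For the groupoid claims I would note that any arrow out of a unital vertex has a target carrying an incoming arrow, hence a target in $\Lambda_{\mathrm{un}}$ by Remark~\ref{looped_vertex_equivalent_conditions}; so the full subquiver on $\Lambda_{\mathrm{un}}$ is closed under $\bullet$ and already has its units. Existence of inverses — and the explicit formula (4) — is where the real work lies. The inverse of $[\lambda\Vert a]$ must be the arrow $[\phi(\lambda,a)\Vert c]$ with $a\bullet_\lambda c = 1_A$; solving $af^\lambda_a(c)=1_A$ gives $c = \inv{f^\lambda_a}(a^{-1}) = (\inv{f^\lambda_a}(a))^{-1} = a\backslash_\lambda 1_A = (a\backslash_\lambda(a\cdot a))^{-1}$, which reconciles the two displayed expressions. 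The subtle point is to check that this is a \emph{two-sided} inverse: here I would invoke Lemma~\ref{lemma:inverses_with_f} to recognise that $(c, f^{\phi(\lambda,a)}_c) = (a,f^\lambda_a)^{-1}$ in $\Hol(A)$, whence the transition rule gives $\phi(\phi(\lambda,a),c) = \phi_{\Hol(A)}(\lambda, (a,f^\lambda_a)(a,f^\lambda_a)^{-1}) = \phi(\lambda,1_A)=\lambda$ (so that $[\phi(\lambda,a)\Vert c]\bullet[\lambda\Vert a]$ is well formed), while $c\bullet_{\phi(\lambda,a)} a = c\,\inv{f^\lambda_a}(a) = 1_A$ by the choice of $c$. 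Thus $[\phi(\lambda,a)\Vert c]$ is a genuine two-sided inverse, the full subquiver on $\Lambda_{\mathrm{un}}$ is a groupoid, and since any sub-semiloopoid that is a groupoid must carry a loop at each of its vertices (hence live over $\Lambda_{\mathrm{un}}$) it is the maximal such. Finally $Q$ is a groupoid iff every vertex is unital iff $\Ss$ is unital, by the same remark. The main obstacle throughout is the careful $\Hol(A)$-bookkeeping required for the inverse, which the identity $(c,f^{\phi(\lambda,a)}_c)=(a,f^\lambda_a)^{-1}$ coming from Lemma~\ref{lemma:inverses_with_f} renders tractable.
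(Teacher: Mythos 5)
Your proof is correct in substance and follows essentially the same route as the paper's: the same $\Hol(A)$ bookkeeping for well-definedness of $\bullet$ (your key identity $(a,f^\lambda_a)(b,f^{\phi(\lambda,a)}_b) = (a\bullet_\lambda b,\, f^\lambda_{a\bullet_\lambda b})$ is precisely the paper's element $(x, f^\lambda_x)$ with $x = af^\lambda_a(b)$; the paper reads off the target from the coset identity $S_{\phi(\phi(\lambda,a),b)} = (x,f^\lambda_x)^{-1}S_\lambda$ where you instead use the transition rule, an equivalent step), the same reduction of associativity to \eqref{eq:dyn-associativity}, the same left-quasigroup argument for the bijectivity of $[\lambda\Vert a]\bullet\blank$, and the same use of Lemma \ref{lemma:inverses_with_f} to make the inverse two-sided. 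Your explicit verifications that the full subquiver over $\Lambda_{\mathrm{un}}$ is closed under $\bullet$ and that it is the \emph{maximal} sub-semiloopoid which is a groupoid are welcome: the paper dispatches these with ``clearly''.

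One intermediate claim is false, though harmlessly so: $[\lambda\Vert 1_A]$ is \emph{not} a loop at every vertex, because $\phi(\lambda, 1_A) = \lambda$ fails at initial vertices. (You were likely misled by the parenthetical assertion in Definition \ref{dynamical_subgroup}, which is only valid at vertices admitting an incoming arrow.) Concretely, in Example \ref{ex:Z3_initials} the initial vertex $R_0 = \set{(0,-\id), (1,-\id), (2,-\id)}$ satisfies $\phi(R_0, 0) = (0,-\id)^{-1}R_0 = S_0 \neq R_0$, so its $1_A$-labelled arrow is not a loop. Indeed, your claim would contradict the very theorem you are proving: if every vertex carried a loop, then by Remark \ref{looped_vertex_equivalent_conditions} every vertex would be unital, $\Ss$ would always be unital, and the stated equivalence ``$Q$ is a groupoid iff $\Ss$ is unital'' would be empty. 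The slip costs nothing, however: the semiloopoid axioms demand a unit only at $\lambda\in\im(\target) = \Lambda_{\mathrm{un}}$ (at an initial vertex the right-unit condition is vacuous, there being no incoming arrows), and at a unital vertex the loop property does hold, via $S_{\phi(\lambda,1_A)} = (1_A,\id_A)^{-1}S_\lambda = S_\lambda$ — which is exactly what Remark \ref{looped_vertex_equivalent_conditions}, which you already invoke, supplies. With the quantifier ``at every vertex'' corrected to ``at every unital vertex'', your argument coincides with the paper's proof.
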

Observe, in  particular, that the multiplication $\bullet$ in the left unital associative semiloopoid $Q$ is uniquely described by the family $\set{\bullet_\lambda}_{\lambda\in\Lambda}$.
\begin{proof}
In this proof, $(\blank)^{-1}$ denotes either the inverse in $(A,\cdot)$, or the inverse in $\Hol(A)$, or the inverse function of an invertible map $f\colon A\to A$, according to the situation. There will always be a unique sensible interpretation for it, but the reader is invited to use some care.

    Let $[\lambda\Vert a]\bullet [\phi(\lambda,a)\Vert  b] := [\lambda\Vert  a f^\lambda_a(b)]$. In order to prove that the left semiloopoid structure $\bullet$ is well defined, we need to check that $a f^\lambda_a(b)$ labels indeed an edge $\lambda\to \phi(\phi(\lambda,a),b)$. Notice that the statement: «$(b,f^{\phi(\lambda,a)}_b)\in S_{\phi(\lambda,a)}$» is equivalent to: «There exists $(x, f^\lambda_x)\in S_\lambda$ such that $(b,f^{\phi(\lambda,a)}_b) = (a,f^\lambda_a)^{-1}(x, f^\lambda_x)$». Thus $ S_{\phi(\phi(\lambda,a),b)} = (x, f^\lambda_x)^{-1}(a,f^\lambda_a)(a, f^\lambda_a)^{-1}S_\lambda = (x, f^\lambda_x)^{-1}S_\lambda$, and now it suffices to observe that $x = af^\lambda_a(b)$ does the job.
   
As for the associativity of $\bullet$, notice that
    $$\Big([\lambda\Vert a]\bullet [\phi(\lambda,a)\Vert b]\Big)\bullet [\phi(\phi(\lambda,a),b)\Vert c]=[\lambda\Vert  (a\bullet_\lambda b)\bullet_\lambda c],$$
    and 
    $$[\lambda\Vert a]\bullet\Big([\phi(\lambda,a)\Vert b]\bullet [\phi(\phi(\lambda,a),b)\Vert c] \Big) = [\lambda\Vert  a\bullet_{\lambda}(b\bullet_{\phi(\lambda,a)} c)],$$
    thus the associativity of $\bullet$ follows from the dynamical associativity of $\set{\bullet_\lambda}_{\lambda\in\Lambda}$.

One has $a\bullet_\lambda 1_A = af^\lambda_a(1_A)=a$. If moreover $\lambda\in\Lambda_{\mathrm{un}}$, i.e.\@ if $(1_A,\id_A)\in S_\lambda$, one also has $f^\lambda_{1_A} = \id_A$, whence $1_A\bullet_\lambda a = 1_A f^\lambda_{1_A}(a)= a$. Observe that $[\lambda\Vert 1_A] $ is a loop on $\lambda$ if $\lambda\in \Lambda_{\mathrm{un}}$, because $\Ss_{\phi(\lambda,1_A)} = (1_A,\id_A)^{-1}\Ss_\lambda = \Ss_\lambda$. This proves that $[\lambda\Vert 1_A]$ is a loop and a bilateral unit if $\lambda\in\Lambda_{\mathrm{un}}$. 

Suppose now $(1_A,\id_A)\in S_\lambda$. If $(a,f^\lambda_a)$ is an element of $S_\lambda$ which produces an arrow $[\lambda\Vert  a]\colon \lambda\to \phi(\lambda,a)$, then $(a,f^\lambda_a)^{-1}(1_A,\id_A) = (a, f^\lambda_a)^{-1}$ is an element of $S_{\phi(\lambda,a)}$, which corresponds to an arrow \[\bigg[\phi(\lambda,a)\bigg\Vert  \left(\inv{f^\lambda_a}(a)\right)^{-1}\bigg]\!\colon \phi(\lambda,a)\to \lambda.\] One has
$a\bullet_\lambda \left(\left(\inv{f^\lambda_a}(a)\right)^{-1}\right)= a\left(f^\lambda_a\inv{f^\lambda_a}(a)\right)^{-1}=1_A$, thus \[  [\lambda\Vert a]\bullet \Big[\phi(\lambda,a)\Big\Vert  \inv{\left(\inv{f^\lambda_a}(a)\right)}\Big]=[\lambda\Vert 1_A].\] By Lemma \ref{lemma:inverses_with_f}, one has $f^{\phi(\lambda,a)}_{\left(\inv{f^\lambda_a}(a)\right)^{-1}} = \inv{f^\lambda_a}$, whence
\begin{align*}
\left(\left(\inv{f^\lambda_a}(a)\right)^{-1} \right)\bullet_{\phi(\lambda,a)}a&= \left(\inv{f^\lambda_a}(a)\right)^{-1}f^{\phi(\lambda,a)}_{\left(\inv{f^\lambda_a}(a)\right)^{-1}}(a)\\
&=\left(\inv{f^\lambda_a}(a)\right)^{-1}\inv{f^\lambda_a}(a)\\&=1_A,
\end{align*}
thus \[ \Big[\phi(\lambda,a)\Big\Vert \left(\inv{f^\lambda_a}(a)\right)^{-1}\Big]\bullet [\lambda\Vert a]=[\phi(\lambda,a)\Vert 1_A].\] This proves that $\Big[\phi(\lambda,a)\Big\Vert \left(\inv{f^\lambda_a}(a) \right)^{-1}\Big]$ is the bilateral inverse of $[\lambda\Vert a]$, and hence the full subquiver with vertices $\Lambda_{\mathrm{un}}$ is a groupoid. In particular, $Q$ is a groupoid if and only if $\Lambda = \Lambda_{\mathrm{un}}$. Furthermore, notice that $f^{\lambda}_a(c)= a^{-1}(a\bullet_\lambda c)= b$ implies $c = \inv{f^\lambda_a}(b) = a\backslash_\lambda (a\cdot b)$, whence also \[ [\lambda\Vert a]^{-\bullet} = \bigg[\phi(\lambda,a)\bigg\Vert \left(\inv{f^\lambda_a}(a) \right)^{-1}\bigg] = \Big[\phi(\lambda,a)\Big\Vert \left(a\backslash_\lambda (a\cdot a)\right)^{-1} \Big].\]
Finally, let $\lambda$ be any vertex: since $[\lambda\Vert a]\bullet [\phi(\lambda,a)\Vert b] = [\lambda\Vert a\bullet_\lambda b]$ holds, one deduces that $[\lambda\Vert a]\bullet \blank$ is a bijection $Q(\phi(\lambda,a),\Lambda)\to Q(\lambda,\Lambda)$ because $a \bullet_\lambda\blank$ is a bijection $A\to A$.
\end{proof}
\begin{corollary}\label{cor:Q(A)_homogeneous}
    Let $(A,\cdot,\Ss)$ be a dynamical skew brace. Then $\Ss$ is unital if and only if $Q=\Quiv(A,\phi)$ is a homogeneous quiver of weight $|A|$.
\end{corollary}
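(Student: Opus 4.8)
The plan is to combine three earlier results: the groupoid structure of Theorem~\ref{groupoid_structure_on_Q(A)}, the decomposition of a groupoid into complete quivers from Remark~\ref{quiver_of_groupoid_is_homogeneous}, and the loop criterion of Remark~\ref{looped_vertex_equivalent_conditions}. The single numerical input that drives everything is the observation that, in $Q=\Quiv(A,\phi)$, the arrows with source $\lambda$ are exactly the $[\lambda\Vert a]$ for $a\in A$, so that every vertex has out-degree exactly $|A|$, independently of $\lambda$.

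For the forward implication I would argue as follows. If $\Ss$ is unital, then Theorem~\ref{groupoid_structure_on_Q(A)} shows that $Q$ is a groupoid, and Remark~\ref{quiver_of_groupoid_is_homogeneous} then expresses $Q$ as a disjoint union of complete quivers $\Kk_i$ of degrees $d_i$. In a complete quiver of degree $d_i$ on $s_i=|\Obj(\Kk_i)|$ vertices, a fixed vertex emits $d_i$ arrows to each of the $s_i$ vertices of its component (loops included), hence has out-degree $d_i s_i$. Matching this against the out-degree $|A|$ computed above gives $d_i\,|\Obj(\Kk_i)|=|A|$ for every component, which is exactly the condition (Definition~\ref{homogeneous_quiver_of_weight_n}) for $Q$ to be homogeneous of weight $|A|$.

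For the converse, suppose $Q$ is homogeneous of weight $|A|$. Then each component is complete of some degree $d_i$ with $d_i\,|\Obj(\Kk_i)|=|A|$; since $|A|\ge 1$ and $|\Obj(\Kk_i)|\ge 1$, we get $d_i\ge 1$, so every vertex carries at least one loop. By the equivalence in Remark~\ref{looped_vertex_equivalent_conditions}, the presence of a loop at $\lambda$ is the same as $1_A\in S_\lambda$; as this holds for all $\lambda$, the dynamical subgroup $\Ss$ is unital.

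I do not expect a genuine obstacle here: the argument is essentially bookkeeping the out-degree inside a complete connected component and feeding it into the cited structural results. The only points that require a moment's care are confirming that the out-degree in $\Quiv(A,\phi)$ is $|A|$ at every vertex, and translating the word ``complete'' into the arithmetic relation $d_i\,|\Obj(\Kk_i)|=|A|$; both are immediate from the definitions.
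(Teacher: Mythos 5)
Your proof is correct and takes essentially the same route as the paper's: both directions rest on the same three ingredients (Theorem \ref{groupoid_structure_on_Q(A)}, Remark \ref{quiver_of_groupoid_is_homogeneous}, and the loop criterion of Remark \ref{looped_vertex_equivalent_conditions}), combined with the out-degree count $|Q(\lambda,\Lambda)|=|A|$ that yields $d_i\,|\Obj(\Kk_i)|=|A|$. Your explicit observation that $d_i\ge 1$, so every vertex carries a loop, merely spells out a step the paper leaves implicit.
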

\begin{proof}
    If $Q$ is a union of complete quivers, then in particular every vertex has a loop, hence $\Ss$ is unital by Remark \ref{looped_vertex_equivalent_conditions}. Conversely, suppose that $\Ss$ is unital. Then $Q$ is the underlying quiver of a groupoid, thus by Remark \ref{quiver_of_groupoid_is_homogeneous} $Q$ is a union of complete quivers of some degrees $d_i$. Let $\Kk_i$ be a connected component of $Q$, complete of degree $d_i$: if $\lambda$ is a vertex in $\Kk_i$ then, for all vertices $\mu$ in $\Kk_i$, there exist exactly $d_i$ arrows from $\lambda$ to $\mu$. Moreover, the number of arrows with source $\lambda$ is $|Q(\lambda,\Lambda)| =|A|$. Thus $|A| = d_i |\Obj(\Kk_i)|$ holds.
\end{proof}
\begin{remark}
Following \cite[Definition 3.4]{matsu}, a multiplication $\bullet_\lambda$ is \emph{zero-symmetric} if $1_A\bullet_\lambda a= a$ holds for all $a$; and a dynamical brace is \emph{zero-symmetric} if all the $\bullet_\lambda$'s are zero-symmetric. In Theorem \ref{groupoid_structure_on_Q(A)}, we have proven \textit{en passant} that $A$ is zero-symmetric if and only if $\Ss$ is unital, if and only if $\Quiv(A,\phi)$ has a groupoid structure; cf.\@ \cite[Proposition 5.1]{matsumoto2011combinatorialaspects}. We denote by $\mathsf{DSB}^0_\Lambda\subset \mathsf{DSB}_\Lambda$ the subcategory of zero-symmetric dynamical skew braces.
\end{remark}
\begin{remark}
Skew braces are always zero-symmetric.
\end{remark}
\begin{remark}\label{zero_symmetric_string_of_integers}If a dynamical skew brace is zero-symmetric, then $\Quiv(A,\phi)$ is homogeneous of weight $|A|$ by Corollary \ref{cor:Q(A)_homogeneous}, and hence its shape is uniquely determined by a string of integers $\set{N_s}_s$, where $N_s$ is the number of connected components with $s$ vertices, and $s$ runs along the divisors of $|A|$.\end{remark}
\subsection{Dynamical skew braces and braidings} Dynamical skew braces have been purposely introduced to produce solutions to the \textsc{ybe}. Thus, the following proposition is central in our work.
\begin{proposition}[Anai \cite{anai2021thesis}, generalising Matsumoto \cite{matsu}]\label{braiding-on-dbraces}For every dynamical skew brace $(A,\Lambda, \phi, \cdot, \set{\bullet_\lambda}_{\lambda\in\Lambda})$, the following is a Yang--Baxter map on the left semiloopoid $\Quiv(A,\phi)$:
    \begin{equation}\label{eq:dbrace-braiding}\begin{split}&\sigma\Big([\lambda\Vert a]\ot  [\phi(\lambda,a)\Vert b]\Big)\\ &\hspace{2em}= \big[\lambda\big\Vert  a^{-1}(a\bullet_\lambda b)\big]\ot  \big[ \phi(\lambda, a^{-1}(a\bullet_\lambda b))\big\Vert  (a^{-1}(a\bullet_\lambda b))\backslash_{\lambda} (a\bullet_\lambda b) \big] \\
    &\hspace{2em}= \big[\lambda\big\Vert  f^\lambda_a(b)\big]\ot  \big[ \phi(\lambda, f^\lambda_a(b))\big\Vert f^\lambda_a(b)\backslash_{\lambda} \big( af^\lambda_a(b)\big) \big] .\end{split}\end{equation}
The braiding is involutive if and only if $A$ is a dynamical brace.
\end{proposition}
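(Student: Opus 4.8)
The plan is to observe that a dynamical skew brace equips the quiver $\Quiv(A,\phi)$ with the structure of a \emph{quiver-theoretic skew brace}, after which the statement becomes a direct application of Theorem \ref{thm:qtsb_yields_braiding}. First I would transport the group structure: for each $\lambda\in\Lambda$ the outgoing arrows $Q(\lambda,\Lambda)=\set{[\lambda\Vert a]\mid a\in A}$ are in bijection with $A$, so the single group operation $\cdot$ on $A$ induces a group operation $\cdot_\lambda$ on $Q(\lambda,\Lambda)$ by $[\lambda\Vert a]\cdot_\lambda [\lambda\Vert b]:=[\lambda\Vert a\cdot b]$, with unit $[\lambda\Vert 1_A]$ (which coincides with $\mathbf{1}_\lambda$ whenever $\lambda\in\Lambda_{\mathrm{un}}$). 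By Theorem \ref{groupoid_structure_on_Q(A)}, $(Q,\bullet)$ is already a left unital associative semiloopoid with $[\lambda\Vert a]\bullet [\phi(\lambda,a)\Vert b]=[\lambda\Vert a\bullet_\lambda b]$.

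The next step is to check the bracoid compatibility \eqref{qtbc}. Taking $x=[\lambda\Vert a]$ and $y=[\phi(\lambda,a)\Vert b]$, $z=[\phi(\lambda,a)\Vert c]$ (so that $x\ot y$ and $x\ot z$ are composable), the left-hand side of \eqref{qtbc} becomes $[\lambda\Vert a\bullet_\lambda(b\cdot c)]$ while the right-hand side becomes $[\lambda\Vert (a\bullet_\lambda b)\cdot a^{-1}\cdot (a\bullet_\lambda c)]$; thus \eqref{qtbc} is exactly the brace compatibility \eqref{eq:brace-compatibility}, which holds by hypothesis. Hence $(Q,\set{\cdot_\lambda},\bullet,\set{\mathbf{1}_\lambda})$ is a quiver-theoretic skew brace.

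At this point Theorem \ref{thm:qtsb_yields_braiding} produces a (left non-degenerate) braiding $\sigma(x\ot y)=(x\rightharpoonup y)\ot (x\leftharpoonup y)$ with $x\rightharpoonup y=x^{-1}(x\bullet y)$ and $x\leftharpoonup y=(x\rightharpoonup y)\backslash (x\bullet y)$, and since braidings on left unital associative semiloopoids solve the \textsc{ybe}, $\sigma$ is a Yang--Baxter map. The only remaining task is to unwind these two expressions in the present labelling and match them with \eqref{eq:dbrace-braiding}. For the first component, $x^{-1}=[\lambda\Vert a^{-1}]$ and $x\bullet y=[\lambda\Vert af^\lambda_a(b)]$, so $x\rightharpoonup y=[\lambda\Vert a^{-1}(af^\lambda_a(b))]=[\lambda\Vert f^\lambda_a(b)]$. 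For the second component, the semiloopoid left division $[\lambda\Vert c]\backslash [\lambda\Vert d]$ is by construction the unique arrow $[\phi(\lambda,c)\Vert c\backslash_\lambda d]$, so with $c=f^\lambda_a(b)$ and $d=af^\lambda_a(b)$ one obtains $x\leftharpoonup y=[\phi(\lambda,f^\lambda_a(b))\Vert f^\lambda_a(b)\backslash_\lambda(af^\lambda_a(b))]$, reproducing \eqref{eq:dbrace-braiding}.

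Finally, the involutivity clause is the second assertion of Theorem \ref{thm:qtsb_yields_braiding}: $\sigma$ is involutive if and only if $Q$ is a quiver-theoretic brace, i.e.\@ every $\cdot_\lambda$ is commutative; but each $\cdot_\lambda$ is merely a copy of $(A,\cdot)$, so this holds exactly when $(A,\cdot)$ is abelian, that is, when $A$ is a dynamical brace. I expect the only delicate point to be the bookkeeping in the third paragraph---matching the abstract operations $\rightharpoonup,\leftharpoonup$ with the explicit labels, and in particular identifying the semiloopoid division $\backslash$ with the quasigroup division $\backslash_\lambda$---but this is routine once the identification $Q(\lambda,\Lambda)\cong A$ has been fixed.
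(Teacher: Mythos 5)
Your proof is correct and follows essentially the same route as the paper: Proposition \ref{braiding-on-dbraces} itself is cited to Anai and Matsumoto, but the paper's proof of Addendum \ref{zusatz} does exactly what you do---transport $\cdot$ to the vertexwise operations $[\lambda\Vert a]\cdot_\lambda[\lambda\Vert b]:=[\lambda\Vert a\cdot b]$, observe that \eqref{qtbc} reduces to \eqref{eq:brace-compatibility}, and identify $\sigma$ with the braiding produced by Theorem \ref{thm:qtsb_yields_braiding}. Your explicit unwinding of $\rightharpoonup$, $\leftharpoonup$ and of the semiloopoid division $\backslash$ into the labels of \eqref{eq:dbrace-braiding}, as well as the involutivity argument (each $\cdot_\lambda$ is a copy of $(A,\cdot)$, so commutativity of all $\cdot_\lambda$ amounts to $A$ being a dynamical brace), is accurate.
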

\begin{zusatz}[to Proposition \ref{braiding-on-dbraces}] \label{zusatz}
    Let $(A,\Lambda, \phi, \cdot, \set{\bullet_\lambda}_{\lambda\in\Lambda})$ be a dynamical skew brace. Then the left unital associative semiloopoid $Q=\Quiv(A,\phi)$ has the structure of a quiver-theoretic skew brace, and the braiding $\sigma$ on $Q$ is exactly the map \eqref{eq:dbrace-braiding}. If moreover $\Ss$ is unital (i.e., $A$ is zero-symmetric), then $Q$ is a skew bracoid, and $\sigma$ is the corresponding non-degenerate braiding on the groupoid $Q$.
\end{zusatz}
\begin{proof}[Proof of the Addendum]
    We denote the two components of $\sigma$ by $\sigma(a\ot b) :=(a\rightharpoonup b)\ot( a\leftharpoonup b) $, as usual. Let $Q =\Quiv(A,\phi)$: we denote by $m\colon Q\otimes Q\to Q$, $m(x\ot y)=x\bullet y$, its groupoid multiplication. 
    
    The definition of $\sigma$ immediately implies $a\bullet_{\lambda} b = (a\rightharpoonup b)\bullet_\lambda (a\leftharpoonup b)$ for all $a,b\in A$, $\lambda\in \Lambda$. This proves $m\sigma= m$, which is \eqref{braided-commutative}.

We now define the quiver-theoretic skew brace structure on $Q$. The group operations $\cdot_\lambda$ on each vertex are simply given by $[\lambda\Vert a]\cdot_\lambda [\lambda\Vert b]:= [\lambda\Vert a\cdot b]$, and they satisfy $x\rightharpoonup y = x^{-1}\cdot (x\bullet y)$. The compatibility \eqref{qtbc} follows immediately from \eqref{eq:brace-compatibility}. This is moreover a skew bracoid when $Q$ is a groupoid, i.e, when $\Lambda = \Lambda_{\mathrm{un}}$, i.e., when $A$ is zero-symmetric. Observe that $\sigma$ is exactly the braiding defined in Theorem \ref{thm:qtsb_yields_braiding}.
\end{proof}
We have recalled the definition of a dynamical skew \emph{left} brace. One may define a notion of dynamical skew \emph{right} brace, by replacing left quasigroups with right quasigroups, and by assuming a right brace compatibility. A right version of the previous results, then, compiles itself: dynamical skew \emph{right} braces produce braided \emph{right} unital associative semiloopoids, and hence Yang--Baxter maps; and they also produce braided groupoids if they are zero-symmetric.

\subsection{The maximal (zero-symmetric) dynamical skew brace of a group} In this section, we shall present a class of examples of (zero-symmetric) dynamical skew braces that contains all other possible (zero-symmetric) examples. In particular, this will provide concrete examples of skew bracoids (see Examples \ref{ex:Z3} and \ref{ex:Z4}), and of quiver-theoretic skew braces that are not skew bracoids (see Example \ref{ex:Z3_initials}).

 Let $(A,\cdot)$ be any group, and let $\Ss_A$ be the set of all regular subsets of $\Hol(A)$. This is obviously a regular dynamical subgroup, it is the maximal regular dynamical subgroup of $\Hol(A)$, and yields a maximal dynamical skew brace over $(A,\cdot)$. Every other dynamical skew brace over $(A,\cdot)$ is contained in the maximal one.

Similarly, there is a maximal \emph{unital} regular dynamical subgroup of $\Hol(A)$, namely the set $\Ss^0_A = \set{S\subset \Hol(A) \text{ regular }, (1_A, \id_A)\in S}$. Indeed, the set $S_0 := \set{(a, \id_A)\mid a\in A}$ is in $\Ss^0_A$; and, if $S$ lies in $ \Ss^0_A$ and $(a , f)\in S$, then $(a, f)^{-1}S$ contains $(a,f)^{-1}(a,f) = (1_A, \id_A)$, and hence $(a,f)^{-1}S$ lies in $ \Ss^0_A$. It is clear by construction that $\Ss^0_A$ is the maximal \emph{unital} regular dynamical subgroup of $\Hol(A)$. The corresponding dynamical skew brace contains all the other \emph{zero-symmetric} dynamical skew braces over $(A,\cdot)$.
\begin{remark}
    It is easy to get convinced that $|\Ss^0_A| = |\Aut(A)|^{|A|-1}$: indeed, for all $a\in A$ we need to choose a map $f^\lambda_a$ among the automorphisms of $A$, but for $a=0$ the choice $f^\lambda_0 = \id_A$ is forced. Similarly, one can see that $|\Ss_A| = |\Aut(A)|^{|A|}$.
\end{remark}
\begin{example}\label{ex:Z3}
     Let $A=\Z/3\Z$, thus $\Aut(A)= \set{\pm \id}$. The maximal zero-symmetric dynamical brace on $A$ is $\Ss^0_A = \set{S_0, S_1, S_2, S_3}$, where
    	\begin{align*}
		&S_0 := \set{(0,\id), (1,\id), (2,\id)}, & & S_1 := \set{(0,\id), (1,-\id), (2,\id)},\\
		&S_2 := \set{(0,\id), (1,\id), (2,-\id)}, && S_3 := \set{(0,\id), (1,-\id), (2,-\id)},
	\end{align*}
 and the resulting quiver is as follows:
 $$
\begin{tikzcd}
& S_2\ar[ld,bend left=15, "1"description]\ar[rd,bend left=15, "2"description] \ar[loop above, "0"]& &\\
S_1\ar[ru,bend left=15, "2"description]\ar[rr,bend left=15, "1"description]\ar[loop left, "0"]& &S_3\ar[ll,bend left=15,"1"description]\ar[lu, bend left=15, "2"description]\ar[loop right, "0"]& S_0\ar[loop, out=60, in=120, looseness=4, "0"description]\ar[loop, out=50, in=130, looseness=7, "1"description]\ar[loop, out=40, in=140, looseness=10, "2"description]
\end{tikzcd}$$
We call $\Kk_1$ the connected component with three vertices, and $\Kk_0$ the connected component with one vertex.

We now describe the associated Yang--Baxter map. Notice that every path of the form $[S_i\Vert a|0]$ is sent into $[S_i\Vert 0|a]$, and vice versa. One may verify by hand that all the other paths of length $2$ in $\Kk_1$ are fixed by the solution. As for the component $\Kk_0$, it is easy to see that $[S_0\Vert  2|1]$ maps to $[S_0\Vert 1|2]$ and vice versa, $[S_0\Vert  a|0]$ maps to $[S_0\Vert  0|a]$ and vice versa, and all the other paths are fixed. Observe that this is, for groupoids, the closest we can get to a ``canonical flip''. The family of left quasigroup operations $\set{\bullet_\lambda}_\lambda$, for the vertices $\lambda$ in $\Kk_1$, is described in Table \ref{table:bullet_isolated}; while $a\bullet_{S_0} b = a+b$. See also \cite[Example 5.2]{matsumoto2011combinatorialaspects}, \cite[Example 5.3]{matsu}.
\begin{table}[t]
    \centering
    \begin{tabular}{c|ccc}
         $\bullet_{S_1}$& 0 & 1 & 2  \\ \hline
        0 &0& 1& 2 \\
        1&1& 0& 2\\
        2 &2& 0 & 1
    \end{tabular}\qquad \begin{tabular}{c|ccc}
         $\bullet_{S_2}$& 0 & 1 & 2  \\ \hline
        0 &0& 1& 2 \\
        1&1& 2& 0\\
        2 &2& 1 & 0
    \end{tabular}\qquad 
    \begin{tabular}{c|ccc}
         $\bullet_{S_3}$& 0 & 1 & 2  \\ \hline
        0 &0& 1& 2 \\
        1&1& 0& 2\\
        2 &2& 1 & 0
    \end{tabular}
    \caption{The family of left quasigroup operations on $\Kk_1$.}
    \label{table:bullet_isolated}
\end{table}
\end{example}
\begin{example}\label{ex:Z3_initials}
    The maximal dynamical skew brace of $A = \Z/3\Z$ is obtained by attaching initial vertices to the quiver of Example \ref{ex:Z3}. The initial vertices in $\Ss_A$ are
    \begin{align*}
		&R_0 := \set{(0,-\id), (1,-\id), (2,-\id)}, & & R_1 := \set{(0,-\id), (1,-\id), (2,\id)},\\
		&R_2 := \set{(0,-\id), (1,\id), (2,-\id)}, && R_3 := \set{(0,-\id), (1,\id), (2,\id)},
	\end{align*}
 and the resulting quiver is as follows (the initial vertices are in gray):
 $$
\begin{tikzcd}
\textcolor{gray}{R_3}\ar[rrr, color=gray, "1"description]\ar[rrd, color=gray, "2"description]\ar[rrrrd, color=gray, "0"description]& & & S_2\ar[ld,bend left=15, "1"description]\ar[rd,bend left=15, "2"description] \ar[loop above, "0"]& & &&\\
&&S_1\ar[ru,bend left=15, "2"description]\ar[rr,bend left=15, "1"description]\ar[loop left, "0"]& &S_3\ar[ll,bend left=15,"1"description]\ar[lu, bend left=15, "2"description]\ar[loop right, "0"]& S_0\ar[loop, out=60, in=120, looseness=4, "0"description]\ar[loop, out=50, in=130, looseness=7, "1"description]\ar[loop, out=40, in=140, looseness=10, "2"description]&&\textcolor{gray}{R_0}\ar[ll, color=gray, "1"description]\ar[ll, bend left=20, color=gray, "0"description]\ar[ll, bend right=20, color=gray, "2"description]\\
\textcolor{gray}{R_2}\ar[rru, color=gray, "2"description]\ar[rrruu, color=gray, bend left=20, "0"description]\ar[rrrru, color=gray, "1"description] & &\textcolor{gray}{R_1}\ar[u, color=gray, "0"description]\ar[ruu, color=gray, bend right=10,"1"description]\ar[rru, color=gray, "2"description] &  &  &  & &
\end{tikzcd}$$
\end{example}
\begin{example}\label{ex:Z4}
     Let $A=\Z/4\Z$, $\Aut(A)=\set{\pm \id}$. The following is the family $\Ss^0_A$:
    	\begin{align*}
	S_0 =& \set{(0,\id),\; (1,\id),\; (2,\id),\; (3,\id)}; & S_2 =& \set{(0,\id),\; (1,\id),\; (2,-\id),\; (3,-\id)}; \\ 
	S_4 =& \set{(0,\id),\; (1,-\id),\; (2,\id),\; (3,\id)}; & S_1 =& \set{(0,\id),\; (1,-\id),\; (2,\id),\; (3,-\id)}; \\ 
	S_5 =& \set{(0,\id),\; (1,\id),\; (2,-\id),\; (3,\id)}; & S_3 =& \set{(0,\id),\; (1,-\id),\; (2,-\id),\; (3,\id)}; \\
	S_6 =& \set{(0,\id),\; (1,\id),\; (2,\id),\; (3,-\id)}; &S_7 =& \set{(0,\id),\; (1,-\id),\; (2,-\id),\; (3,-\id)}.
	\end{align*}
 The corresponding quiver is a union of four connected components $\Kk_0:= \set{S_0}$, $\Kk_1:=\set{S_1}$, $\Kk_{2,3}:=\set{S_2,S_3}$, $\Kk_{4,5,6,7}:=\set{S_4, S_5, S_6, S_7}$.

 The component $\Kk_{4,5,6,7}$ is the following quiver:
 $$\begin{tikzcd}
     S_4\ar[loop left, "0"]\ar[rr, bend left=10,"3"description] \ar[rrdd, bend left=20,"1"description]\ar[dd, bend left=10,"2"description]&& S_5\ar[loop right, "0"]\ar[ll, bend left=10,"1"description]\ar[dd, bend left=10,"2"description]\ar[lldd, bend left=20,"3"description]\\ && \\
     S_6\ar[loop left, "0"]\ar[rr, bend left=10,"3"description]\ar[uu, bend left=10,"2"description]\ar[rruu, bend left=20,"1"description] && S_7\ar[loop right, "0"]\ar[uu, bend left=10,"2"description]\ar[ll, bend left=10,"3"description]\ar[lluu, bend left=20,"1"description]
 \end{tikzcd}$$
 
 The braiding, restricted on $\Kk_{4,5,6,7}$, acts as follows:
 \begin{align*}
     &[S_4\Vert 3|1]\mmaps [S_4\Vert1|1]     & &[S_4\Vert1|2]\mmaps [S_4\Vert2|1] & &[S_4\Vert2|3]\mmaps[S_4\Vert3|2]\\
     &[S_5\Vert1|3]\mmaps [S_5\Vert3|1] &&[S_5\Vert3|2]\mmaps [S_5\Vert2|1]  &&[S_5\Vert2|3]\mmaps[S_5\Vert1|2]\\
     &[S_6\Vert3|3]\mmaps[S_6\Vert1|3] && [S_6\Vert3|2]\mmaps[S_6\Vert2|3]&&[S_6\Vert1|2]\mmaps[S_6\Vert2|1]\\
     &[S_7\Vert3|3]\mmaps[S_7\Vert1|1] &&[S_7\Vert2|1]\mmaps[S_7\Vert3|2] &&[S_7\Vert2|3]\mmaps[S_7\Vert1|2]\\
     & &&[S_i\Vert a|0]\mmaps[S_i\Vert0|a]&&
 \end{align*}
 and leaves all the other paths unchanged. The family of left quasigroup structures $\set{\bullet_\lambda}_{\lambda}$, for the vertices $\lambda$ in $\Kk_{4,5,6,7}$, is described in Table \ref{table:bullet_K4567}.
 \begin{table}[t]
     \centering
     \begin{tabular}{c|cccc}
$\bullet_{S_4}$ & 0& 1& 2& 3\\ \hline
0 & 0& 1& 2& 3\\
1& 1& 0& 3& 2\\
2& 2& 1& 0& 3\\
3& 3& 0& 1& 2
\end{tabular}\qquad \begin{tabular}{c|cccc}
$\bullet_{S_5}$ & 0& 1& 2& 3\\ \hline
0 & 0& 1& 2& 3\\
1& 1& 2& 3& 0\\
2& 2& 1& 0& 3\\
3& 3& 0& 1& 2
\end{tabular}
\smallskip

\begin{tabular}{c|cccc}
$\bullet_{S_6}$ & 0& 1& 2& 3\\ \hline
0 & 0& 1& 2& 3\\
1& 1& 2& 3& 0\\
2& 2& 3& 0& 1\\
3& 3& 2& 1& 0
\end{tabular}\qquad \begin{tabular}{c|cccc}
$\bullet_{S_7}$ & 0& 1& 2& 3\\ \hline
0 & 0& 1& 2& 3\\
1& 1& 0& 3& 2\\
2& 2& 1& 0& 3\\
3& 3& 2& 1& 0
\end{tabular}
     \caption{The family of left quasigroup structures of $\Kk_{4,5,6,7}$.}
     \label{table:bullet_K4567}
 \end{table}
 The component $\Kk_{2,3}$ is the following quiver:
 $$\begin{tikzcd}
     S_2\ar[loop, out=-150, in=150, looseness=3,"0"description]\ar[loop, out=-140, in=140, looseness=6, "3"description]\ar[rr, bend left=10, "2"description]\ar[rr, bend left=30, "1"description] && S_3\ar[loop, out=-30, in=30, looseness=3, "0"description]\ar[loop, out=-40, in=40, looseness=6, "1"description]\ar[ll, bend left=10, "2"description]\ar[ll, bend left=30, "3"description]
 \end{tikzcd}$$
 
 The braiding restricted to $\Kk_{2,3}$ acts as follows:
 \begin{align*}
 &[S_2\Vert 3|3]\mmaps [S_2\Vert 1|3] &&[S_2\Vert 2|3]\mmaps[S_2\Vert 1|2] &&[S_2\Vert 3|2]\mmaps[S_2\Vert 2|1]\\
 &[S_3\Vert 1|1]\mmaps[S_3\Vert 3|1]&&[S_3\Vert 3|2]\mmaps[S_3\Vert 2|1]&&[S_3\Vert 1|2]\mmaps[S_3\Vert 2|3]\\
 & &&[S_i\Vert a|0]\mmaps[S_i\Vert 0|a]&&
 \end{align*}
 and all the other paths are unchanged. The left quasigroup structures $\set{\bullet_\lambda}_\lambda$, for $\lambda$ in $\Kk_{2,3}$, are described in Table \ref{table:bullet_K23}.
 \begin{table}[t]
\begin{center}
\begin{tabular}{c|cccc}
$\bullet_{S_2}$ & 0& 1& 2& 3\\ \hline
0 & 0& 1& 2& 3\\
1& 1& 2& 3& 0\\
2& 2& 1& 0& 3\\
3& 3& 2& 1& 0
\end{tabular}\qquad 
\begin{tabular}{c|cccc}
$\bullet_{S_3}$ & 0& 1& 2& 3\\ \hline
0 & 0& 1& 2& 3\\
1& 1& 0& 3& 2\\
2& 2& 1& 0& 3\\
3& 3& 0& 1&2
\end{tabular}
\end{center}
\caption{The family of left quasigroup structures of $\Kk_{2,3}$}\label{table:bullet_K23}\end{table}
 The braiding restricted to $\Kk_1$ sends
 \begin{align*}
     & [S_1\Vert 1|1]\mmaps [S_1\Vert 3|3]&&[S_1\Vert 1|2]\mmaps[S_1\Vert 2|1]\\
     &[S_1\Vert 2|3]\mmaps[S_1\Vert 3|2]&& [S_1\Vert a|0]\mmaps [S_1\Vert 0|a] 
 \end{align*}and leaves the other paths unchanged. The corresponding left quasigroup operation is described in Table \ref{table:bullet_K1}. 
 \begin{table}[t]
     \centering
     \begin{tabular}{c|cccc}
$\bullet_{S_1}$ & 0& 1& 2& 3\\ \hline
0 & 0& 1& 2& 3\\
1& 1& 0& 3& 2\\
2& 2& 3 &0& 1\\
3& 3& 2& 1&0
\end{tabular}
     \caption{The left quasigroup operation $\bullet_{S_1}$ on $\Kk_1$.}
     \label{table:bullet_K1}
 \end{table}
 Finally, the braiding restricted to $\Kk_0$ is just the canonical flip, with $a\bullet_{S_0} b = a+b$.
\end{example} 
\subsection{On the combinatorics of dynamical skew braces} The following remark provides a hint on the shape of the quiver $\Ss^0_A$. Henceforth, when we say ``the \emph{shape} of a quiver'', we mean the geometry of the directed graph, where the arcs are unlabelled, and the vertices have no name---in other words, the \emph{weak isomorphism class} of the quiver.
\begin{remark}\label{maximal_symmetric_dbrace}
Let $(A,\cdot)$ be a group, and let $\Ss^0_A:= \set{S\subset \Hol(A)\text{ regular }, (1_A,\id_A)\in S}$, associated with the dynamical skew brace $(A, \cdot, \Ss^0_A)$. By Remark \ref{zero_symmetric_string_of_integers}, the shape of the quiver of $(A, \cdot, \Ss^0_A)$ is determined by the string of integers $\set{N^A_s}_s$. Therefore, every group $(A,\cdot)$ is uniquely associated with a string of integers $N^A_s$. These invariants satisfy \begin{equation}\label{eq:relation_NAs}\sum_s sN^A_s = |\Ss^0_A| = |\Aut(A)|^{|A|-1},\end{equation}
and $N^A_s = 0$ if $s\nmid |A|$.
\end{remark}
It is natural to try computing these invariants on small groups. However, the problem of computing $\set{N^A_s}_s$ is very broad, as the following remark shows.
\begin{remark}\label{rem:N_1_is_number_of_sb}
A skew brace is a special kind of dynamical skew brace, with a singleton as the set of vertices. We know from \cite[Theorem 4.2]{guarnieri2017skew} that skew braces $(A,\cdot,\bullet)$ on $(A,\cdot)$ are in a bijective correspondence with regular subgroups of $\Hol(A)$, i.e.\@ with regular dynamical subgroups with a single vertex: these are exactly the isolated vertices of $\Ss^0_A$. In other words, $N^A_{1}$ is the number of operations $\bullet$ such that $(A,\cdot, \bullet)$ is a skew brace. Recall that these skew braces are isomorphic if and only if the corresponding regular subgroups of $\Hol(A)$ are conjugated \cite[Proposition 4.3]{guarnieri2017skew}.
\end{remark}
\begin{example}
    Let $p$ be prime, $A := \Z/p\Z$. Then, one can prove that $N^A_1 = 1$, $N^A_p = \left((p-1)^{(p-1)}-1\right)/p$. This can be proven directly, by showing that $S = \set{(a, \id_A)\mid a\in A}$ is the only isolated vertex. However, Remark \ref{rem:N_1_is_number_of_sb} offers an alternative solution: since $N^A_1$ is the number of brace structures $(\Z/p\Z, +, \bullet)$ on $(\Z/p\Z, +)$, it suffices to observe that $\bullet = +$ is the only possibility. Indeed, $a\mapsto (b\mapsto -a+a\bullet b)$ is a homomorphism $\Z/p\Z\to \Aut(\Z/p\Z)\cong \Z/(p-1)\Z$, hence it must be the constant homomorphism $a\mapsto \id_A$. This is well known, and appears in many classification works on braces and skew braces.
\end{example}
\begin{example}
    Let $A:= \Z/4\Z$. One has $N^A_1 = 2$, $N^A_2= 1$, $N^A_4 = 1$; see Example \ref{ex:Z4}.
\end{example}
\begin{example}
    Let $A:= \Z/2\Z \times \Z/2\Z$. Then, one has $N^A_1 = 4$, $N^A_2 = 6$, $N^A_4 = 50$. The invariants have been computed with \textsc{gap} \cite{GAP4}, and the computation took 3241 milliseconds. Computations get easily prohibitive, even with a computer algebra system, for larger $|A|$ (and especially for larger $|\Aut(A)|$).
\end{example}
We shall henceforth identify dynamical skew braces with their associated braided groupoid. Therefore, we shall speak of ``connected components'' or ``vertices'' of a dynamical skew brace; and this language is made valid through the Matsumoto--Shimizu functor $\Quiv$.
\begin{theorem}\label{thm:maximal_zerosymm_dbrace}
Let $(A,\cdot)$ be a fixed group. Let $(A, \cdot, \Ss)$ be a dynamical skew brace, with $\Ss = \Ss_{\mathrm{in}}\sqcup \Ss_{\mathrm{un}}$ as in Theorem \ref{groupoid_structure_on_Q(A)}, then
    \begin{enumerate}
    \item if $S\in \Ss_{\mathrm{un}}$, and $S'\in\Ss^0_A$ is in the same connected component of $\Ss^0_A$ as $S$, then $S'\in \Ss_{\mathrm{un}}$;
    \end{enumerate}
   i.e., the quiver $Q =\Quiv(A,\phi)$ is obtained as a union of connected components of $(A,\cdot, \Ss^0_A)$, united with the additional set of vertices $\Ss_{\mathrm{in}}$ and the additional set of arrows $Q(\Ss_{\mathrm{in}}, \Ss_{\mathrm{un}})$. Moreover, these arrows from $\Ss_{\mathrm{in}}$ to $\Ss_{\mathrm{un}}$ satisfy the following properties:
    \begin{enumerate}\setcounter{enumi}{1}
        \item for all $S\in \Ss_{\mathrm{in}}$, and for all $S', S''\in \Ss_{\mathrm{un}}$ in the same connected component, there is a bijection $Q(S,S')\cong Q(S', S'')$;
        \item for all $S\in \Ss_{\mathrm{in}}$, and for all $S', S''\in \Ss_{\mathrm{un}}$ in the same connected component, there is a bijection $Q(S, S')\cong Q(S, S'')$.
    \end{enumerate}
\end{theorem}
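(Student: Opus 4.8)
The plan is to exploit the description of the vertices of $Q=\Quiv(A,\phi)$ as regular subsets of $\Hol(A)$ (Proposition \ref{dynamical_braces_and_dynamical_subgroups}), together with the semiloopoid structure of Theorem \ref{groupoid_structure_on_Q(A)}. Throughout, I identify a vertex with the corresponding subset $S\subseteq\Hol(A)$, so that the unital vertices $\Ss_{\mathrm{un}}$ are precisely those $S$ with $(1_A,\id_A)\in S$, and hence $\Ss_{\mathrm{un}}\subseteq\Ss^0_A$. The crucial observation is that, because $f^S_a$ is determined by $S$ alone, the arrows issuing from a vertex $S$ are literally the same in $Q$ and in the maximal zero-symmetric quiver $(A,\cdot,\Ss^0_A)$, whenever $S$ lies in both.

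For part \emph{(i)} I would show that $\Ss_{\mathrm{un}}$ is closed under adjacency in the groupoid $\Ss^0_A$, and hence is a union of its connected components. Let $S\in\Ss_{\mathrm{un}}$ and let $T$ be adjacent to $S$ in $\Ss^0_A$. If the connecting arrow points $S\to T$, then $T=\phi(S,a)$ for some $a$; since $S$ is a vertex of the dynamical subgroup $\Ss$, the target $\phi(S,a)=T$ is again a vertex of $\Ss$, and $T$ carries an incoming arrow, so $T$ is unital by Remark \ref{looped_vertex_equivalent_conditions}, i.e. $T\in\Ss_{\mathrm{un}}$. If instead the arrow points $T\to S$, I first invert it inside the groupoid $\Ss^0_A$ to reduce to the previous case. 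This proves \emph{(i)}. The structural ``i.e.'' clause then follows: the full subquiver of $Q$ on $\Ss_{\mathrm{un}}$ is a groupoid (Theorem \ref{groupoid_structure_on_Q(A)}) whose arrows coincide with those of $\Ss^0_A$, hence a union of $\Ss^0_A$-components; initial vertices have no incoming arrows (Remark \ref{looped_vertex_equivalent_conditions}), so every arrow out of $\Ss_{\mathrm{in}}$ lands in a unital vertex, giving exactly the arrow set $Q(\Ss_{\mathrm{in}},\Ss_{\mathrm{un}})$ and no arrows internal to $\Ss_{\mathrm{in}}$.

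Claim \emph{(iii)} is the softest: for $S',S''$ in a common component $\Kk$ of $\Ss_{\mathrm{un}}$, choose an arrow $g\in Q(S',S'')$ (which exists, $\Kk$ being a complete quiver of positive degree by Remark \ref{quiver_of_groupoid_is_homogeneous}), and note that right-composition $x\mapsto x\bullet g$ is a bijection $Q(S,S')\to Q(S,S'')$ with inverse $x\mapsto x\bullet g^{-\bullet}$, exactly as in Remark \ref{quiver_of_groupoid_is_homogeneous}. The main work is \emph{(ii)}. First I would show that all $|A|$ arrows leaving a fixed initial vertex $S$ land in a single component $\Kk$: given two such arrows $x=[S\Vert a]$ and $x'=[S\Vert a']$ with targets $S_1,S_2$, the left-division $z:=x\backslash x'$ of the semiloopoid satisfies $x\bullet z=x'$, whence $\source(z)=S_1$ and $\target(z)=S_2$; thus $z\in Q(S_1,S_2)$ connects $S_1$ and $S_2$ inside $\Ss_{\mathrm{un}}$, so they lie in the same component.

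With this in hand, \emph{(ii)} becomes a counting argument. Writing $k=|\Obj(\Kk)|$ and $d$ for the degree of $\Kk$, Corollary \ref{cor:Q(A)_homogeneous} gives $dk=|A|$ and $|Q(S',S'')|=d$ for $S',S''\in\Kk$. Since every one of the $|A|$ arrows out of $S$ ends in $\Kk$, summing the (by \emph{(iii)} constant) quantity $e:=|Q(S,S')|$ over the $k$ vertices of $\Kk$ yields $ek=|A|=dk$, so $e=d=|Q(S',S'')|$, which is \emph{(ii)}. I expect the identification of the single target component, and its interaction with the internal degree, to be the main obstacle; an alternative route computes $|Q(S,S')|=|\mathrm{Stab}_{\Hol(A)}(S)|$ directly and uses that left translation conjugates stabilizers, but the left-division plus counting argument keeps everything inside the semiloopoid formalism already developed.
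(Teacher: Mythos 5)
Your parts (\textit{i}) and (\textit{iii}) are correct and land close to the paper's own argument. For (\textit{i}), your adjacency-closure argument (using that the outgoing arrows of a unital vertex are literally the same in $Q$ and in $\Ss^0_A$, plus invertibility inside the groupoid $\Ss^0_A$) is a mild repackaging of the paper's one-line proof that any $S'$ in the component of $S$ has the form $(a,f)^{-1}S$ with $(a,f)\in S$. Your right-translation bijection $x\mapsto x\bullet g$ for (\textit{iii}) is exactly the paper's translation-by-$\Hol(A)$-elements argument, rewritten in the semiloopoid formalism; it is valid, since $g^{-\bullet}$ exists in the groupoid part (Theorem \ref{groupoid_structure_on_Q(A)}) and the right-unit axiom applies to arrows with initial source. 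Your left-division lemma that all $|A|$ arrows out of a fixed initial vertex land in a single component is also correct, and is in fact a bonus: the paper only extracts this afterwards, in the corollary following Theorem \ref{thm:maximal_zerosymm_dbrace}, by combining (\textit{ii}), (\textit{iii}) and the out-degree count. (Like the paper's proof, your (\textit{ii}) can only concern the component actually receiving arrows from $S$; that is the intended reading, as the corollary's use confirms, so no harm there. A cosmetic point: Corollary \ref{cor:Q(A)_homogeneous} is stated for unital $\Ss$, so to get $dk=|A|$ you should apply it to $\Ss_{\mathrm{un}}$, which is itself a unital dynamical subgroup, or simply count the out-degree $|A|$ of a unital vertex inside its complete component.)

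The genuine gap is in your proof of (\textit{ii}): the counting step silently assumes $A$ finite. The theorem is stated for an arbitrary group, and with infinite cardinals $e\cdot k = d\cdot k$ does not imply $e=d$ (take $k=\aleph_0$, $e=1$, $d=2$); so for infinite $A$ your argument produces no bijection $Q(S,S')\cong Q(S',S'')$. The paper's proof is cardinality-free: fixing one arrow from $S$ to $S'$, i.e.\@ one $(a,f)\in S$ with $(a,f)^{-1}S=S'$, left translation by $(a,f)$ in $\Hol(A)$ is an explicit bijection $Q(S',S'')\to Q(S,S'')$. In your own formalism this is just $z\mapsto x\bullet z$ for a fixed $x\in Q(S,S')$: it lands in $Q(S,S'')$ since $\target(x\bullet z)=\target(z)$, is injective because $x\bullet\blank$ is, and is surjective because for $w\in Q(S,S'')$ the element $z:=x\backslash w$ has source $S'$ and target $\target(x\bullet z)=\target(w)=S''$. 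Composing with your (\textit{iii}) then gives $Q(S,S')\cong Q(S',S'')$ with no counting at all, and your finiteness hypothesis evaporates. Your sketched stabilizer alternative would also work in general (one checks $Q(S,S')=\mathrm{Stab}(S)(a,f)$ for any single arrow $(a,f)$, and conjugation identifies stabilizers along arrows), but as written it is only a one-sentence sketch, so the burden of (\textit{ii}) in your submission rests on the counting argument, which does not prove the statement as stated.
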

\begin{proof}
If $S$ is a unital vertex in $\Ss$, then every element $S'\in \Ss^0_A$ in the same connected component as $S$ is obtained as $S'= (a, f)^{-1}S$ for some $(a,f)\in S$: therefore, $S'$ must also belong to $\Ss$. This proves (\textit{i}). The rest follows from Lemma \ref{lemma:semiloopoid_bijections}.
\end{proof}
\begin{figure}[p]
    \centering
    \begin{subfigure}{0.8\linewidth}
        \centering
        \begin{tikzpicture}[x=0.8cm, y=0.8cm]
            \node() at (0,0) {$\bullet$};
            \node() at (2,0) {$\bullet$};
            \node() at (1,1.3) {$\bullet$};
            \node() at (4,1) {$\bullet$};
            \draw[-Stealth] (0,0) to[bend left] (2,0);
            \draw[-Stealth] (2,0) to[bend left] (0,0);
            \draw[-Stealth] (0,0) to[bend left] (1,1.3);
            \draw[-Stealth] (1,1.3) to[bend left] (0,0);
            \draw[-Stealth] (1,1.3) to[bend left] (2,0);
            \draw[-Stealth] (2,0) to[bend left] (1,1.3);
            \draw[-Stealth] (0,0) arc[radius=.3, start angle=45, end angle=405];
            \draw[-Stealth] (2,0) arc[radius=.3, start angle=135, end angle=500];
            \draw[-Stealth] (1,1.3) arc[radius=.3, start angle=-90, end angle=270];
            \draw[-Stealth] (4,1) arc[radius=.3, start angle=45, end angle=405];
            \draw[-Stealth] (4,1) arc[radius=.5, start angle=45, end angle=405];
            \draw[-Stealth] (4,1) arc[radius=.7, start angle=45, end angle=405];
            \node(mark) at (5.5,0) {{\huge\ding{51}}};
            \node[white]() at (-2,0) {};
        \end{tikzpicture}
        \caption{The quiver associated with the maximal zero-symmetric dynamical skew brace over $A = \Z/3\Z$; see Example \ref{ex:Z3}.}\label{subfig:maximal_zerosymm}
    \end{subfigure}
    
    \begin{subfigure}{0.8\linewidth}
        \centering
        \begin{tikzpicture}[x=0.8cm, y=0.8cm]
        \node() at (0,0) {$\bullet$};
            \node() at (2,0) {$\bullet$};
            \node() at (1,1.3) {$\bullet$};
            \draw[-Stealth] (0,0) to[bend left] (2,0);
            \draw[-Stealth] (2,0) to[bend left] (0,0);
            \draw[-Stealth] (0,0) to[bend left] (1,1.3);
            \draw[-Stealth] (1,1.3) to[bend left] (0,0);
            \draw[-Stealth] (1,1.3) to[bend left] (2,0);
            \draw[-Stealth] (2,0) to[bend left] (1,1.3);
            \draw[-Stealth] (0,0) arc[radius=.3, start angle=45, end angle=405];
            \draw[-Stealth] (2,0) arc[radius=.3, start angle=135, end angle=500];
            \draw[-Stealth] (1,1.3) arc[radius=.3, start angle=-90, end angle=270];
            \node(mark) at (5.5,0) {{\huge\ding{51}}};
            \node[white]() at (-2,0) {};
        \end{tikzpicture}
        \caption{The quiver of an admissible zero-symmetric dynamical skew brace over $A = \Z/3\Z$.}\label{subfig:admissible_zerosymm}
    \end{subfigure}

    \begin{subfigure}{0.8\linewidth}
        \centering
        \begin{tikzpicture}[x=0.8cm, y=0.8cm]
            \node() at (0,0) {$\bullet$};
            \node() at (2,0) {$\bullet$};
            \node() at (1,1.3) {$\bullet$};
            \node[gray]() at (4,1.5) {$\bullet$};
            \node[gray]() at (4.3,1) {$\bullet$};
            \draw[-Stealth] (0,0) to[bend left] (2,0);
            \draw[-Stealth] (2,0) to[bend left] (0,0);
            \draw[-Stealth] (0,0) to[bend left] (1,1.3);
            \draw[-Stealth] (1,1.3) to[bend left] (0,0);
            \draw[-Stealth] (1,1.3) to[bend left] (2,0);
            \draw[-Stealth] (2,0) to[bend left] (1,1.3);
            \draw[-Stealth] (0,0) arc[radius=.3, start angle=45, end angle=405];
            \draw[-Stealth] (2,0) arc[radius=.3, start angle=135, end angle=500];
            \draw[-Stealth] (1,1.3) arc[radius=.3, start angle=-90, end angle=270];
            \draw[-Stealth, color=gray] (4,1.5) to (0,0);
            \draw[-Stealth, color=gray] (4,1.5) to (2,0);
            \draw[-Stealth, color=gray] (4,1.5) to (1,1.3);
            \draw[-Stealth, color=gray] (4.3,1) to (0,0);
            \draw[-Stealth, color=gray] (4.3,1) to (2,0);
            \draw[-Stealth, color=gray] (4.3,1) to (1,1.3);
            \node(mark) at (5.5,0) {{\huge\ding{51}}};
            \node[white]() at (-2,0) {};
        \end{tikzpicture}
        \caption{An admissible quiver for some dynamical skew brace over $A = \Z/3\Z$: namely, the one with unital vertices $\set{(0,\id), (1,\id), (2, -\id)}$, $\set{(0,\id), (1,-\id), (2, \id)}$, $\set{(0,\id), (1,-\id), (2, -\id)}$, and initial vertices $\set{(0,-\id), (1,\id), (2, \id)}$, $\set{(0,-\id), (1, -\id), (2,\id)}$.}\label{subfig:admissible_nonzerosymm}
    \end{subfigure}

\begin{subfigure}{0.8\linewidth}
        \centering
        \begin{tikzpicture}[x=0.8cm, y=0.8cm]
            \node() at (4,1) {$\bullet$};
            \draw[-Stealth] (4,1) arc[radius=.3, start angle=45, end angle=405];
            \draw[-Stealth] (4,1) arc[radius=.5, start angle=45, end angle=405];
            \draw[-Stealth] (4,1) arc[radius=.7, start angle=45, end angle=405];
            \node[gray]() at (1,0.5) {$\bullet$};
            \draw[-Stealth, color=gray] (1,0.5) to[bend left] (4,1);
            \draw[-Stealth, color=gray] (1,0.5) to[bend right] (4,1);
            \draw[-Stealth, color=gray] (1,0.5) to (4,1);
            \node(mark) at (5.5,0) {{\huge\ding{51}}};
            \node[white]() at (-2,0) {};
        \end{tikzpicture}
        \caption{An admissible quiver for some dynamical skew brace over $A = \Z/3\Z$: namely, the one with unital vertex $\set{(0,\id), (1,\id), (2, \id)}$, and initial vertex $\set{(0,-\id), (1,-\id), (2, -\id)}$.}\label{subfig:other_admissible_nonzerosymm}
    \end{subfigure}

\begin{subfigure}{0.8\linewidth}
        \centering
        \begin{tikzpicture}[x=0.8cm, y=0.8cm]
            \node() at (4,1) {$\bullet$};
            \draw[-Stealth] (4,1) arc[radius=.3, start angle=45, end angle=405];
            \draw[-Stealth] (4,1) arc[radius=.5, start angle=45, end angle=405];
            \draw[-Stealth] (4,1) arc[radius=.7, start angle=45, end angle=405];
            \node[gray]() at (1,0.5) {$\bullet$};         
            \draw[-Stealth, color=gray] (1,0.5) to (4,1);
            \node(mark) at (5.5,0) {{\huge\ding{55}}};
            \node[white]() at (-2,0) {};
        \end{tikzpicture}
        \caption{This quiver is not admissible as the quiver of a dynamical skew brace: it violates Theorem \ref{thm:maximal_zerosymm_dbrace} (\textit{ii}).}\label{subfig:nonadmissible_iii}
    \end{subfigure}

    \begin{subfigure}{0.8\linewidth}
        \centering
        \begin{tikzpicture}[x=0.8cm, y=0.8cm]
            \node() at (0,0) {$\bullet$};
            \node() at (2,0) {$\bullet$};
            \node() at (1,1.3) {$\bullet$};
            \node[gray]() at (4,1.5) {$\bullet$};
            \draw[-Stealth] (0,0) to[bend left] (2,0);
            \draw[-Stealth] (2,0) to[bend left] (0,0);
            \draw[-Stealth] (0,0) to[bend left] (1,1.3);
            \draw[-Stealth] (1,1.3) to[bend left] (0,0);
            \draw[-Stealth] (1,1.3) to[bend left] (2,0);
            \draw[-Stealth] (2,0) to[bend left] (1,1.3);
            \draw[-Stealth] (0,0) arc[radius=.3, start angle=45, end angle=405];
            \draw[-Stealth] (2,0) arc[radius=.3, start angle=135, end angle=500];
            \draw[-Stealth] (1,1.3) arc[radius=.3, start angle=-90, end angle=270];
            \draw[-Stealth, color=gray] (4,1.5) to (0,0);
            \draw[-Stealth, color=gray] (4,1.5) to (1,1.3);
            \node(mark) at (5.5,0) {{\huge\ding{55}}};
            \node[white]() at (-2,0) {};
        \end{tikzpicture}
        \caption{This quiver is not admissible as the quiver of a dynamical skew brace: it violates Theorem \ref{thm:maximal_zerosymm_dbrace} (\textit{ii}) and (\textit{iii}).}\label{subfig:nonadmissible_iv}
    \end{subfigure}
    \caption{Some examples of admissible and nonadmissible quivers for dynamical skew braces over $A = \Z/3\Z$.}
    \label{fig:admissible_quivers}
\end{figure}
A number of examples is depicted in Figure \ref{fig:admissible_quivers}. This makes clear that a quiver, if it is realised by some dynamical skew brace over $(A,\cdot)$, must be obtained by adding initial vertices and arrows to some subquiver of $\Ss^0_A$. Conditions (\textit{ii}) and (\textit{iii}) from Theorem \ref{thm:maximal_zerosymm_dbrace} are necessary for a quiver to be a subquiver of $\Ss_A$ associated with some dynamical skew brace, but not sufficient; see Figure \ref{fig:iii-iv-not-sufficient}.
\begin{figure}[t]
    \centering
  
\begin{tikzpicture}[x=0.8cm, y=0.8cm]
            \node() at (0,0) {$\bullet$};
            \node() at (2,0) {$\bullet$};
            \node() at (1,1.3) {$\bullet$};
            \node() at (7.5,1) {$\bullet$};
            \node[gray]() at (4,1.5) {$\bullet$};
            \node[gray]() at (5,0.5) {$\bullet$};
            \node[gray]() at (4.3,1) {$\bullet$};
            \node[gray]() at (5.2,0.2) {$\bullet$};
            \draw[-Stealth] (0,0) to[bend left] (2,0);
            \draw[-Stealth] (2,0) to[bend left] (0,0);
            \draw[-Stealth] (0,0) to[bend left] (1,1.3);
            \draw[-Stealth] (1,1.3) to[bend left] (0,0);
            \draw[-Stealth] (1,1.3) to[bend left] (2,0);
            \draw[-Stealth] (2,0) to[bend left] (1,1.3);
            \draw[-Stealth] (0,0) arc[radius=.3, start angle=45, end angle=405];
            \draw[-Stealth] (2,0) arc[radius=.3, start angle=135, end angle=500];
            \draw[-Stealth] (1,1.3) arc[radius=.3, start angle=-90, end angle=270];
            \draw[-Stealth, color=gray] (4,1.5) to (0,0);
            \draw[-Stealth, color=gray] (4,1.5) to (2,0);
            \draw[-Stealth, color=gray] (4,1.5) to (1,1.3);
            \draw[-Stealth, color=gray] (4.3,1) to (0,0);
            \draw[-Stealth, color=gray] (4.3,1) to (2,0);
            \draw[-Stealth, color=gray] (4.3,1) to (1,1.3);
            \draw[-Stealth] (7.5,1) arc[radius=.3, start angle=45, end angle=405];
            \draw[-Stealth] (7.5,1) arc[radius=.5, start angle=45, end angle=405];
            \draw[-Stealth] (7.5,1) arc[radius=.7, start angle=45, end angle=405];
            \draw[-Stealth, color=gray] (5,0.5) to (7.5,1);
            \draw[-Stealth, color=gray] (5,0.5) to[bend left] (7.5,1);
            \draw[-Stealth, color=gray] (5,0.5) to[bend right] (7.5,1);
            \draw[-Stealth, color=gray] (5.2,0.2) to (7.5,1);
            \draw[-Stealth, color=gray] (5.2,0.2) to[bend left] (7.5,1);
            \draw[-Stealth, color=gray] (5.2,0.2) to[bend right] (7.5,1);
            \node(mark) at (8.5,0) {{\huge\ding{55}}};
            \node[white]() at (-2,0) {};
        \end{tikzpicture}
    \caption{Although this quiver has an admissible number of vertices, and technically satisfies conditions (\textit{ii}) and (\textit{iii}) of Theorem \ref{thm:maximal_zerosymm_dbrace}, it does not correspond to any dynamical skew brace. Indeed, it is not a subquiver of the one in Example \ref{ex:Z3_initials}.}\label{fig:iii-iv-not-sufficient}
\end{figure}
\begin{corollary}
    Let $A$ be a finite group. Then, the shape of the quiver $\Ss_A$ is uniquely determined by two data: the quiver $\Ss^0_A$, and, for each connected component $\Kk$ of $\Ss^0_A$, the number $\mathrm{in}^A_\Kk$ of initial vertices $R$ with arrows into $\Kk$.
\end{corollary}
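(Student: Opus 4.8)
The plan is to describe the weak isomorphism class of $\Ss_A$ explicitly, as a disjoint union of connected components each of a shape that depends only on the stated data. First I would recall that, by construction, the vertex set of $\Ss_A$ splits as $\Ss^0_A\sqcup \Ss_{\mathrm{in}}$, the unital part being precisely $\Ss^0_A$; by Corollary \ref{cor:Q(A)_homogeneous} the latter is homogeneous of weight $|A|$, so each connected component $\Kk$ of $\Ss^0_A$ is a complete quiver of some degree $d_\Kk$ with $d_\Kk\,|\Obj(\Kk)| = |A|$. By Remark \ref{looped_vertex_equivalent_conditions}, an initial vertex $S$ carries neither loops nor incoming arrows, so its $|A|$ outgoing arrows (one for each element of the regular set $S$) all target unital vertices.

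The first key step is to show that all arrows issuing from a fixed initial vertex $S$ land in a single connected component of $\Ss^0_A$. Indeed, for $(a,f),(b,g)\in S$ the respective targets are $S' = (a,f)^{-1}S$ and $S'' = (b,g)^{-1}S$, and the computation in the proof of Theorem \ref{thm:maximal_zerosymm_dbrace}~(\textit{iii}) exhibits an arrow $S'\to S''$ in $\Ss^0_A$; hence $S'$ and $S''$ lie in the same component, and $S$ is attached to exactly one component $\Kk$.

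The second, and main, step is to pin down the multiplicity of the attachment. By Theorem \ref{thm:maximal_zerosymm_dbrace}~(\textit{iii}) the number $|Q(S,S')|$ of arrows from $S$ to a unital vertex $S'$ is the same for every $S'$ in $\Kk$; summing over the $|\Obj(\Kk)|$ vertices of $\Kk$ and using that the total is $|A|$, this common multiplicity equals $|A|/|\Obj(\Kk)| = d_\Kk$. Thus each initial vertex attached to $\Kk$ sends exactly $d_\Kk$ arrows to each vertex of $\Kk$, and no other arrows occur (there are none among initial vertices, and none from unital vertices back to initial ones). Consequently the connected component of $\Ss_A$ built on $\Kk$ is, as an unlabelled quiver, the complete quiver $\Kk$ of degree $d_\Kk$ on $|\Obj(\Kk)|$ vertices together with $\mathrm{in}^A_\Kk$ extra vertices each joined to every vertex of $\Kk$ by $d_\Kk$ arrows; this shape is visibly a function of $\Kk$ and of $\mathrm{in}^A_\Kk$ alone.

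Finally, since every initial vertex belongs to exactly one such component and distinct components of $\Ss^0_A$ yield distinct components of $\Ss_A$, the quiver $\Ss_A$ is the disjoint union of the pieces just described, so its weak isomorphism class depends only on $\Ss^0_A$ and on the string $(\mathrm{in}^A_\Kk)_\Kk$. The point requiring most care is the uniform-multiplicity assertion of the second step: it is exactly here that the homogeneity of $\Ss^0_A$ (Corollary \ref{cor:Q(A)_homogeneous}) combines with part~(\textit{iii}) of the theorem to force every attachment to look identical, so that a single integer per component records all the combinatorial information.
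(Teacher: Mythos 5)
Your proposal is correct and follows essentially the same route as the paper: both arguments show via Theorem \ref{thm:maximal_zerosymm_dbrace} that each initial vertex attaches to exactly one connected component $\Kk$ of $\Ss^0_A$, with uniform multiplicity $|A|/|\Obj(\Kk)| = d_\Kk$ of arrows to every vertex of $\Kk$, so that the single integer $\mathrm{in}^A_\Kk$ per component determines the shape. Your summation argument for the multiplicity and the explicit disjoint-union assembly merely spell out what the paper's terser proof asserts directly.
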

\begin{proof}
    If $R$ is an initial vertex in $\Ss^0_A$, then it has at least one arrow into some connected component $\Kk$ of $\Ss^0_A$. By Theorem \ref{thm:maximal_zerosymm_dbrace}, then, all arrows starting from $R$ must go into $\Kk$, and for each $S\in \Obj(\Kk)$ there are exactly $|A|/|\Obj(\Kk)|$ arrows from $R$ to $S$. Thus, in order to draw the shape of the quiver $\Ss_A$, it suffices to know the numbers $\mathrm{in}^A_\Kk$ for each connected component $\Kk$ of $\Ss^0_A$.
\end{proof}
Unlike the $N^A_s$'s, the numbers $\mathrm{in}^A_\Kk$ can easily be computed in explicit form.
\begin{proposition}\label{prop:in^A_K}
    For a connected component $\Kk$ of $\Ss^0_A$, with $s$ vertices, one has
    \[ \mathrm{in}^A_\Kk= s (|\Aut(A)|-1). \]
\end{proposition}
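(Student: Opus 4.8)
The plan is to realise the set of all regular subsets of $\Hol(A)$ that emit arrows into a fixed component $\Kk$ as a single left-translation orbit under $\Hol(A)$, and then apply the orbit--stabiliser theorem. Throughout I use the description of the component of a unital vertex $S$ from the proof of Theorem~\ref{thm:maximal_zerosymm_dbrace}: its vertices are exactly the subsets $(a,f)^{-1}S$ with $(a,f)\in S$, and for an arbitrary regular subset $R\in\Ss_A$ the targets of its outgoing arrows are the subsets $\alpha^{-1}R$ with $\alpha\in R$, each of which is unital because $\alpha^{-1}\alpha=(1_A,\id_A)$ lies in it. A short computation (the neighbours of any target $\alpha_0^{-1}R$ are again $\set{\alpha^{-1}R\mid \alpha\in R}$) shows that these targets constitute exactly one connected component $\Kk$ of $\Ss^0_A$; hence $R$ has an arrow into $\Kk$ iff \emph{every} target of $R$ lies in $\Kk$, which is the content of Theorem~\ref{thm:maximal_zerosymm_dbrace}.

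First I would fix a unital vertex $V\in\Obj(\Kk)$ and prove the key bijective description: a regular subset $R$ sends its arrows into $\Kk$ if and only if $R=\beta V$ for some $\beta\in\Hol(A)$. Indeed, if $V$ is a target of $R$ then $V=\alpha^{-1}R$ for some $\alpha\in R$, so $R=\alpha V$; conversely, since $(1_A,\id_A)\in V$ we get $\beta\in\beta V=R$ for any $\beta$, so $V=\beta^{-1}R$ is a target of $R$. The crucial point making this clean is that \emph{every} left translate $\beta V$ is automatically regular: writing $\beta=(c,h)$ and $V=\set{(a,g_a)\mid a\in A}$, the first coordinates of $\beta V$ are the elements $c\cdot h(a)$, and $a\mapsto c\cdot h(a)$ is a bijection of $A$, so $\beta V$ has exactly one element over each first coordinate. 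Consequently the regular subsets with arrows into $\Kk$ are precisely the orbit $\set{\beta V\mid \beta\in\Hol(A)}$, whose cardinality is $|\Hol(A)|/|\mathrm{Stab}(V)|$ with $\mathrm{Stab}(V)=\set{\gamma\in\Hol(A)\mid \gamma V=V}$.

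Next I would compute $|\mathrm{Stab}(V)|$ from $s$. Since $V$ is unital, $\mathrm{Stab}(V)$ is a subgroup contained in $V$, and $V$ is a disjoint union of right cosets $\mathrm{Stab}(V)\gamma$ with $\gamma\in V$; the distinct targets $\gamma^{-1}V$ are in bijection with these cosets, so their number, which is $s=|\Obj(\Kk)|$, equals $|V|/|\mathrm{Stab}(V)|=|A|/|\mathrm{Stab}(V)|$. Hence $|\mathrm{Stab}(V)|=|A|/s$ and, using $|\Hol(A)|=|A|\cdot|\Aut(A)|$, the number of regular subsets with arrows into $\Kk$ is $s\,|\Aut(A)|$. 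Finally, among these the \emph{unital} ones are exactly the $s$ vertices of $\Kk$ (a unital $R$ has its own component as its set of targets, equal to $\Kk$ iff $R\in\Kk$), so the \emph{initial} ones number $\mathrm{in}^A_\Kk=s\,|\Aut(A)|-s=s(|\Aut(A)|-1)$.

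The main obstacle I anticipate is the bookkeeping in identifying the fibre $\set{R \text{ regular}\mid R \text{ has arrows into }\Kk}$ with a genuine $\Hol(A)$-orbit: one must check both that left translation never destroys regularity (so the orbit map is well defined and surjective onto the fibre) and that the target component of $R$ is well defined and equals $\Kk$ exactly when $V$ is a target, so that no $R$ is counted for two different components. Once this identification is secure, the count is a direct orbit--stabiliser computation, with the stabiliser size dictated by $s$ through the coset decomposition of $V$.
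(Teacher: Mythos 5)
Your proof is correct, and it takes a genuinely different route from the paper's. The paper argues locally at a fixed unital vertex $S\in\Obj(\Kk)$: it counts the incoming arrows at $S$, splitting them into $|A|$ arrows from unital sources (one for each element of $S$) and $|A|(|\Aut(A)|-1)$ arrows from initial sources (one for each pair of an element of $S$ and an automorphism $h\neq\id_A$), and then divides by the multiplicity $|A|/s$ of arrows that each initial vertex sends into $S$, which is supplied by parts (\textit{ii})--(\textit{iii}) of Theorem \ref{thm:maximal_zerosymm_dbrace}. You instead count the vertices themselves, identifying the set of all regular subsets with arrows into $\Kk$ with the left-translation orbit $\set{\beta V\mid \beta\in\Hol(A)}$ of a fixed $V\in\Obj(\Kk)$; the two verifications you single out --- that left translates of a regular subset remain regular, and that the target set of any regular $R$ is exactly the object set of one component --- are indeed the load-bearing steps, and your arguments for both are sound. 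The orbit--stabiliser count then replaces the paper's arrow bookkeeping: your observation that $\mathrm{Stab}(V)$ is a subgroup of $\Hol(A)$ contained in $V$, that $V$ is a disjoint union of its right cosets, and that these cosets biject with the $s$ targets of $V$, gives $|\mathrm{Stab}(V)|=|A|/s$, hence orbit size $s\,|\Aut(A)|$, from which subtracting the $s$ unital translates (exactly $\Obj(\Kk)$) yields the formula. What each approach buys: the paper's is shorter once Theorem \ref{thm:maximal_zerosymm_dbrace} is in hand, though its backwards re-labelling of incoming arrows is somewhat delicate; yours is essentially self-contained, exhibits the object set of the whole component $\tilde{\Kk}$ of $\Ss_A$ as a single $\Hol(A)$-orbit (so the fact that each regular subset feeds exactly one component comes for free), and identifies the degree $|A|/s$ of the complete quiver $\Kk$ with the order of an honest subgroup of $\Hol(A)$, a structural fact the paper's arrow count leaves invisible. (Both arguments use finiteness of $A$, which is in force from the corollary preceding the proposition.)
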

\begin{proof}
    Let $S$ be a vertex in $\Kk$. Let $\tilde{\Kk}$ be the connected component of $S$ in $\Ss_A$, which includes $\Kk$ as a subquiver. All distinct incoming arrows in $S$ are obtained in one of the following ways:
    \begin{enumerate}
        \item An incoming arrow $R\to S$ with $R$ unital is obtained by the relation $S = (a,f)^{-1}R $ for some $(a,f)\in R$. The element $(a,f)^{-1} = (a,f)^{-1}(1_A,\id_A)$ lies in $S$. One has $R = (b,g)S$.
        \item An incoming arrow $R\to S$ with $R$ initial is obtained by the relation $S = (a,f)^{-1}R$ for $(a,f)\in R$, but now $(a,f)^{-1}$ does not lie in $S$. Observe that the vertex $R$ does not contain $(1_A, \id_A)$, but it does contain $(1_A, h)$ for some automorphism $h\in \Aut(A)\smallsetminus\set{\id_A}$. Thus there is an element $(b,g):= (a,f)^{-1}(1_A, h)$ in $S$ such that $R$ is retrieved as $R = (b,g)S$.
    \end{enumerate}
    %We now re-label the arrows ``backwards'': instead of giving labels to the outgoing arrows, we give labels to the incoming arrows. We do it as follows: the arrow $R\to S$ obtained by $(a,f)^{-1}R = S$ is re-labelled by $(b,g):= (a,f)^{-1}$ if $R$ is unital as in case (\textit{i}); and by $(b,g):= (a,f)^{-1}(1_A,h)$ in case (\textit{ii}).

    There are $|A|$ incoming arrows of type (\textit{i}): as many as the elements $(b,g)$ in $ S$. On the other hand, there are $|A| (|\Aut(A)|-1)$ incoming arrows of type (\textit{ii}): for each $(b,g)\in S$, there are as many arrows as the automorphisms $h\neq \id_A$. 

    Now let $R$ be an initial vertex in $\tilde{\Kk}$, and $S$ a unital vertex in $\Kk$. By Theorem \ref{thm:maximal_zerosymm_dbrace}, the number of arrows $R\to S$ is the same as the degree of the complete quiver $\Kk$, that is $|A|/s$. Therefore, the number of unital vertices in $\tilde{\Kk}$ is 
    \[ \mathrm{in}^A_\Kk = |A|(|\Aut(A)|-1) \cdot \frac{s}{|A|}= s(|\Aut(A)|-1). \]
\end{proof}
In particular, observe that the number $\mathrm{in}^A_\Kk$ does not depend on the braided groupoid structure of $\Kk$, but only on its number of vertices (or, equivalently, on its degree as a complete quiver).
\begin{example}
    In the case $A = \Z/3\Z$, one has $|\Aut(A)|= 2$. The two connected components $\Kk_0$ and $\Kk_1$ from Example \ref{ex:Z3} have indeed $\mathrm{in}^A_{\Kk_0} = 1$, $\mathrm{in}^A_{\Kk_1}= 3$; see Example \ref{ex:Z3_initials}.
\end{example}
\begin{convention}
    Since the numbers $\mathrm{in}^A_\Kk$ do not depend on $\Kk$, but only on its number of vertices $s$, we use the notation $\mathrm{in}^A_s:= \mathrm{in}^A_{\Kk}$.
\end{convention}
\begin{remark}
    As a double-check, let us retrieve the relation $|\Ss_A| = |\Aut(A)|^{|A|}$ from \eqref{eq:relation_NAs} and Proposition \ref{prop:in^A_K}. The number of objects of $\Ss_A$ is clearly given by $\sum_{s\mid \, |A|} (sN^A_s+\mathrm{in}^A_sN^A_s)$, where the first term is the number of unital vertices in all the components with $s$ vertices, and the second term accounts for the number of initial vertices. Now, using Proposition \ref{prop:in^A_K}, this is rewritten as $\sum_{s\mid\, |A|}N^A_s(s + s(|\Aut(A)|-1)) = \sum_{s\mid\, |A|}sN^A_s(|\Aut(A)|)$, which by \eqref{eq:relation_NAs} equals \[|\Aut(A)|^{|A|-1}\cdot |\Aut(A)| = |\Aut(A)|^{|A|},\] as desired. 
\end{remark}
Let $A$ be a finite group, and $S$ be a vertex in $\Ss_A$. For each automorphism $f\in \Aut(A)$, we can count how many pairs of the form $(a,f)$ lie in $S$: we denote this number by $n^S_f$. The set $\set{n^S_f\mid f\in \Aut(A)}$ can be arranged in decreasing order, forming a numerical partition of $|A|$ that we call $\mathrm{part}(S)$. 

For instance, let $A =\Z/4\Z$ and consider $S_7 = \set{(0,\id), (1, -\id), (2, -\id), (3, -\id)}$ in $\Ss_A$ as in Example \ref{ex:Z4}. There is one occurrence of $\id$, and there are three occurrences of $-\id$, thus $\mathrm{part}(S_7) = (3,1)$ which is a partition of $4 = |A|$.
\begin{proposition}\label{prop:permutations}
    The partition $\mathrm{part}(S)$ depends exclusively on the connected component of $\Ss_A$ that contains $S$.
\end{proposition}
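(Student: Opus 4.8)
The plan is to prove that the quantity $\mathrm{part}(S)$ is unchanged by passing along a single arrow of the quiver $\Ss_A$, and then to conclude by connectivity. Recall that the arrows out of a vertex $S$ are the moves $S\mapsto (a,f^S_a)^{-1}S$, one for each $a\in A$; two regular subsets lie in the same connected component precisely when one is obtained from the other by a finite chain of such moves and their reverses. Since the assertion $\mathrm{part}(S') = \mathrm{part}(S)$ is symmetric in $S$ and $S'$, it suffices to establish it whenever $S' = (a,f)^{-1}S$ with $(a,f) := (a,f^S_a)\in S$; this then propagates along any undirected path in the component.

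First I would write out the elements of $S'$ explicitly. Using $(a,f)^{-1} = (f^{-1}(a^{-1}),f^{-1})$ together with the semidirect-product multiplication $(x,\phi)(y,\psi) = (x\cdot \phi(y),\,\phi\psi)$ in $\Hol(A)$, a generic element $(b,f^S_b)\in S$ is carried to
\[ (a,f)^{-1}(b,f^S_b) = \big(f^{-1}(a^{-1}b),\; f^{-1}f^S_b\big). \]
As $b$ ranges over $A$, the first coordinate $c := f^{-1}(a^{-1}b)$ ranges bijectively over $A$ (which re-confirms that $S'$ is regular), with inverse substitution $b = a\cdot f(c)$. Reading off the automorphism component gives the formula $f^{S'}_c = f^{-1}f^S_{a\cdot f(c)}$.

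The key step is then a counting identity. For a fixed $g\in\Aut(A)$,
\[ n^{S'}_g = \big|\{c\in A : f^{-1}f^S_{a\cdot f(c)} = g\}\big| = \big|\{c\in A : f^S_{a\cdot f(c)} = fg\}\big|. \]
Since $c\mapsto a\cdot f(c)$ is a bijection of $A$, the substitution $b = a\cdot f(c)$ yields $n^{S'}_g = n^S_{fg}$. Thus left multiplication by $f$ is a permutation of $\Aut(A)$ carrying the family of counts $\{n^{S'}_g\}_{g\in\Aut(A)}$ onto the family $\{n^S_h\}_{h\in\Aut(A)}$; the two multisets of values therefore coincide, and so do their decreasing rearrangements, which are exactly $\mathrm{part}(S')$ and $\mathrm{part}(S)$. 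This completes the argument.

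The only real care needed is bookkeeping: getting the inverse and multiplication conventions in $\Hol(A) = A\rtimes\Aut(A)$ right, so that the automorphism component transforms by the left translation $h\mapsto fh$ (rather than, say, by conjugation), and verifying that $b = a\cdot f(c)$ is genuinely a bijection of $A$. No deeper obstacle arises: the invariance of the multiset of counts under a relabelling of $\Aut(A)$ is precisely what makes $\mathrm{part}$ descend to a well-defined invariant of the connected component.
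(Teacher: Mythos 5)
Your proof is correct and takes essentially the same route as the paper's: the heart of both arguments is the one-step computation that passing from $S$ to $(a,f)^{-1}S$ transforms the automorphism components by the left translation $g\mapsto f^{-1}g$ of $\Aut(A)$, which permutes the multiset of counts $\set{n^S_g}_{g\in\Aut(A)}$ and hence preserves $\mathrm{part}(S)$. The only organizational difference is that you propagate along undirected paths by invoking the symmetry of the relation $\mathrm{part}(S)=\mathrm{part}(S')$, whereas the paper first settles the unital vertices and then handles an initial vertex $R$ separately via $R=(a,f)S'$, repeating the same translation argument.
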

\begin{proof}
    Let $\tilde{\Kk}$ be the connected component of $\Ss_A$ containing $S$. When we obtain a vertex $S'$ in $\tilde{\Kk}$ as $S' = (a,f)^{-1}S$ for some $(a,f)\in S$, the second component of each pair $(a,f)^{-1}(b,g) \in  S'$ is given by $f^{-1}g$. Notice that $f^{-1}g = f^{-1}g'$ if and only if $g = g'$, whence $\mathrm{part}(S) = \mathrm{part}(S')$. This proves the claim for all unital vertices.

    If now $R$ is an initial vertex in $\tilde{\Kk}$, there is a unital vertex $S'$ in $ \tilde{\Kk}$ such that $S' = (a,f)^{-1}R$ for some $(a,f)\in R$. Thus $R = (a,f)S'$, and the above argument can be repeated \emph{verbatim}.
\end{proof}
\begin{remark}
    If $S$ and $S'$ lie in the same connected component, then $\mathrm{part}(S) = \mathrm{part}(S')$ by Proposition \ref{prop:permutations}, but \emph{the converse is false}. Indeed, in Example \ref{ex:Z4}, $\mathrm{part}(S_1) = \mathrm{part}(S_2) = (2,2)$, but $S_1$ and $S_2$ lie in distinct components.
\end{remark}

\section{Skew bracoids as dynamical skew braces}\label{sec:aredbraces}
\noindent We now construct an isomorphism between every skew bracoid, and a suitable zero-symmetric dynamical skew brace, thus proving that all braided groupoids originate from dynamical skew braces up to isomorphism. 

Skew bracoids have a suggestive geometric meaning, as discrete ``tangent bundles'' where the groupoid $\Gg$ plays the role of a manifold, and the group structure $(\Gg(\lambda,\Lambda)\cdot_\lambda)$ is the ``tangent space'' at $\lambda$; this is more deeply motivated in \cite[\S6]{sheng2024postgroupoidsquivertheoreticalsolutionsyangbaxter}. In this framework, our Matsumoto--Shimizu labellings can be seen as a ``parallelisation'' of $\Gg$ in a way that is consistent with the groupoid structure.

\subsection{On the Matsumoto--Shimizu functor}We begin with an obvious remark. The Matsumoto--Shimizu functor $\Quiv$ is fully faithful strong monoidal, and hence\footnote{This implication requires the Axiom of Choice.} its corestriction to the essential image $\im(\Quiv)$ is a monoidal equivalence between $\im(\Quiv)$ and $\mathsf{Set}_\Lambda$. In particular, there is an inverse functor $\mathcal{R}\colon \im(\Quiv)\to \mathsf{Set}_\Lambda$ such that $\mathcal{Q}\circ \mathcal{R} \cong \id_{\mathsf{Set}_\Lambda}$ and $\mathcal{R}\circ \mathcal{Q}\cong \id_{\im(\Quiv)}$. 

\begin{definition}
	Let $Q$ be a quiver over $\Lambda$, and suppose that $A$ is a set in bijection with $Q(\lambda,\Lambda)$ for all $\lambda\in\Lambda$. A \emph{labelling} on $Q$ with values in $A$ is a family of bijections $\set{\varphi_\lambda\colon Q(\lambda,\Lambda)\to A}_{\lambda\in\Lambda}$.
\end{definition}

If $Q$ is a quiver in $\im(\Quiv)$, i.e., if there is a set $A$ such that $Q(\lambda,\Lambda)$ is in bijection with $A$  for all $\lambda\in\Lambda$, then defining a labelling on $Q$ with values in $A$ is the same as finding a dynamical set $(A,\phi)$ such that $\Quiv(A,\phi)\cong Q$.

If we want the functor $\mathcal{R}$ to be monoidal, we can choose the labellings in a way that is compatible with tensor products. If $Q$ is labelled by a dynamical set $(A,\phi)$, and $R$ is labelled by $(B,\psi)$, then $Q\ot R$ is naturally labelled by $(A,\phi)\ot (B,\psi)$, by identifying $[\lambda\Vert a]\ot [\phi(\lambda, a)\Vert b]$ with $[\lambda\Vert (a,b)]$.

Many quiver-theoretic concepts in $\im(\Quiv)$ can be read in $\mathsf{Set}_\Lambda$ through labellings. The following is a list of immediate examples.
\subsubsection{} The oidification of semigroups is called \textit{semicategory} \cite{DominionOfIsbell}. A small semicategory is a quiver $Q$ with an associative binary operation $\bullet\colon Q\ot Q\to Q$, and it is clear that in general this need not be contained in $\im(\Quiv)$. However, if $\bullet$ is a semicategory structure on $Q = \Quiv(A,\phi)$ for some dynamical set $(A,\phi)$ over $\Lambda$, then we can read it as a family $\set{\bullet_\lambda}_{\lambda\in\Lambda}$ of binary operations on $A$, through the assignment  $[\lambda\Vert a]\bullet [\phi(\lambda, a)\Vert b] = [\lambda\Vert a\bullet_\lambda b]$. The associativity of $\bullet$ translates as the dynamical associativity \eqref{eq:dyn-associativity}.
\subsubsection{} The oidification of a monoid is a (small) \textit{category}. It is easy to construct examples of categories that do not lie in $\im(\Quiv)$. However, if $\Cc = \Quiv(A,\phi)$ is a category over $\Obj(\Cc) = \Lambda$ that lies in $\im(\Quiv)$, and has binary composition $\bullet\colon \Cc\ot \Cc\to \Cc$ and family of units $\set{\mathbf{1}_\lambda}_{\lambda \in \Lambda}$, then $\bullet $ can be read on $A$ as a family of binary operations $\set{\bullet_{\lambda}}_{\lambda\in\Lambda}$ satisfying \eqref{eq:dyn-associativity}, and $\set{\mathbf{1}_\lambda}_{\lambda\in\Lambda}$ induces a family $\set{1_\lambda}_{\lambda\in\Lambda}\subseteq A$ through the assignment $\mathbf{1}_\lambda = [\lambda\Vert 1_\lambda]$, with the properties $a\bullet_{\lambda} 1_{\phi(\lambda, a)} = a$ and $1_\lambda\bullet_\lambda a = a$ for all $a\in A$, $\lambda\in\Lambda$. 
\subsubsection{} Connected left unital associative semiloopoids always lie in $\im(\Quiv)$: this follows from the left-multiplication maps being bijective. Let $\bullet$ be the semiloopoid operation on $\Gg = \Quiv(A,\phi)$ and let $\set{\bullet_{\lambda} }_{\lambda\in\Lambda}$ be retrieved as above. This family of operations satisfies \eqref{eq:dyn-associativity}. Moreover, since every left multiplication in $\Gg$ is bijective, it follows that every $\bullet_\lambda$ is a left quasigroup structure on $A$.
\subsubsection{} Groupoids are the oidification of groups, and connected groupoids always lie in $\im(\Quiv)$. A groupoid is a left unital associative semiloopoid over $\Lambda = \Lambda_{\mathrm{un}}$, thus a groupoid operation $\bullet$ corresponds with a family of left and right quasigroup operations $\bullet_\lambda$ satisfying \eqref{eq:dyn-associativity}. Since a groupoid is also a category, the family of units $\mathbf{1}_\lambda$ correspond to a family of elements $1_\lambda \in A$ satisfying $a\bullet_{\lambda} 1_{\phi(\lambda, a)} = a$ and $1_\lambda\bullet_\lambda a = a$ for all $a\in A$, as above. One clearly has $1_\lambda = a\backslash_\lambda a$ for all $a\in A$.
\subsubsection{} A skew bracoid $\Gg = \Quiv(A,\phi)$ has two interlaced structures: a groupoid structure $(\Gg,\bullet)$ and a family of group structures $(\Gg(\lambda,\Lambda), \cdot_\lambda)$. The groupoid o\-pe\-ra\-tion corresponds to a family $\set{\bullet_\lambda}_{\lambda\in\Lambda}$ as above. The group operations $\cdot_\lambda$ correspond to a family of group operations $\tilde{\cdot}_\lambda$ on $A$. We could write down the axioms for this family of group operations, and define ``by hand'' an algebraic structure in $\mathsf{Set}_\Lambda$ that is equivalent to skew bracoids by construction. 

But we may actually wish for something more. We may hope that all the $\cdot_\lambda$'s induce the same group structure on $A$. Unfortunately, this cannot be guaranteed in general, as the following counterexample shows.
\begin{counterexample}\label{counterex:neednotcoincide}
Consider the skew bracoid $\Kk_{2,3}$ from Example \ref{ex:Z4}. We choose a different labelling, and relabel the edges as follows:
 $$\begin{tikzcd}
	S_2\ar[loop, out=-150, in=150, looseness=3,"0"description]\ar[loop, out=-140, in=140, looseness=6, "3"description]\ar[rr, bend left=10, "2"description]\ar[rr, bend left=30, "1"description] && S_3\ar[loop, out=-30, in=30, looseness=3, "0"description]\ar[loop, out=-40, in=40, looseness=6, "1"description]\ar[ll, bend left=10, "2"description]\ar[ll, bend left=30, "3"description]
\end{tikzcd} \quad \to\quad \begin{tikzcd}
S_2\ar[loop, out=-150, in=150, looseness=3,"0"description]\ar[loop, out=-140, in=140, looseness=6, "\mathbf{2}"description]\ar[rr, bend left=10, "\mathbf{3}"description]\ar[rr, bend left=30, "1"description] && S_3\ar[loop, out=-30, in=30, looseness=3, "0"description]\ar[loop, out=-40, in=40, looseness=6, "1"description]\ar[ll, bend left=10, "2"description]\ar[ll, bend left=30, "3"description]
\end{tikzcd}$$
Now the induced operations $\bullet_{S_3}$ and $\tilde{\cdot}_{S_3}$ are the same as in Example \ref{ex:Z4}, but the operations $\bullet_{S_2}$ and $\tilde{\cdot}_{S_2}$ have changed by applying the permutation $(2\, 3)$ to the tables of the corresponding operations from Example \ref{ex:Z4}. In particular, $\tilde{\cdot}_{S_3}$ is the sum in $\Z/4\Z$, but $\tilde{\cdot}_{S_2}$ is not any more.
\end{counterexample}
As we see from the above counterexample, it is easy to start from a dynamical skew brace with its usual labelling, and ``mess up'' with the labelling so that the induced operations do not coincide any more.

For the rest of the paper, our aim will be to perform the converse: namely, to start from a skew bracoid, and to ``sort'' the labelling so that the induced operations coincide.
\subsection{Matsumoto--Shimizu labellings}

Let $\Gg$ be a skew bracoid on a homogeneous quiver of weight $n$. Every set $\Gg(\lambda,\Lambda)$ has cardinality $n$, and hence can be put in bijection with a chosen set $A$ of cardinality $n$. Let $\varphi$ be a labelling on $\Gg$ with values in $A$. Each operation $\cdot_\lambda$ on $\Gg(\lambda,\Lambda)$ induces a group structure $\tilde{\cdot}_\lambda$ on $A$, as above. We begin with a crucial remark.
\begin{remark}As we have seen in Counterexample \ref{counterex:neednotcoincide}, the operations $\tilde{\cdot}_\lambda$ need not coincide. But \emph{if they do}---i.e., if all the $\cdot_\lambda$'s induce the same group operation $\cdot$ on $A$---then the compatibility \eqref{qtbc} between $\set{\cdot_\lambda}_{\lambda\in\Lambda}$ and $\set{\bullet_\lambda}_{\lambda\in\Lambda}$ implies the compatibility \eqref{eq:brace-compatibility} between $\cdot$ and $\set{\bullet_\lambda}_{\lambda\in\Lambda}$; and hence $\Gg$ is in fact (isomorphic to) a dynamical skew brace.\end{remark}

Thus the following definition is very natural, and the subsequent lemma is straightforward.
\begin{definition}\label{def:ms_labellings}
    Let $\Gg$ be a skew bracoid over $\Lambda$. A \emph{Matsumoto--Shimizu labelling} on $\Gg$ is a labelling $\set{\varphi_\lambda\colon \Gg(\lambda,\Lambda)\to A}_{\lambda\in \Lambda}$, with values in a set $A$, such that the binary operation $a\cdot b := \varphi_\lambda (\varphi_\lambda^{-1}(a)\cdot_\lambda \varphi_\lambda^{-1}(b)) $ on $A$ induced by $\cdot_\lambda$ is independent of $\lambda\in\Lambda$.
\end{definition}
\begin{lemma}\label{lemma:mslabelling-iff-dbrace}
Given a skew bracoid $\Gg$, the following data are equivalent:
\begin{enumerate}
\item a Matsumoto--Shimizu labelling $\varphi$ on $\Gg$;
\item an isomorphism of skew bracoids between $(\Gg,\set{\cdot_\lambda}_{\lambda\in\Lambda},\bullet)$ and the skew bracoid associated with some zero-symmetric dynamical skew brace.
\end{enumerate}
In other words, $\Gg$ has a Matsumoto--Shimizu labelling if and only if $\Gg$ is ``morally'' a dynamical skew brace.
\end{lemma}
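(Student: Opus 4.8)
The plan is to show that a Matsumoto--Shimizu labelling is exactly the extra datum that lets one reconstruct a zero-symmetric dynamical skew brace from the skew bracoid $\Gg$, and that conversely any such isomorphism restricts to a labelling of this type. Both directions are bookkeeping on the definitions, so I will only set up the constructions and point to the axioms that must be matched.

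For (\textit{i}) $\implies$ (\textit{ii}): given a Matsumoto--Shimizu labelling $\varphi = \set{\varphi_\lambda}$ with values in $A$, I would first take $(A,\cdot)$ to be the group whose multiplication is the common operation induced through $\varphi$ as in Definition \ref{def:ms_labellings}, and define the transition map $\phi(\lambda,a) := \target(\varphi_\lambda^{-1}(a))$. I then transport $\bullet$ across the labelling by setting
\[ a \bullet_\lambda b := \varphi_\lambda\big(\varphi_\lambda^{-1}(a)\bullet \varphi_{\phi(\lambda,a)}^{-1}(b)\big). \]
This is well defined because $\bullet$ is a quiver morphism, so $\varphi_\lambda^{-1}(a)$ and $\varphi_{\phi(\lambda,a)}^{-1}(b)$ are composable; and each $\bullet_\lambda$ is a left quasigroup operation because $\Gg$ is a groupoid, so $\varphi_\lambda^{-1}(a)\bullet\blank$ is a bijection $\Gg(\phi(\lambda,a),\Lambda)\to\Gg(\lambda,\Lambda)$. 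Reading the associativity of $\bullet$ through $\varphi$ gives the dynamical associativity \eqref{eq:dyn-associativity}, and reading the bracoid compatibility \eqref{qtbc} through $\varphi$ gives the brace compatibility \eqref{eq:brace-compatibility}, the point being that all the $\cdot_\lambda$ collapse to the single operation $\cdot$. Hence $(A,\Lambda,\phi,\cdot,\set{\bullet_\lambda})$ is a dynamical skew brace; it is zero-symmetric because $\mathbf{1}_\lambda$ is the $\cdot_\lambda$-unit by \eqref{qtbc}, so $\varphi_\lambda(\mathbf{1}_\lambda)=1_A$, whence $\phi(\lambda,1_A)=\lambda$ and $1_A\bullet_\lambda a=a$. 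Finally the family $\set{\varphi_\lambda}$, viewed as the arrow map $x\mapsto[\source(x)\Vert \varphi_{\source(x)}(x)]$ together with $\id_\Lambda$ on vertices, is by construction a bijective quiver morphism intertwining $\bullet$ and each $\cdot_\lambda$ with the structure maps of $\Quiv(A,\phi)$, i.e.\ an isomorphism of skew bracoids onto the one furnished by Addendum \ref{zusatz}.

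For (\textit{ii}) $\implies$ (\textit{i}): suppose $\Psi=(\Psi^1,\Psi^0)\colon \Gg\to Q$ is an isomorphism of skew bracoids, where $Q=\Quiv(A,\phi)$ arises from a zero-symmetric dynamical skew brace $(A,\cdot,\set{\bullet_\lambda})$. The quiver $Q$ carries its tautological labelling $\psi_\mu\colon Q(\mu,\Lambda)\to A$, $[\mu\Vert a]\mapsto a$, which by Addendum \ref{zusatz} is a group isomorphism $(Q(\mu,\Lambda),\cdot_\mu)\to(A,\cdot)$ for every $\mu$. I would then pull this back by setting $\varphi_\lambda := \psi_{\Psi^0(\lambda)}\circ\Psi^1|_{\Gg(\lambda,\Lambda)}$. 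Each $\varphi_\lambda$ is a bijection, and since $\Psi$ restricts to a group isomorphism $(\Gg(\lambda,\Lambda),\cdot_\lambda)\to(Q(\Psi^0(\lambda),\Lambda),\cdot_{\Psi^0(\lambda)})$, the composite $\varphi_\lambda$ is a group isomorphism onto $(A,\cdot)$. Thus the operation on $A$ induced by $\cdot_\lambda$ through $\varphi_\lambda$ is $\cdot$ itself, independently of $\lambda$, so $\varphi$ is a Matsumoto--Shimizu labelling.

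The only step demanding genuine care is the verification in (\textit{i}) $\implies$ (\textit{ii}) that \eqref{eq:dyn-associativity} and \eqref{eq:brace-compatibility} hold; but these translate termwise into the associativity of $\bullet$ and into \eqref{qtbc} respectively, so once the transport of structure is correctly set up there is no real obstacle, which is why the statement is labelled \emph{straightforward}.
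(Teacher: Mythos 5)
Your proof is correct and takes essentially the same route as the paper's: for (\textit{i})$\implies$(\textit{ii}) the paper likewise transports $\bullet$ across the labelling via the identity $\varphi_\lambda^{-1}(a\bullet_\lambda b) = \varphi_\lambda^{-1}(a)\bullet \varphi^{-1}_{\target(\varphi_\lambda^{-1}(a))}(b)$ and uses the same isomorphism $x\mapsto [\source(x)\Vert \varphi_{\source(x)}(x)]$, and for (\textit{ii})$\implies$(\textit{i}) it likewise pulls back the tautological labelling of $\Quiv(A,\phi)$ along the isomorphism. Your write-up is in fact slightly more detailed than the paper's (which dismisses the verifications as ``clear''), notably in checking zero-symmetry explicitly and in allowing a nontrivial vertex map $\Psi^0$.
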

\begin{proof}Let $\varphi$ be a Matsumoto--Shimizu labelling on $\Gg$, and let $A$ be the set of labels. Then, the operations $\cdot_\lambda$ all induce the same binary operation $\cdot $ on $A$, which is clearly a group operation. In particular all the $\tilde{\cdot}_\lambda$'s have the same unit, namely the element $1_A:= \varphi_\lambda(\mathbf{1}_\lambda)\in A$, which is thereby independent of $\lambda$. The left quasigroup operations $\bullet_\lambda$ are defined by the request \[a\bullet_\lambda b = \varphi_\lambda\left(\varphi^{-1}_\lambda(a)\bullet \varphi^{-1}_{\target\left(\varphi^{-1}_\lambda(a)\right)}(b)\right),\]
	for all $a,b\in A$, $\lambda\in\Lambda$. With these operations, $\Gg$ clearly becomes a dynamical skew brace. The map $x\mapsto  [\source(x)\Vert  \varphi_{\source(x)}(x)]$ provides the desired isomorphism between the quiver-theoretic skew brace structure, and (the quiver-theoretic skew brace associated with) the dynamical skew brace structure.

Conversely, suppose given an isomorphism $f$ between $\Gg$ and a dynamical skew brace. Define a labelling $\varphi$ by $\varphi_\lambda(f^{-1}([\lambda\Vert a])):= a$. This is clearly a Matsumoto--Shimizu labelling. 
\end{proof}
In order to construct Matsumoto--Shimizu labellings, we shall need the following result.
\begin{remark}\label{rem:maximal_le1_subgroupoid}Every groupoid $\Gg$ over $\Lambda$ has at least one maximal subgroupoid $\Gg'$ that is Schurian and contains $\set{\mathbf{1}_\lambda}$ for all $\lambda$. The family of all subgroupoids $\Hh$ satisfying the above properties is a poset with inclusion, and every chain has a supremum given by the union, thus the existence of such a $\Gg'$ is granted by Zorn's Lemma. Notice that $\Gg'$ is wide, and clearly a normal subgroupoid of $\Gg$ in the sense of Paques and Tamusiunas \cite{paques2018galois}. 
\end{remark}
\begin{lemma}\label{lem:maximal_PH_subgroupoid}
    Let $\Gg$ be a groupoid over $\Lambda$, and $\Gg'$ be as in Remark \ref{rem:maximal_le1_subgroupoid}. If $\Gg$ is connected (and hence complete of degree $d$, by Remark \ref{quiver_of_groupoid_is_homogeneous}), then $\Gg'$ is a groupoid of pairs.
\end{lemma}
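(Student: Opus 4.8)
The plan is to argue by contradiction against the maximality of $\Gg'$: I will show that if $\Gg'$ fails to be a groupoid of pairs, then it can be strictly enlarged while preserving the Schurian property, contradicting its maximal choice in Remark \ref{rem:maximal_le1_subgroupoid}.

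First I would record an easy structural reduction. Since $\Gg'$ is a wide Schurian subgroupoid containing every unit, each of its connected components is automatically a groupoid of pairs: in a connected groupoid there is at least one arrow between any two of its vertices (take an undirected path and make it directed using composition and inverses), and the Schurian hypothesis forces this arrow to be unique, so on each component $\Gg'$ is complete of degree $1$. Hence the entire problem collapses to proving that $\Gg'$ is \emph{connected}, i.e.\ that its single component covers all of $\Lambda$ (it is wide).

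Next, suppose toward a contradiction that $\Gg'$ has two distinct components, and pick vertices $\lambda$ in one and $\mu$ in the other. Since $\Gg$ is connected, hence complete of degree $d\ge 1$ by Remark \ref{quiver_of_groupoid_is_homogeneous}, the set $\Gg(\lambda,\mu)$ is nonempty, so I may choose $x\in\Gg(\lambda,\mu)$. I then consider the subgroupoid $\Hh$ of $\Gg$ generated by $\Gg'$ together with $x$. By construction $\Hh$ is wide, contains all units, and strictly contains $\Gg'$, because the arrow $x$ is new (as $\lambda$ and $\mu$ lay in different components of $\Gg'$). To contradict maximality it only remains to verify that $\Hh$ is still Schurian.

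This Schurian check is the heart of the argument and the step I expect to be the main obstacle. Every arrow of $\Hh$ is an alternating composite of $\Gg'$-arrows and copies of $x^{\pm 1}$, and the crucial observation is that $x$ can only be applied at $\lambda$ and $x^{-1}$ only at $\mu$. Consequently any ``round trip'' that crosses into the other component and returns must pass through $\mu$ (resp.\ $\lambda$) at both ends; since each component is already a groupoid of pairs, the intervening $\Gg'$-arrow $\mu\to\mu$ (resp.\ $\lambda\to\lambda$) is the unit, and a composite of the form $x\bullet\mathbf{1}\bullet x^{-1}$ collapses to a unit. Hence every composite telescopes to a single canonical arrow: between two vertices on the same side it reduces to the unique $\Gg'$-arrow, and from a vertex $a$ on the $\lambda$-side to a vertex $b$ on the $\mu$-side it reduces to the composite of the unique $\Gg'$-arrow $a\to\lambda$, then $x$, then the unique $\Gg'$-arrow $\mu\to b$. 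Thus $|\Hh(\nu,\nu')|\le 1$ for all $\nu,\nu'$, so $\Hh$ is Schurian, contradicting the maximality of $\Gg'$. Therefore $\Gg'$ is connected, and by the first step it is a groupoid of pairs.
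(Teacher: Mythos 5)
Your proof is correct and takes essentially the same route as the paper: assume $\Gg'$ is not a groupoid of pairs, pick $x\in\Gg(\lambda,\mu)$ with $\Gg'(\lambda,\mu)=\emptyset$, enlarge $\Gg'$ by adjoining $x$, and contradict maximality by checking the enlargement is still Schurian. The paper realises the enlargement concretely, attaching $x$, $x^{-\bullet}$ and the composites $x\bullet x_{\mu,\nu}$ and $x_{\mu,\nu}^{-\bullet}\bullet x^{-\bullet}$ for $x_{\mu,\nu}\in\Gg'(\mu,\nu)$, whereas your explicit component reduction and the telescoping normal-form analysis of the \emph{generated} subgroupoid carry out the same step in fuller detail (in particular handling closure under pre- and post-composition with $\Gg'$-arrows, which the paper's attached set treats only implicitly).
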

\begin{proof} Let $\Gg'$ be as above. We prove that, for all pairs $(\lambda,\mu)\in\Lambda_{\mathrm{un}}\times \Lambda_{\mathrm{un}}$, there exists an arrow in $\Gg'(\lambda,\mu)$. This holds trivially when $|\Lambda| = 1$, thus we assume $|\Lambda|>1$.

    Suppose by contradiction that $\Gg'$ is not a groupoid of pairs. This means that there exist $\lambda\neq \mu\in\Lambda$ such that $\Gg'(\lambda,\mu) =\emptyset$. But $\Gg(\lambda,\mu)$ contains at least one element $x$. Thus we form a new subgroupoid $\Gg''$ containing $\Gg'$, in the following way: we attach to $\Gg'$ the arrows $x$ and $x^{-\bullet}$; and, for all arrows $x_{\mu,\nu}\in \Gg'(\mu,\nu)$, we attach the arrows $x\bullet x_{\mu,\nu}\in \Gg''(\lambda,\nu)$ and $x_{\mu,\nu}^{-\bullet}\bullet x^{-\bullet}\in \Gg''(\nu,\lambda)$. The result is again a subgroupoid $\Gg''$ of $\Gg$, which strictly contains $\Gg'$, and is Schurian. This contradicts the maximality of $\Gg'$.
\end{proof}

\subsection{Constructing Matsumoto--Shimizu labellings}\label{sec:constructing} For the rest of this section, let $\Gg$ be a braided groupoid $(\Gg,\bullet)$ with braiding $\sigma$; or, equivalently, a skew bracoid structure on $\Gg$. Moreover, assume that $\Gg$ is connected, and hence complete, of some degree $d$. We now construct a Matsumoto--Shimizu labelling $\varphi$, thus proving that $\Gg$ is isomorphic to a (zero-symmetric) dynamical skew brace.

Since the braiding $\sigma$ is non-degenerate, for all pairs of vertices $\lambda,\mu$ connected by an arrow $x\colon \lambda\to \mu$, there is a bijection $x^{-\bullet}\rightharpoonup\blank\colon \Gg(\lambda,\Lambda)\to \Gg(\mu,\Lambda)$. We proceed as follows:
\begin{enumerate}
\item We fix an initial vertex $\zeta$, and a bijection $\varphi_\zeta\colon \Gg(\zeta,\Lambda)\to A$.
\item For all $\lambda$ such that there is an arrow $x\colon \zeta\to \lambda$, we choose as $\varphi_\lambda$ the unique bijection $\varphi_\lambda\colon \Gg(\lambda,\Lambda)\to A$ that makes the diagram below commute:
$$\begin{tikzcd}
\Gg(\zeta,\Lambda)\ar[rd,"\varphi_\zeta" {below left}]\ar[rr,"x^{-\bullet}\rightharpoonup\blank"]&&\Gg(\lambda,\Lambda)\ar[ld,"\varphi_\lambda"{below right}]\\
& A &
\end{tikzcd}$$
\end{enumerate} Since $\Gg$ is connected, the operation (\textit{ii}) can be carried out for all $\lambda$. The arrows $x\colon \zeta\to \lambda$ that we use in (\textit{ii}) are not unique in general, and if $x$ and $y$ are two arrows $\zeta\to \lambda$ the maps $x^{-\bullet}\rightharpoonup\blank$ and $y^{-\bullet}\rightharpoonup\blank$ are generally distinct\footnote{For instance if $\Gg$ is a braided \emph{group}, i.e.\@ a braided groupoid with a single vertex, then the map $x^{-\bullet}\rightharpoonup\blank$ is independent of $x$ if and only if the action $\rightharpoonup$ is trivial; if and only if the group $\Gg$ is abelian and the braiding $\sigma$ is the canonical flip. This is an easy verification.}; thus for each pair $(\zeta,\lambda)$ we need to make a choice of the arrow $x$ that we want to use in this construction. Lemma \ref{lem:maximal_PH_subgroupoid} comes to our rescue: we can choose a maximal subgroupoid of pairs $\Gg'$ (which is wide by Lemma \ref{lem:maximal_PH_subgroupoid}), and for all pairs $(\zeta, \lambda)$ we choose the unique arrow $x\colon \lambda\to \mu$ that lies in $\Gg'$.

%Moreover, when we defined the labelling $\varphi_\iota$ on an initial vertex $\iota$, the definition depended \emph{a priori} on the choice of a unital vertex $\lambda$. We also need to make sure that this operation, in fact, does not depend on the choice of $\lambda$: this is an immediate computation.\rd{[because the triangles commute; make it explicit]}

\begin{remark}\label{rem:tetrahedron_commutes}Let $\Gg$ be a braided groupoid, and select a maximal subgroupoid of pairs $\Gg'$. Let $\varphi$ be defined by the above construction, after a choice of $\zeta\in\lambda$ and $\varphi_\zeta\colon \Gg(\zeta,\lambda)\to A$, and after the choice of $\Gg'$. Given arrows $x\colon \lambda\to \mu$ and $y\colon \mu\to \nu$ in $\Gg'$, the following diagram commutes:
\[\begin{tikzcd}& \Gg(\mu,\Lambda)\ar[ddd,"\varphi_\mu"]\ar[rd,"y^{-\bullet}\rightharpoonup\blank"]&\\
\Gg(\lambda,\Lambda)\ar[ru,"x^{-\bullet}\rightharpoonup\blank"]\ar[rr,crossing over,"(x\bullet y)^{-\bullet}\rightharpoonup\blank"{fill=white}]\ar[rdd, "\varphi_\lambda"{below left}]& & \Gg(\nu,\Lambda)\ar[ldd, "\varphi_\nu"{below right}]\\
&&\\
&A&
\end{tikzcd}\]
Indeed, the diagram commutes if $(x\bullet y)^{-\bullet}\rightharpoonup u =y^{-\bullet}\rightharpoonup (x^{-\bullet}\rightharpoonup u)$ holds for all $u\in \Gg(\lambda,\Lambda)$, but this is immediate because $\rightharpoonup$ is a left action. Notice that, in the above diagram, $x\bullet y$ also lies in $\Gg'$, because $\Gg'$ is a left subgroupoid.\end{remark}
\begin{theorem}\label{thm:aredbraces}
    The labelling $\varphi$ constructed above is a Matsumoto--Shimizu labelling on $\Gg$. Therefore, every connected braided groupoid is obtained from a zero-symmetric dynamical skew brace.
\end{theorem}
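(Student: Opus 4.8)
The plan is to check that $\varphi$ meets Definition \ref{def:ms_labellings} directly: I must show that the operation $a\cdot b:=\varphi_\lambda(\varphi_\lambda^{-1}(a)\cdot_\lambda\varphi_\lambda^{-1}(b))$ transported to $A$ is the same for every vertex $\lambda$. Granting this, Lemma \ref{lemma:mslabelling-iff-dbrace} turns $\varphi$ into an isomorphism between $\Gg$ and (the skew bracoid of) a zero-symmetric dynamical skew brace, and the equivalence between braided groupoids and skew bracoids recorded after the definition of skew bracoid then yields the closing assertion that every connected braided groupoid arises this way.

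First I would isolate the single structural fact on which the whole argument turns: \emph{for every arrow $a\in\Gg(\mu,\nu)$, the map $a\rightharpoonup\blank\colon\Gg(\nu,\Lambda)\to\Gg(\mu,\Lambda)$ is an isomorphism of groups $(\Gg(\nu,\Lambda),\cdot_\nu)\to(\Gg(\mu,\Lambda),\cdot_\mu)$.} Bijectivity is the left non-degeneracy of the braiding (Theorem \ref{thm:qtsb_yields_braiding}, transported to the skew bracoid via Addendum \ref{zusatz}), while the homomorphism property is exactly Proposition \ref{skew-brace-equivalent-conditions}~(\textit{iii}), namely $a\rightharpoonup(bc)=(a\rightharpoonup b)(a\rightharpoonup c)$, read with $\target(a)=\nu$ and $\source(a)=\mu$. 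Applying this to the unique arrow $x\in\Gg'(\zeta,\lambda)$ selected in the construction, its inverse $x^{-\bullet}\in\Gg'(\lambda,\zeta)$ furnishes a group isomorphism $\psi_\lambda:=(x^{-\bullet}\rightharpoonup\blank)\colon(\Gg(\zeta,\Lambda),\cdot_\zeta)\to(\Gg(\lambda,\Lambda),\cdot_\lambda)$.

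Now I would run the comparison. By construction $\varphi_\lambda\circ\psi_\lambda=\varphi_\zeta$, hence $\varphi_\lambda^{-1}=\psi_\lambda\circ\varphi_\zeta^{-1}$. Fixing the group operation $\cdot$ on $A$ as the pushforward of $\cdot_\zeta$ along $\varphi_\zeta$, a direct substitution using that $\psi_\lambda$ is a homomorphism gives, for all $a,b\in A$,
\[
\varphi_\lambda\bigl(\varphi_\lambda^{-1}(a)\cdot_\lambda\varphi_\lambda^{-1}(b)\bigr)
=\varphi_\lambda\psi_\lambda\bigl(\varphi_\zeta^{-1}(a)\cdot_\zeta\varphi_\zeta^{-1}(b)\bigr)
=\varphi_\zeta\bigl(\varphi_\zeta^{-1}(a)\cdot_\zeta\varphi_\zeta^{-1}(b)\bigr)=a\cdot b,
\]
so the transported operation is independent of $\lambda$ and $\varphi$ is a Matsumoto--Shimizu labelling; the claim then follows from Lemma \ref{lemma:mslabelling-iff-dbrace}.

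The genuinely delicate points are not computational but bookkeeping. One must keep straight \emph{which} group structure sits on the target fibre and which on the source fibre of $a\rightharpoonup\blank$, and one must know that the construction is well posed at all—this is where the choice of a maximal subgroupoid of pairs $\Gg'$ (Lemma \ref{lem:maximal_PH_subgroupoid}) enters, since it singles out a unique arrow $\zeta\to\lambda$ and hence a unique $\varphi_\lambda$. I expect the main obstacle to be precisely the verification that $a\rightharpoonup\blank$ is a group isomorphism for the \emph{intrinsic} operations $\cdot_\lambda$ of the skew bracoid (rather than for some chosen model), which forces one to pass through the braided-groupoid/skew-bracoid dictionary. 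The coherence of Remark \ref{rem:tetrahedron_commutes} guarantees that the $\psi_\lambda$ assemble compatibly along $\Gg'$, but for the bare labelling property only the one-step isomorphism used above is needed.
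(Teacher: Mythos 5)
Your proposal is correct and takes essentially the same approach as the paper: the paper's proof performs inline exactly the computation you organise around the observation that $\psi_\lambda = x^{-\bullet}\rightharpoonup\blank$ is a group isomorphism $(\Gg(\zeta,\Lambda),\cdot_\zeta)\to(\Gg(\lambda,\Lambda),\cdot_\lambda)$, with the homomorphism property supplied by Proposition \ref{skew-brace-equivalent-conditions}~(\textit{iii}) (the paper's step $(\dagger)$, applied to $x$ rather than to $x^{-\bullet}$, which is the same fact). Your use of $\varphi_\lambda\circ\psi_\lambda=\varphi_\zeta$ and the conclusion via Lemma \ref{lemma:mslabelling-iff-dbrace} likewise mirror the paper's argument.
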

\begin{proof} We fix the vertex $\zeta$ and the bijection $\varphi_\zeta$. If $x$ is the unique arrow in $\Gg'(\zeta,\lambda)$, the construction forces $\varphi_\lambda(u) := \varphi_\zeta(x\rightharpoonup u)$, and $\varphi_\lambda^{-1}(a) = x^{-\bullet}\rightharpoonup \varphi_\zeta^{-1}(a)$. One has
\begin{align*}
    a\,\tilde{\cdot}_\lambda \, b &= \varphi_\lambda(\varphi_\lambda^{-1}a\cdot_\lambda \varphi_\lambda^{-1}(b))\\
    &= \varphi_\zeta\bigg( x\rightharpoonup\Big( (x^{-\bullet}\rightharpoonup \varphi_\zeta^{-1}(a))\cdot_\lambda (x^{-\bullet}\rightharpoonup\varphi_\zeta^{-1}(b))   \Big) \bigg)\\
    \overset{(\dagger)}&{=}\varphi_\zeta\bigg( \Big(x\rightharpoonup (x^{-\bullet}\rightharpoonup \varphi_\zeta^{-1}(a))\Big) \cdot_\zeta \Big(x\rightharpoonup (x^{-\bullet}\rightharpoonup\varphi_\zeta^{-1}(b))   \Big) \bigg)\\
    &= \varphi_\zeta(\varphi_\zeta^{-1}(a)\cdot_\zeta\varphi_\zeta^{-1}(b))\\
    &= a\,\tilde{\cdot}_\zeta\, b,
\end{align*}
where the step marked with $(\dagger)$ follows from Proposition \ref{skew-brace-equivalent-conditions} (\textit{iii}).
\end{proof}
Every (braided) groupoid is a union of (braided) connected groupoids, whence the following:
\begin{corollary}
    Every quiver-theoretic skew brace is isomorphic to a disconnected union of dynamical skew braces. 
    
    In particular, this defines an equivalence between the subcategory $\mathsf{connQTSB}^0_\Lambda\subset \mathsf{QTSB}_\Lambda$ of connected skew bracoids, and the subcategory $\mathsf{connDSB}^0_\Lambda\subset \mathsf{DSB}_\Lambda$ of connected zero-symmetric dynamical skew braces. 
    
    More generally, for every weak morphism $f\colon \Gg\to \Hh$ of skew bracoids, there exist suitable Matsumoto--Shimizu labellings on $\Gg$ and $\Hh$, yielding dynamical skew braces $A$ and $B$ respectively, such that $f$ induces a weak morphism between the two canonical skew bracoids associated with $A$ and $B$ respectively.
\end{corollary}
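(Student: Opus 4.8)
The plan is to reduce every assertion to the \emph{connected} case and then bootstrap from Theorem~\ref{thm:aredbraces}. The underlying left unital associative semiloopoid of a quiver-theoretic skew brace splits as a disjoint union of its connected components, and the operations $\bullet$, $\set{\cdot_\lambda}_\lambda$ and the braiding $\sigma$ all restrict to each component; so it suffices to realise one connected quiver-theoretic skew brace as a single dynamical skew brace. The first thing I would check is that, inside a connected component, the unital vertices $\Lambda_{\mathrm{un}}$ already form a connected groupoid. Indeed, an initial vertex $\rho$ has only outgoing arrows, all landing in $\Lambda_{\mathrm{un}}$; given $x\colon\rho\to\mu$ and $y\colon\rho\to\nu$, the quotient $x\backslash y$ is an arrow $\mu\to\nu$, so any two targets of $\rho$ lie in a single component of the groupoid on $\Lambda_{\mathrm{un}}$, and any undirected path through an initial vertex can be short-cut. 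Hence the unital part is connected and therefore complete (Remark~\ref{quiver_of_groupoid_is_homogeneous}), and Theorem~\ref{thm:aredbraces} supplies a Matsumoto--Shimizu labelling $\set{\varphi_\lambda}_{\lambda\in\Lambda_{\mathrm{un}}}$ on it.

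Next I would extend the labelling to the initial vertices. For each $\rho\in\Lambda_{\mathrm{in}}$ I pick any arrow $x\colon\rho\to\mu$ with $\mu$ unital; left non-degeneracy makes $x\rightharpoonup\blank\colon\Gg(\mu,\Lambda)\to\Gg(\rho,\Lambda)$ a bijection, and I set $\varphi_\rho:=\varphi_\mu\circ(x\rightharpoonup\blank)^{-1}$. The crucial point is that $x\rightharpoonup\blank$ is a group \emph{isomorphism} $(\Gg(\mu,\Lambda),\cdot_\mu)\to(\Gg(\rho,\Lambda),\cdot_\rho)$ by Proposition~\ref{skew-brace-equivalent-conditions}~(\textit{iii}); consequently the group operation $\tilde\cdot_\rho$ that $\varphi_\rho$ induces on the label set $A$ equals $\tilde\cdot_\mu$, hence the common operation produced on $\Lambda_{\mathrm{un}}$, \emph{independently} of the auxiliary choices of $x$ and $\mu$. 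Thus the extended family is again a Matsumoto--Shimizu labelling (in the evident semiloopoid analogue of Definition~\ref{def:ms_labellings}), it sends every group unit to a single $1_A$, and the construction of Lemma~\ref{lemma:mslabelling-iff-dbrace}, which never invokes the $\bullet$-inverse, applies unchanged: the map $x\mapsto[\source(x)\Vert\varphi_{\source(x)}(x)]$ identifies the component with a dynamical skew brace. This proves the first assertion, and specialises to the zero-symmetric case when $\Lambda_{\mathrm{in}}=\emptyset$.

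For the equivalence of categories I would exhibit $\Quiv$ (Proposition~\ref{monoidal_functor_DSet_to_Quiv}) as the inverse functor. By Addendum~\ref{zusatz}, $\Quiv$ carries a connected zero-symmetric dynamical skew brace to a connected skew bracoid over $\Lambda$, so it restricts to $\mathsf{connDSB}^0_\Lambda\to\mathsf{connQTSB}^0_\Lambda$. Essential surjectivity is precisely Theorem~\ref{thm:aredbraces}, since the isomorphism $x\mapsto[\source(x)\Vert\varphi_{\source(x)}(x)]$ fixes the vertices and is thus a morphism over $\Lambda$. Full faithfulness is inherited from the full faithfulness of $\Quiv$ on $\mathsf{Set}_\Lambda$: a morphism of dynamical sets over $\Lambda$ underlies a morphism of dynamical skew braces exactly when it respects $\set{\bullet_\lambda}_\lambda$, the units and $\cdot$, and under $\Quiv$ these conditions translate verbatim into those defining a morphism of skew bracoids, so the bijection on hom-sets restricts to the structured morphisms.

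Finally, for a weak morphism $f=(f^1,f^0)\colon\Gg\to\Hh$ of skew bracoids I would label each component of $\Gg$ and of $\Hh$ by the construction above, obtaining (unions of) dynamical skew braces $A$, $B$ and isomorphisms $\iota_\Gg\colon\Gg\to\Quiv(A)$, $\iota_\Hh\colon\Hh\to\Quiv(B)$ over the respective vertex sets; then $\iota_\Hh\circ f\circ\iota_\Gg^{-1}\colon\Quiv(A)\to\Quiv(B)$ is a weak morphism of skew bracoids, and since $f$ intertwines $\bullet$, $\set{\cdot_\lambda}_\lambda$ and the two braidings, the transported family $\set{g_\lambda:=\psi_{f^0(\lambda)}\circ f^1\circ\varphi_\lambda^{-1}}_\lambda$ satisfies, vertex by vertex, the defining identities of a weak morphism of dynamical skew braces. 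I expect the \textbf{main obstacle} to lie here: one must upgrade full faithfulness of $\Quiv$ from fixed-$\Lambda$ morphisms to \emph{weak} morphisms carrying a nontrivial vertex map $f^0$, and verify that the $g_\lambda$ genuinely assemble into a weak morphism of dynamical skew braces rather than of the bare quivers. The delicate point is that $f^0$ may identify vertices and collapse the chosen subgroupoid of pairs, so the arrows used to transport the labelling on $\Gg$ need not map to the arrows chosen on $\Hh$; what dissolves the difficulty is precisely that a dynamical-skew-brace morphism is a \emph{family} $\set{f_\lambda}$, so no coherence among the $g_\lambda$ is demanded, and making this reduction rigorous is the one step requiring real care.
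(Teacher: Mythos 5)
Your proposal is correct, but it takes a genuinely different route from the paper's proof, and the comparison is instructive. On the first assertion, the paper treats only the zero-symmetric components explicitly and leaves the initial vertices implicit; your extension of the Matsumoto--Shimizu labelling to $\Lambda_{\mathrm{in}}$ is a real supplement and is sound: the short-cut $x\backslash y\in\Gg(\mu,\nu)$ is legitimate because $m_\bullet$ is a morphism of quivers, and Proposition \ref{skew-brace-equivalent-conditions} (\textit{iii}) indeed makes $x\rightharpoonup\blank$ a group isomorphism $(\Gg(\mu,\Lambda),\cdot_\mu)\to(\Gg(\rho,\Lambda),\cdot_\rho)$, so the induced operation on $A$ is choice-independent even though $\varphi_\rho$ itself is not. (One small step you should make explicit: that the full subquiver on $\Lambda_{\mathrm{un}}$ is a \emph{groupoid}, as Theorem \ref{thm:aredbraces} requires --- the inverse of $x\in\Gg(\lambda,\mu)$ is $x\backslash\mathbf{1}_\lambda$, using associativity and left cancellation.) On the morphism statement, the two proofs diverge: the paper does \emph{not} label $\Gg$ and $\Hh$ independently, but builds the labelling of $\Hh$ from that of $\Gg$ and $f$, setting $B=(A/\sim)\sqcup Z$ with $\psi_\xi$ defined through $f^1$ and, crucially, completing $f^1(\Gg')$ to the maximal subgroupoid of pairs $\Hh'$ used on $\Hh$ --- this is precisely the coherence issue you flag as the ``main obstacle'', resolved there by choice of $\Hh'$. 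You instead dissolve it by noting that, as literally stated, the corollary only asks $f$ to induce a weak morphism between the canonical skew bracoids, and $\iota_\Hh\circ f\circ\iota_\Gg^{-1}$ is such a weak morphism for \emph{any} labellings, the identities for the fibrewise family $g_\lambda=\psi_{f^0(\lambda)}\circ f^1\circ\varphi_\lambda^{-1}$ holding vertex by vertex with no coherence among labellings. Both readings are valid: your route is more economical, while the paper's compatible construction buys a concrete comparison map $g\colon A\to B$ of label sets that makes the intertwining visible, and it yields the categorical equivalence as the $f^0=\id_\Lambda$ case of one construction; your alternative derivation of the equivalence (full faithfulness of $\Quiv$ restricted to structured morphisms, plus essential surjectivity from Theorem \ref{thm:aredbraces} via Lemma \ref{lemma:mslabelling-iff-dbrace}) is a clean substitute.
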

\begin{proof}
    The construction that interprets a dynamical skew brace as a quiver-theoretic skew brace is clearly functorial. Conversely, let $f= (f^1, f^0)\colon \Gg\to \Hh$ be a morphism of connected skew bracoids, over $\Lambda$ and $\Mu$ respectively. We prove that the construction of the Matsumoto--Shimizu labellings can be done in a way that commutes with the map $f$.
    
    Choose a vertex $\zeta \in \Lambda$, and a bijection $\varphi_\zeta\colon \Gg(\zeta,\Lambda)\to A$. Let $\xi := f^0(\zeta)\in \Mu$. We divide the set $\Hh(\xi,\Mu)$ in two pieces: the image $f^1(\Gg(\zeta,\Lambda))$ of $\Gg(\zeta,\Lambda)$, and the remainder $Z:= \Hh(\xi,\Mu)\smallsetminus\im(f^1)$. Define $B:=  (A/\sim) \sqcup Z $, where $\sim$ is the equivalence relation 
    \[ a\sim b \iff f^1(\varphi_\zeta(a)) = f^1(\varphi_\zeta(b)),\]
    let $[a]$ be the class of $a$ modulo $\sim$, and define \[\psi_{\xi}\colon \Hh(\xi,\Mu)\to B,\quad   \psi_{\xi}(x):= \begin{cases}
       [\varphi_\zeta((f^1)^{-1}(x))]&\text{ if }x\in \im f^1 \\
       x&\text{ if } x\in Z.
    \end{cases}\]
    If $g\colon A\to B$ is the map with image $A/\sim$ that sends $a$ to $[a]$, it is clear by construction that the following square commutes:
    \[\begin{tikzcd}
        \Gg(\zeta,\Lambda)\ar[r,"\varphi_\zeta"]\ar[d,"f^1"]& A\ar[d, "g"]\\
        \Hh(\xi,\Mu)\ar[r,"\psi_{\xi}"]& B
    \end{tikzcd}\]
    Choose, as above, a maximal subgroupoid of pairs $\Gg'$ of $\Gg$. Clearly, $f^1(\Gg')$ is a subgroupoid of pairs of $\Hh$, and thus can be completed to a maximal subgroupoid of pairs $\Hh'$. we can thereby perform the constructions of the two Matsumoto--Shimizu labellings, in $\Gg$ with respect to $\zeta$, $A$, $\varphi_\zeta$, and $\Gg'$; and in $\Hh$ with respect to $\xi$, $B$, $\psi_\xi$, and $\Hh'$. It is clear that $f$ intertwines the resulting dynamical skew brace structures. If $f$ is moreover a morphism over $\Lambda$, this property is preserved, and hence $f$ induces a morphism in $\mathsf{DSB}_\Lambda$. It is clear that this construction preserves identity morphisms, and compositions. Therefore, $\mathsf{connQTSB}^0_\Lambda$ and $\mathsf{connDSB}^0_\Lambda$ are equivalent. Notice that different choices of the initial data, in the construction of a Matsumoto--Shimizu labelling, yield generally distinct (albeit isomorphic) dynamical skew braces, thus we cannot expect this equivalence of categories to be an isomorphism.
\end{proof}
\subsection{The case of groupoids of pairs} The construction of \S\ref{sec:constructing} depends heavily on the choice of the maximal subgroupoid of pairs $\Gg '$. However, when $\Gg$ is already a groupoid of pairs, one has $\Gg ' = \Gg$ and the ambiguity is removed. In this case, $A$ is in bijection with $\Lambda$, hence we can just assume $A = \Lambda$. As usual, we denote by $[a,b]$ the unique arrow $a\to b$, and by $[a\Vert b]$ the unique arrow in $\Gg(a,\Lambda)$ labelled by $b$.
\begin{remark}\label{rem:construction_for_PHG}
The construction of \S\ref{sec:constructing}, in case $\Gg$ is a groupoid of pairs, depends only on two data: the vertex $\zeta$, from which the construction begins; and the bijection $\varphi_\zeta$. If we choose a different bijection $\varphi_\zeta'$, the two resulting labellings only differ by a permutation of the set of labels, so this latter choice is not very important. In particular, we can always choose $\varphi_\zeta$ so that the neutral element for $\cdot_\zeta$ is $\zeta$.

Although there are many ways to endow $\Gg$ with a Matsumoto--Shimizu labelling, the following is a standard way, that always works. Choose a vertex $\zeta$, and define $\varphi_\zeta([\zeta, a]):= a$ for all $a\in \Lambda$; that is, $[\zeta\Vert a]:= [\zeta, a]$. Since $[\zeta\Vert \zeta] = [\zeta,\zeta]$ is the unit of $\cdot_\zeta$, the group operation $\cdot := \tilde{\cdot}_\zeta$ on $\Lambda$ has unit $\zeta$. For a general arrow $[a,b]$ in $\Gg$, we define $\varphi_a([a,b]):= a^{-1}b$; that is, $[a\Vert a^{-1}b] = [a, b]$. 

We check that this is indeed a Matsumoto--Shimizu labelling, and that the associated family $\set{\bullet_\lambda}_{\lambda\in\Lambda}$ represents indeed the groupoid structure given in Remark \ref{rem:indeed_are_groupoids}. It is clear that $\varphi_a([a,a]) = a^{-1}a = \zeta$. Observe that $\varphi^{-1}_a(\beta) = \target(\beta)$ does not depend on $a$. Moreover, one has $a\bullet_\lambda (a^{-1}b) = aa^{-1}b = b$, whence $\target([a\Vert a^{-1}b]) = b$, and \[  [a,b]\bullet [b,c]  = [a\Vert a^{-1}b]\bullet [b\Vert b^{-1}c] = [a\Vert a^{-1}c], \] thus $[a\Vert a^{-1}b]$ represents indeed $[a,b]$, and the associated family $\set{\bullet_\lambda}_{\lambda\in\Lambda}$ represents the groupoid multiplication $\bullet$.
%\noindent((\textit{ii}))\quad Conversely, let $(\Lambda, \Lambda, \phi,\cdot, \set{\bullet_\lambda}_{\lambda\in\Lambda})$ be a dynamical skew brace, with neutral element $\zeta\in\Lambda$, and with $\Gg$ as its associated groupoid. The (quiver-theoretic skew brace associated with the) dynamical skew brace $\Gg$ has a canonical labelling $\varphi_\lambda\colon \Gg(\lambda,\Lambda)\to \Lambda$, $[\lambda\Vert a]\mapsto a$, which is clearly Matsumoto--Shimizu. We now want to check that, if we perform the construction ((\textit{i})) starting from the canonical Matsumoto--Shimizu labelling $\varphi$, we just retrieve the dynamical skew brace structure $(\Lambda, \Lambda, \phi,\cdot, \set{\bullet_\lambda}_{\lambda\in\Lambda})$.
%
%Let $\zeta = 1_\Lambda$ be the unit of the group $(\Lambda,\cdot)$. From \eqref{eq:cdotlambda_as_cdot} one has that $[\zeta\Vert \zeta]$ is the neutral element for $\cdot_\zeta$. Recall from Theorem \ref{groupoid_structure_on_Q(A)} that  $[\lambda\Vert 1_\Lambda]$ is a loop on $\lambda$ for all $\lambda in\Lambda$: in particular, $[\zeta\Vert \zeta] = [\zeta\Vert 1_\Lambda] = [\zeta,\zeta]$ holds.
\end{remark}
The special Matsumoto--Shimizu labelling defined in Remark \ref{rem:construction_for_PHG} induces a group operation $\cdot$ on $\Lambda$. On the other hand, the braiding $\sigma$ on $\Gg$ is identified by the ternary operation $\langle \blank,\blank,\blank\rangle$ on $\Lambda$ defined in \eqref{eq:sigma_ternary}, and this by Proposition \ref{prop:braiding-iff-ternary-iff-group} induces a group structure $\cdot'$ on $\Lambda$,
by setting \[  a\cdot' b := \varphi \langle \varphi^{-1}(a),\varphi^{-1}(0), \varphi^{-1}(b)\rangle.\] 
\begin{proposition}\label{prop:cdot_prime_equals_cdot}
    One has $\cdot' = \cdot$.
\end{proposition}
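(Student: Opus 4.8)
The plan is to unfold the two group operations $\cdot$ and $\cdot'$ explicitly in terms of the heap ternary operation $\langle\blank,\blank,\blank\rangle$ and observe that they agree. Both are group operations on $\Lambda$ with the same unit $\zeta = 0$: for $\cdot$ this is recorded in Remark \ref{rem:construction_for_PHG}, and for $\cdot'$ it is the defining feature of the pointed-heap-to-group correspondence of Proposition \ref{prop:braiding-iff-ternary-iff-group}. Under the identification $A = \Lambda$ with $\varphi_\zeta\colon [\zeta,x]\mapsto x$, the map $\varphi$ appearing in the definition of $\cdot'$ is the identity on vertices, so that $a\cdot' b = \langle a,\zeta, b\rangle$.

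The key step is to translate $\cdot$ into heap language. First I would feed the composable path $[\zeta,b]\ot [b,c]$ into the braiding. On the one hand, \eqref{eq:sigma_ternary} gives $\sigma([\zeta,b]\ot[b,c]) = [\zeta,\langle\zeta,b,c\rangle]\ot[\langle\zeta,b,c\rangle,c]$, so the left component is $[\zeta,b]\rightharpoonup[b,c] = [\zeta,\langle\zeta,b,c\rangle]$. On the other hand, by Theorem \ref{thm:qtsb_yields_braiding} the same component equals $[\zeta,b]^{-1}\cdot_\zeta([\zeta,b]\bullet[b,c]) = [\zeta,b]^{-1}\cdot_\zeta[\zeta,c]$, the inverse being taken in $(\Gg(\zeta,\Lambda),\cdot_\zeta)$. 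Applying the group isomorphism $\varphi_\zeta$ yields the identity
\[ \langle\zeta,b,c\rangle = b^{-1}\cdot c \qquad\text{for all } b,c\in\Lambda, \]
where $b^{-1}$ denotes the $\cdot$-inverse. This single formula is the heart of the proof.

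To conclude, I would invoke the group-heap correspondence once more: since $\langle\blank,\blank,\blank\rangle$ is the heap attached to the group $(\Lambda,\cdot')$ based at $\zeta$, one has $\langle a,b,c\rangle = a\cdot' b^{-1'}\cdot' c$, where $(\blank)^{-1'}$ denotes the inverse in $(\Lambda,\cdot')$. Comparing with the displayed identity gives $b^{-1}\cdot c = \zeta\cdot' b^{-1'}\cdot' c = b^{-1'}\cdot' c$; specialising $c=\zeta$ (the common unit) shows $b^{-1} = b^{-1'}$, i.e. the two inverse maps coincide. Substituting this back, $b^{-1}\cdot c = b^{-1}\cdot' c$ for all $b,c$, and as $b^{-1}$ ranges over all of $\Lambda$ we obtain $x\cdot c = x\cdot' c$ for every $x,c$, that is $\cdot = \cdot'$.

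The argument is essentially formal, so there is no deep obstacle; the one place that needs care is the translation step producing $\langle\zeta,b,c\rangle = b^{-1}\cdot c$. Here one must keep straight which group each inverse is taken in, and note that the heap's distinguished slot in this identity is the middle one, whereas $\cdot'$ puts $\zeta$ in the middle of $\langle a,\zeta,b\rangle$; bridging this mismatch is exactly what the final group computation (equivalently, the pure heap identity $\langle\zeta,\langle\zeta,a,\zeta\rangle,b\rangle = \langle a,\zeta,b\rangle$) accomplishes.
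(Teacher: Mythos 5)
Your proof is correct, and it takes a genuinely different (though closely related) route from the paper's. Both arguments rest on the same mechanism: evaluate the braiding on a well-chosen composable pair and equate the heap description \eqref{eq:sigma_ternary} of its first component with the skew-bracoid formula $x\rightharpoonup y = x^{-1}\cdot_{\source(x)}(x\bullet y)$. You apply it to the path $[\zeta,b]\ot[b,c]$ based at $\zeta$, extracting the identity $\langle \zeta,b,c\rangle = b^{-1}\cdot c$; the paper instead applies it to $[\phi(\zeta,a),\zeta]\ot[\zeta,\phi(\zeta,b)]$, which has $\zeta$ already in the \emph{middle} slot, and reads off $\langle a,\zeta,b\rangle = a\cdot b$ in one stroke, since the middle vertex is the target of $[\zeta\Vert a]^{-\bullet}\rightharpoonup[\zeta\Vert b] = [\zeta\Vert a]^{-\bullet}\bullet [\zeta\Vert a\cdot b]$ (via Remark \ref{rem:bullet-as-cdot}); with that, $\cdot' = \cdot$ is immediate from the definition of $\cdot'$, with no heap manipulations. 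Your choice buys something: you work entirely inside the source star $\Gg(\zeta,\Lambda)$, where $\varphi_\zeta$ is by construction a group isomorphism onto $(\Lambda,\cdot)$, and you never need the groupoid inverse $[\zeta\Vert a]^{-\bullet}$ nor the rewriting the paper marks $(\ddagger)$. The price is the extra bookkeeping at the end: you invoke the bijectivity of the pointed-heap/group correspondence (implicit in Proposition \ref{prop:braiding-iff-ternary-iff-group} and standard since Baer) to write $\langle a,b,c\rangle = a\cdot' b^{-1'}\cdot' c$, then compare the two inversions by specialising $c = \zeta$; this step is carried out correctly, since both operations have unit $\zeta$ (for $\cdot$ this is arranged in Remark \ref{rem:construction_for_PHG}), so $b^{-1} = b^{-1'}$ and surjectivity of inversion yields $\cdot = \cdot'$. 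In short: same key computation, different test path, one extra standard lemma---the paper's version is shorter, yours avoids the inverse-arrow gymnastics on the groupoid side.
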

\begin{proof} Recall that the ternary operation defined in \eqref{eq:sigma_ternary} describes the ``middle vertex'' of $\sigma([a,b]\ot [b,c])$ as a function of $a,b,c$. The ``middle vertex'' of $\sigma([\phi(\zeta, a), \zeta]\ot [\zeta, \phi(\zeta, b)])$ is the target of $[\phi(\zeta,a), \zeta]\rightharpoonup [\zeta, \phi(\zeta, b)]$. Observe that
    \begin{align*}
        [\phi(\zeta,a), \zeta]\rightharpoonup [\zeta, \phi(\zeta, b)]&= [\zeta\Vert a]^{-\bullet}\rightharpoonup [\zeta\Vert b]\\
        \overset{(\ddagger)}&{=} [\zeta\Vert a]^{-\bullet}\bullet [\zeta\Vert a\cdot b],
    \end{align*}
    where the equality marked with $(\ddagger)$ follows from Remark \ref{rem:bullet-as-cdot}. Thus, the target of $[\zeta\Vert a]^{-\bullet}\bullet [\zeta\Vert a\cdot b]$ is $\phi(\zeta, a\cdot b)$, whence $\langle \phi(\zeta, a),\zeta, \phi(\zeta, b)\rangle = \phi(\zeta, a\cdot b)$. One thereby has
    \begin{align*}
         a\cdot' b &= \varphi \langle \varphi^{-1}(a),\varphi^{-1}(0), \varphi^{-1}(b)\rangle\\
         &= \varphi \langle \phi(\zeta, a),\zeta, \phi(\zeta, b)\rangle\\
         &= \varphi(\phi(\zeta, a\cdot b))\\
         &= a\cdot b
    \end{align*}
    as desired.\end{proof}
\begin{example}
    As in Example \ref{ex:Z4}, we consider the subquiver $\Kk_{4,5,6,7}$ of the maximal zero-symmetric dynamical skew brace of $A = \Z/4\Z$. This is a braided groupoid of pairs, and hence corresponds to a ternary operation on $\Obj(\Kk_{4,5,6,7})$. For brevity, we set $a:=S_4, b:= S_5, c := S_6, d:= S_7$. The ternary operation $\langle\blank,\blank,\blank\rangle$ is given by:
\begin{align*}
     &\langle a,b,a\rangle =d &&\langle a,d,a\rangle =   b  & &\langle a,d,b\rangle = c&&\langle a,c,b\rangle =  d& &\langle a,c,d\rangle =b &&\langle a,b,d\rangle = c\\
     &\langle b,a,b\rangle =c &&\langle b,c,b\rangle =a  &&\langle b,c,a\rangle = d&&\langle b,d,a\rangle =  c&&\langle b,d,c\rangle =a &&\langle b, a, c\rangle = d\\
     & \langle c,d,c\rangle = b&&\langle c,b,c\rangle = d&&\langle c,d,b\rangle =a &&\langle c,a,b\rangle = d && \langle c,b,d\rangle = a&&\langle c,a,d\rangle =b  \\
     &\langle d,c,d\rangle = a &&\langle d,a,d\rangle = c &&\langle d,b,a\rangle =c &&\langle d,c,a\rangle = b &&\langle d,b,c\rangle =a &&\langle d,a,c\rangle = b
 \end{align*}
plus the \emph{Mal'tsev conditions}:
$$\langle i,i,j\rangle = j,\quad \langle i,j,j\rangle = i \quad\text{for all }i,j.$$
We search for an abelian group operation $\cdot$ on $\Obj(\Kk_{4,5,6,7})$ such that the ternary operation has the form $\langle a,b,c\rangle = a b^{-1} c $. The group $(\Obj(\Kk_{4,5,6,7}),\cdot)$ must be isomorphic to $\Z/4\Z$, as it is easily verified. The following is an exhaustive list of suitable group structures, where each group structure is defined by imposing an isomorphism $(\Obj(\Kk_{4,5,6,7}),\cdot)\to (\Z/4\Z,+)$:
\begin{align*}
&a \mapsto 0,\quad b\mapsto 1,\quad c\mapsto 2,\quad d\mapsto 3; &&
&a \mapsto 0,\quad b\mapsto 3,\quad c\mapsto 2,\quad d\mapsto 1;\\
&b \mapsto 0,\quad a\mapsto 1, \quad c\mapsto 3,\quad d\mapsto 2; &&
&b \mapsto 0,\quad a\mapsto 3, \quad c\mapsto 1,\quad d\mapsto 2;\\
&c \mapsto 0,\quad a\mapsto 2,\quad b\mapsto 1, \quad d\mapsto 3;&&
&c \mapsto 0,\quad a\mapsto 2,\quad b\mapsto 3, \quad d\mapsto 1;\\
&d \mapsto 0,\quad a\mapsto 1, \quad b\mapsto 2,\quad c\mapsto 3; &&
&d \mapsto 0,\quad a\mapsto 3, \quad b\mapsto 2,\quad c\mapsto 1.\\
\end{align*}
Among these, the labellings $\varphi$ that satisfy the requirements of Proposition \ref{prop:cdot_prime_equals_cdot} are only one for each choice of the vertex $\zeta$ mapping to $0$:
\begin{align*}
&a \mapsto 0,\quad b\mapsto 3,\quad c\mapsto 2,\quad d\mapsto 1;\\
&b \mapsto 0,\quad a\mapsto 1, \quad c\mapsto 3,\quad d\mapsto 2;\\
&c \mapsto 0,\quad a\mapsto 2,\quad b\mapsto 1, \quad d\mapsto 3;\\
&d \mapsto 0,\quad a\mapsto 1, \quad b\mapsto 2,\quad c\mapsto 3.\\
\end{align*}
\end{example}
\bigskip

\noindent\textbf{Acknowledgements.} The author is grateful to Alessandro Ardizzoni for his advice, for his careful reading of some sections of this work, and for pointing out Remark \ref{rem:slice} and other insightful remarks; to Youichi Shibukawa for his remarks and suggestions during the early phase of this research, and for his collaboration in proving Remark \ref{rem:a-iff-a1a2} and Proposition \ref{prop:braiding-iff-ternary-iff-group}; and to an anonymous Referee for suggesting \S\ref{sec:post} and other significant improvements to this paper. This research was supported by the University of Turin through a PNRR DM 118 scholarship; by the project
OZR3762 of Vrije Universiteit Brussel; through the
\textsc{FWO} Senior Research Project G004124N; and by the European Union -- NextGenerationEU under \textsc{{NRRP}}, Mission 4 Component 2 CUP D53D23005960006 -- Call \textsc{{PRIN}} 2022 No.\@ 104 of February 2, 2022 of Italian Ministry of University and Research; Project 2022S97PMY \textit{Structures for Quivers, Algebras and Representations} (\textit{{SQUARE}}).
\begin{CJK}{UTF8}{gbsn}
\bibliographystyle{abbrv}
\bibliography{refs}

\begin{thebibliography}{10}

\bibitem{AguiarAndruskiewitsch}
M.~Aguiar and N.~Andruskiewitsch.
\newblock Representations of matched pairs of groupoids and applications to
  weak {H}opf algebras.
\newblock In {\em Algebraic structures and their representations}, volume 376
  of {\em Contemp. Math.}, pages 127--173. Amer. Math. Soc., Providence, RI,
  2005.

\bibitem{quasigroupoidsALONSOALVAREZ}
J.~N. Alonso~\'Alvarez, J.~M. Fern\'andez~Vilaboa, and
  R.~Gonz\'alez~Rodr\'iguez.
\newblock Quasigroupoids and weak {H}opf quasigroups.
\newblock {\em J. Algebra}, 568:408--436, 2021.

\bibitem{anai2021thesis}
S.~Anai.
\newblock {Dynamical Yang--Baxter Mapの観点からのSkew Braceの一般化
  (\textit{Generalisation of Skew Braces from the viewpoint of dynamical
  Yang--Baxter maps}). Master's thesis}.
\newblock {Hokkaido University, Sapporo, Japan}, 2021.

\bibitem{andruskiewitsch2005quiver}
N.~Andruskiewitsch.
\newblock On the quiver-theoretical quantum {Y}ang--{B}axter equation.
\newblock {\em Selecta Math. (N.S.)}, 11(2):203--246, 2005.
\newblock With an appendix by Mitsuhiro Takeuchi.

\bibitem{ashikaga2022dynamical}
R.~Ashikaga and Y.~Shibukawa.
\newblock {Dynamical reflection maps}.
\newblock {\em arXiv preprint arXiv:2209.10079}, 2022.

\bibitem{baer1929einfuhrung}
R.~Baer.
\newblock Zur {E}inf\"uhrung des {S}charbegriffs.
\newblock {\em J. Reine Angew. Math.}, 160:199--207, 1929.

\bibitem{postgroups}
C.~Bai, L.~Guo, Y.~Sheng, and R.~Tang.
\newblock Post-groups, ({L}ie-){B}utcher groups and the {Y}ang--{B}axter
  equation.
\newblock {\em Math. Ann.}, 388(3):3127--3167, 2024.

\bibitem{BAXTER1973}
R.~Baxter.
\newblock Eight-vertex model in lattice statistics and one-dimensional
  anisotropic {H}eisenberg chain. {II}. {E}quivalence to a generalized ice-type
  lattice model.
\newblock {\em Ann. Phys.}, 76(1):25--47, 1973.

\bibitem{BournGranJacqmin}
D.~Bourn, M.~Gran, and P.-A. Jacqmin.
\newblock {On the naturalness of Mal’tsev categories}.
\newblock {\em Joachim Lambek: The Interplay of Mathematics, Logic, and
  Linguistics}, pages 59--104, 2021.

\bibitem{breaz2024heaps}
S.~Breaz, T.~Brzezi{\'n}ski, B.~Rybo{\l}owicz, and P.~Saracco.
\newblock Heaps of modules and affine spaces.
\newblock {\em Ann. Mat. Pura Appl.}, 203(1):403--445, 2024.

\bibitem{BRZEZINSKITrussesParagons}
T.~Brzezi\'nski.
\newblock Trusses: paragons, ideals and modules.
\newblock {\em J. Pure Appl. Algebra}, 224(6):106258, 39, 2020.

\bibitem{chouraquiMbraces}
F.~Chouraqui.
\newblock A note on {G}arside monoids and {$\mathscr{M}$}-braces.
\newblock {\em Semigroup Forum}, 106(3):553--568, 2023.

\bibitem{chouraquiPartial}
F.~Chouraqui.
\newblock The {Y}ang-{B}axter equation and {T}hompson's group {$F$}.
\newblock {\em Internat. J. Algebra Comput.}, 33(3):547--584, 2023.

\bibitem{colbourn2006handbook}
C.~J. Colbourn and J.~H. Dinitz.
\newblock {\em Handbook of Combinatorial Designs. Second Edition}.
\newblock CRC Press, 2006.

\bibitem{dehornoy2015foundations}
P.~Dehornoy, F.~Digne, E.~Godelle, D.~Krammer, and J.~Michel.
\newblock {\em Foundations of {G}arside theory}, volume~22 of {\em EMS Tracts
  in Mathematics}.
\newblock European Mathematical Society (EMS), Z\"urich, 2015.
\newblock Author name on title page: Daan Kramer.

\bibitem{drinfeld2006some}
V.~G. Drinfeld.
\newblock On some unsolved problems in quantum group theory.
\newblock In {\em Quantum groups ({L}eningrad, 1990)}, volume 1510 of {\em
  Lecture Notes in Math.}, pages 1--8. Springer, Berlin, 1992.

\bibitem{etingof1999set}
P.~Etingof, T.~Schedler, and A.~Soloviev.
\newblock Set-theoretical solutions to the quantum {Y}ang--{B}axter equation.
\newblock {\em Duke Math. J.}, 100(2):169--209, 1999.

\bibitem{etingof2001lectures}
P.~Etingof and O.~Schiffmann.
\newblock Lectures on the dynamical {Y}ang--{B}axter equations.
\newblock In {\em Quantum groups and {L}ie theory ({D}urham, 1999)}, volume 290
  of {\em London Math. Soc. Lecture Note Ser.}, pages 89--129. Cambridge Univ.
  Press, Cambridge, 2001.

\bibitem{felder1995conformal}
G.~Felder.
\newblock Conformal field theory and integrable systems associated to elliptic
  curves.
\newblock In {\em Proceedings of the {I}nternational {C}ongress of
  {M}athematicians, {V}ol.\ 1, 2 ({Z}\"urich, 1994)}, pages 1247--1255.
  Birkh\"auser, Basel, 1995.

\bibitem{ferrishibu}
D.~Ferri and Y.~Shibukawa.
\newblock Structure groupoids of quiver-theoretic yang-baxter maps.
\newblock {\em arXiv preprint arXiv:2503.10327}, 2025.

\bibitem{GAP4}
The GAP~Group.
\newblock {\em {GAP -- Groups, Algorithms, and Programming, Version 4.13.0}},
  2024.

\bibitem{gervais1984novel}
J.-L. Gervais and A.~Neveu.
\newblock Novel triangle relation and absence of tachyons in {L}iouville string
  field theory.
\newblock {\em Nuclear Phys. B}, 238(1):125--141, 1984.

\bibitem{grabowski2016loopoids}
J.~Grabowski.
\newblock An introduction to loopoids.
\newblock {\em Comment. Math. Univ. Carolin.}, 57(4):515--526, 2016.

\bibitem{guarnieri2017skew}
L.~Guarnieri and L.~Vendramin.
\newblock Skew braces and the {Y}ang--{B}axter equation.
\newblock {\em Math. Comp.}, 86(307):2519--2534, 2017.

\bibitem{hollings2017wagner}
C.~D. Hollings and M.~V. Lawson.
\newblock {\em Wagner's theory of generalised heaps}.
\newblock Springer, Cham, 2017.

\bibitem{introGroupoids}
A.~Ibort and M.~A. Rodríguez.
\newblock {\em An introduction to groups, groupoids and their representations}.
\newblock CRC Press, Boca Raton, FL, 2020.

\bibitem{jonsson2017poloids}
D.~Jonsson.
\newblock Poloids from the points of view of partial transformations and
  category theory.
\newblock {\em arXiv preprint arXiv:1710.04634}, 2017.

\bibitem{LYZ}
J.-H. Lu, M.~Yan, and Y.-C. Zhu.
\newblock On the set-theoretical {Y}ang--{B}axter equation.
\newblock {\em Duke Math. J.}, 104(1):1--18, 2000.

\bibitem{mackenzie}
K.~C.~H. Mackenzie.
\newblock Double {L}ie algebroids and second-order geometry. {I}.
\newblock {\em Adv. Math.}, 94(2):180--239, 1992.

\bibitem{Majid_braidedgroups}
S.~Majid.
\newblock Braided groups and algebraic quantum field theories.
\newblock {\em Lett. Math. Phys.}, 22(3):167--175, 1991.

\bibitem{MJbraided}
S.~Majid.
\newblock Braided groups.
\newblock {\em J. Pure Appl. Algebra}, 86(2):187--221, 1993.

\bibitem{martin2024skew}
I.~Martin-Lyons and P.~J. Truman.
\newblock Skew bracoids.
\newblock {\em J. Algebra}, 638:751--787, 2024.

\bibitem{matsumoto2011combinatorialaspects}
D.~K. Matsumoto.
\newblock Combinatorial aspects of dynamical {Y}ang--{B}axter maps and
  dynamical braces.
\newblock {\em arXiv preprint arXiv:1105.5752}, 2011.

\bibitem{matsu}
D.~K. Matsumoto.
\newblock Dynamical braces and dynamical {Y}ang--{B}axter maps.
\newblock {\em J. Pure Appl. Algebra}, 217(2):195--206, 2013.

\bibitem{matsumotoshimizu}
D.~K. Matsumoto and K.~Shimizu.
\newblock Quiver-theoretical approach to dynamical {Y}ang--{B}axter maps.
\newblock {\em J. Algebra}, 507:47--80, 2018.

\bibitem{DominionOfIsbell}
B.~Mitchell.
\newblock The dominion of {I}sbell.
\newblock {\em Trans. Amer. Math. Soc.}, 167:319--331, 1972.

\bibitem{orchard2020unifying}
D.~Orchard, P.~Wadler, and H.~Eades, III.
\newblock Unifying graded and parameterised monads.
\newblock In {\em Proceedings {E}ighth {W}orkshop on {M}athematically
  {S}tructured {F}unctional {P}rogramming}, volume 317 of {\em Electron. Proc.
  Theor. Comput. Sci. (EPTCS)}, pages 18--38. EPTCS, [place of publication not
  identified], 2020.

\bibitem{paques2018galois}
A.~Paques and T.~Tamusiunas.
\newblock The {G}alois correspondence theorem for groupoid actions.
\newblock {\em J. Algebra}, 509:105--123, 2018.

\bibitem{prufer1924theorie}
H.~Pr\"ufer.
\newblock Theorie der {A}belschen {G}ruppen.
\newblock {\em Math. Z.}, 22(1):222--249, 1925.

\bibitem{riehl-context}
E.~Riehl.
\newblock {\em Category theory in context}.
\newblock Aurora Dover Modern Math Originals. Dover Publications, Inc.,
  Mineola, NY, 2016.

\bibitem{rump2007braces}
W.~Rump.
\newblock Braces, radical rings, and the quantum {Y}ang--{B}axter equation.
\newblock {\em J. Algebra}, 307(1):153--170, 2007.

\bibitem{sheng2024postgroupoidsquivertheoreticalsolutionsyangbaxter}
Y.~Sheng, R.~Tang, and C.~Zhu.
\newblock Post-groupoids and quiver-theoretical solutions of the
  {Y}ang--{B}axter equation.
\newblock {\em arXiv preprint arXiv:2410.04973}, 2024.

\bibitem{shibu}
Y.~Shibukawa.
\newblock Dynamical {Y}ang--{B}axter maps.
\newblock {\em Int. Math. Res. Not.}, (36):2199--2221, 2005.

\bibitem{shibukawa2007invariance}
Y.~Shibukawa.
\newblock Dynamical {Y}ang--{B}axter maps with an invariance condition.
\newblock {\em Publ. Res. Inst. Math. Sci.}, 43(4):1157--1182, 2007.

\bibitem{stanovsky2004left}
D.~Stanovsk{\'y}.
\newblock {\em Left distributive left quasigroups}.
\newblock PhD thesis, PhD Thesis, Charles University in Prague, 2004.

\bibitem{takeuchi1981matched}
M.~Takeuchi.
\newblock Matched pairs of groups and bismash products of {H}opf algebras.
\newblock {\em Comm. Alg.}, 9(8):841--882, 1981.

\bibitem{VanOystaeyenQuivers}
F.~van Oystaeyen and P.~Zhang.
\newblock Quiver {H}opf algebras.
\newblock {\em J. Algebra}, 280(2):577--589, 2004.

\bibitem{vendramin2023skew}
L.~Vendramin.
\newblock Skew braces: a brief survey.
\newblock In {\em Geometric methods in physics XL, workshop, Bia{\l}owie\.za,
  Poland, June 20--25, 2023}, pages 153--175. Cham: Birkh{\"a}user, 2024.

\bibitem{wagner1953}
V.~V. Wagner.
\newblock Теория обобщенных груд и обобщенных
  групп.
\newblock {\em Математический сборник}, 32(3):545--632,
  1953.

\bibitem{yang}
C.~N. Yang.
\newblock Some exact results for the many-body problem in one dimension with
  repulsive delta-function interaction.
\newblock {\em Phys. Rev. Lett.}, 19:1312--1315, 1967.

\end{thebibliography}
\end{CJK}
\end{document}